\tikzstyle{new}=[circle,  minimum width=4pt,inner sep=0pt, fill=black,draw=black]
\tikzstyle{none}=[circle,fill=white,draw=black]
\tikzstyle{n}=[shape=rectangle,minimum width=1pt,inner sep=0pt, fill=none,draw=none]
\tikzstyle{emph}=[circle,  minimum width=4pt,inner sep=0pt, fill=magenta,draw=magenta]
\tikzset{directed/.style={decoration={
  markings,
  mark=at position .66 with {\arrow{>}}},postaction={decorate}}}
\tikzset{directed2/.style={decoration={
  markings,
  mark=at position .6875 with {\arrow{>}},
  mark=at position .7125 with {\arrow{>}}},postaction={decorate}}}
\tikzset{directed3/.style={decoration={
  markings,
  mark=at position .7 with {\arrow{>}}},postaction={decorate}}}
\def\cleardoublepage{
  \clearpage
  \if@twoside\ifodd\c@page\else
  \hbox{}
  \thispagestyle{empty}
  \newpage
  \if@twocolumn\hbox{}\newpage\fi
  \fi\fi
  }
\newtheoremstyle{plainsl}%
	{\topsep}
	{\topsep}
	{\slshape} 
	{}
	{\normalfont\bfseries}
	{.}
	{ }
	{}
\theoremstyle{plainsl}
\newtheorem{theorem}{Theorem}[section]
\newtheorem{lemma}[theorem]{Lemma}
\newtheorem{proposition}[theorem]{Proposition}
\newtheorem{openprob}[theorem]{Open Problem}
\theoremstyle{remark}
\newtheorem{remark}[theorem]{Remark}
\newcommand\cref[1]{Corollary~\ref{cor:#1}}
\numberwithin{equation}{section}
\newcommand\comp[1]{{\mkern2mu\overline{\mkern-2mu#1}}}
\DeclareMathOperator\ev{ev}
\DeclareMathOperator\tr{tr}
\DeclareMathOperator{\boxprod}{\mathbin{\Box}}
\DeclareMathOperator{\lcm}{lcm}
\newcommand\cx{{\mathbb C}}
\newcommand\ints{{\mathbb Z}}
\newcommand\re{{\mathbb R}}
\newcommand\rats{{\mathbb Q}}
\newcommand\cA{{\mathcal A}}
\newcommand\cG{{\mathcal G}}
\newcommand\Ze{{\mathbf e}}
\newcommand\Zv{{\mathbf v}}
\newcommand\ones{{\mathbf 1}}
\newcommand\revarc[1]{{\mkern5mu\overleftarrow{\mkern1mu#1}}}
\newcommand{\slimmod}[1]{\ (\mathrm{mod}\ #1)}
\definecolor{vcolour}{RGB}{230,97,0}
\definecolor{kcolour}{RGB}{93,58,155}
\newcommand\krystalsays[1]{}
\newcommand\vincentsays[1]{}
\newcommand{\arxiv}[1]{\href{https://arxiv.org/abs/#1}{\texttt{arXiv:#1}}}
\title{State transfer in discrete-time quantum walks \\ via projected transition matrices}
\author{Krystal Guo \and Vincent Schmeits }
\date{January 13, 2025}
\begin{document}

\maketitle

\begingroup
    \renewcommand\thefootnote{}
    \footnotetext[0]{Korteweg-de Vries Institute for Mathematics, University of Amsterdam, Amsterdam, The Netherlands.  \href{https://qusoft.org/}{QuSoft} (Research center for quantum software \& technology), Amsterdam, The Netherlands.  (\texttt{\string{k.guo,v.f.schmeits\string}@uva.nl}). }
\endgroup
\begin{abstract}

In this paper, we analyze state transfer in quantum walks by using combinatorial methods. We generalize perfect state transfer in two-reflection discrete-time quantum walks to a notion that we call \textsl{peak state transfer};  we define peak state transfer as the highest state transfer that can be achieved between an initial and a target state under unitary evolution, even when perfect state transfer is unattainable. We give a spectral characterization of peak state transfer that allows us to fully characterize peak state transfer in the arc-reversal (Grover) walk on various families of graphs, including strongly regular graphs and incidence graphs of block designs (assuming that the walk starts at a point of the design). In addition,  we  provide many examples of peak state transfer, including an infinite family where the amount of peak state transfer tends to $1$ as the number of vertices grows. We further demonstrate that peak state transfer properties extend to infinite families of graphs generated by vertex blow-ups, and we characterize periodicity in the vertex-face walk on toroidal grids. In our analysis, we make extensive use of the spectral decomposition of a matrix that is obtained by projecting the transition matrix down onto a subspace. Though we are motivated by a problem in quantum computing, we identify several open problems that are purely combinatorial, arising from the spectral conditions required for peak state transfer in discrete-time quantum walks.

  \noindent\textit{Keywords: algebraic graph theory, graph eigenvalues, combinatorial matrix theory,  block designs, strongly regular graphs, quantum walks}

  \noindent\textit{MSC 2020: Primary 05C50; Secondary 15A18, 05E30,  81P68 } 
\end{abstract}

\tableofcontents

\section{Introduction}\label{sec:intro}

In this paper, we analyze discrete-time quantum walks using combinatorial methods. Although this problem was originally motivated by quantum algorithms, it has since evolved into a recognized topic in algebraic graph theory; for example, a recent survey on eigenvalues of Cayley graphs \cite{LiuZho2022} and the dynamic survey on distance-regular graphs \cite{vDamKooTan2016} both dedicate sections on perfect state transfer in quantum walks.

A \textsl{discrete-time quantum walk} is represented by a unitary operator that is used to evolve some initial unit-length vector, or \textsl{(quantum) state}, in discrete time steps. Colloquially speaking, given such an initial state, we are interested in how close this unitary evolution can bring it to another given state. We defer the formal definitions until Section \ref{sec:2-reflection-examples} and will first illustrate our motivation with Figure \ref{fig:peak_ST_illustration}, which contains an example of the evolution of a discrete-time quantum walk with some initial state $\phi_1$.

\begin{figure}[htb]
\centering
\begin{subfigure}{0.3\textwidth}
    \centering
    \includegraphics[width=\textwidth]{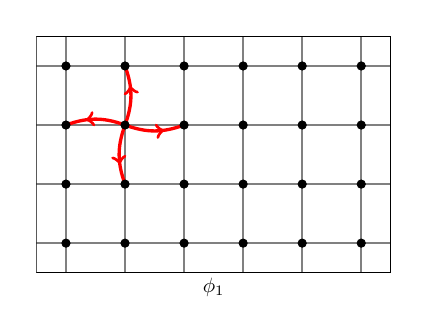}
\end{subfigure}
\begin{subfigure}{0.3\textwidth}
    \centering
    \includegraphics[width=\textwidth]{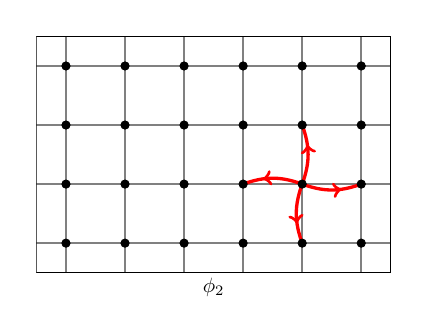}
\end{subfigure}

\begin{subfigure}{0.26\textwidth}
    \centering
    \includegraphics[width=\textwidth]{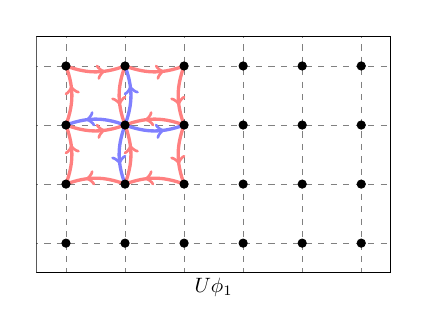}
\end{subfigure}
\kern-1em
\begin{subfigure}{0.26\textwidth}
    \centering
    \includegraphics[width=\textwidth]{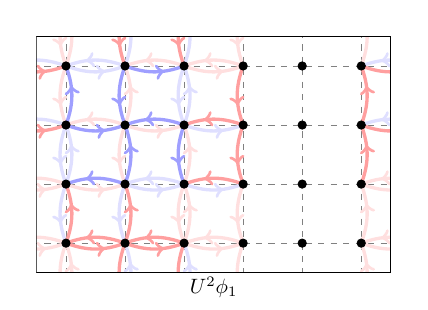}
\end{subfigure}
\kern-1em
\begin{subfigure}{0.26\textwidth}
    \centering
    \includegraphics[width=\textwidth]{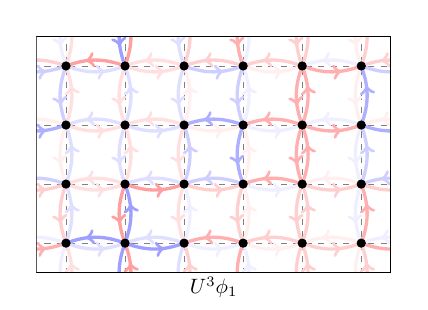}
\end{subfigure}
\kern-1em
\begin{subfigure}{0.26\textwidth}
    \centering
    \includegraphics[width=\textwidth]{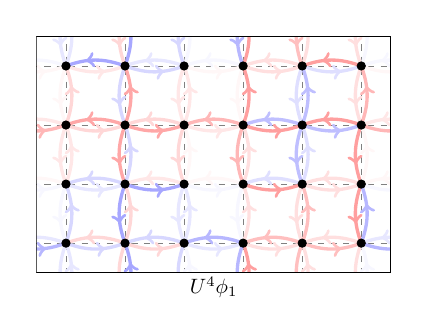}
\end{subfigure}
\caption{The pictures in the top row show two quantum states; each state $\phi_i$ represents a vector indexed by a set of $96$ arcs. Its value is $0$ on every arc, except for the $4$ red-colored arcs, on which it has value $0.5$. The second row depicts the first four steps of a discrete-time quantum walk, where red (resp.\ blue) arcs represent positive (resp.\ negative) entries of the corresponding states. The level of opaqueness of each arc reflects the magnitude of the corresponding entry, making the arc invisible whenever the entry is equal to zero.\label{fig:peak_ST_illustration}}
\end{figure}

Loosely speaking, the main question we wish to ask is, given a starting state $\phi_1$ and a target state $\phi_2$, how close can $U^{t}\phi_1$ come to $\phi_2$? If there is a time $\tau$ such that $U^{\tau}\phi_1$ is equal to $\phi_2$ (or some other scalar multiple thereof), we say that there is \textsl{perfect state transfer} from $\phi_1$ to $\phi_2$. This is a well-studied phenomenon in quantum physics, see for example \cite{KurWoj2011, YalGed2015, SteSko2016}. However, it is understood that this is a rare phenomenon. Thus, even if perfect state transfer does not occur, it makes sense to instead  ask how close the walk can come to the state $\phi_2$. In the example depicted in  Figure \ref{fig:peak_ST_illustration}, the state $U^3\phi_1$ is as close to the state $\phi_2$ as the walk can get, in the sense that $t = 3$ and $\gamma = 1$ minimize the quantity
\begin{equation}\label{eq:intro-peak-quantity}
  m(t,\gamma) \coloneq \|U^t\phi_1 - \gamma\phi_2\| .
\end{equation}
This is an instance of what we call \textsl{peak state transfer}. 
Broadly speaking, $U^{t}\phi_1$ is prevented from reaching $\phi_2$ because $ m(t,\gamma)$ is lower-bounded in terms of spectral properties of $U$; peak state transfer occurs when this lower bound is met with equality.  In Section \ref{sec:peakst}, we will make this definition precise and we will see that perfect state transfer is a special case of peak state transfer, where the lower bound for $m(t,\gamma)$ is equal to $0$.

The graph-theoretic properties behind perfect state transfer in continuous-time quantum walks\footnote{For a \textsl{continuous-time quantum walk}, the transition matrix $U(t)$ (which replaces $U^t$ in the discrete case) depends continuously on $t$: typically, $U(t) = \exp(itA)$ where $A$ is the adjacency matrix of a graph.} have been well-studied, see e.g.\ \cite{CheGod2011,CouGodGuo2015,GodGuoKem2020}.  Perfect state transfer has also been studied in discrete-time walks. There are multiple models of discrete-time quantum walks; here we focus on walks where the unitary operator is a product of two reflections, which we will call \textsl{two-reflection walks}; we defer the formal definition until Section \ref{sec:2reflectionwalks} and note for now that two examples of two-reflection walks are the \textsl{arc-reversal walk} and the \textsl{vertex-face walk}. 
Perfect state transfer was characterized for the arc-reversal walk by Zhan \cite{Zha2019} for regular graphs and  for graphs by Kubota and Yoshino \cite{KubYos2024}. In a previous paper \cite{GuoSch2024}, the authors built the mathematical foundations for studying  perfect state transfer in vertex-face walks. 

If perfect state transfer does not occur, but instead for every $\epsilon > 0$ there exist $t$ and $\gamma$ such that $m(t,\gamma) < \epsilon$, we say that there is \textsl{pretty good state transfer} from $\phi_1$ to $\phi_2$. For discrete-time walks, this notion has been studied by Chan \& Zhan \cite{ChaZha2023}, though it is more well-studied in continuous-time walks, in particular on path graphs \cite{GodKirSev2012,VinZhe2012,BanCouGod2017,CouGuoBom2017,Bom2019,ChaSin2023}. Both pretty good state transfer and peak state transfer are generalizations of perfect state transfer. Colloquially speaking, with pretty good state transfer, the walk comes arbitrarily close to some $\gamma \phi_2$, but there is no good handle on the time that it takes for the walk to reach $\gamma \phi_2$ within a given distance; the characterizations of pretty good state transfer (for both discrete-time and continuous-time) rely on Kronecker's theorem, the proof of which is non-constructive. This in contrast to perfect state transfer, where the times at which it occurs can be deduced from the characterization given by Kubota and Yoshino \cite{KubYos2024}. As is the case with perfect state transfer, if one can show that there is peak state transfer between a pair of vertices, it is easy to determine the times at which it occurs. One could say that peak state transfer is a relaxation of perfect state transfer on the \textsl{amount} of state transfer (in the sense that we do not require $m(t,\gamma)$ to attain the value $0$), whereas pretty good state transfer is a relaxation on the \textsl{time} of state transfer (in the sense that we cannot (easily) deduce the times at which the amount of state transfer is high).

In this paper, we define and give a characterization of peak state transfer (Theorem \ref{thm:PeakSTchar}). We do this by using ideas from the continuous-time case to study a matrix that is obtained from $U$ by projecting it down onto the state space. This matrix, that we call the \textsl{projected transition matrix}, was studied before in the context of perfect state transfer in \cite{KubSeg2022} for arc-reversal walks and separately in \cite{GuoSch2024} for vertex-face walks (the definitions of which are given in Section \ref{sec:prelim-projected}). The characterization of peak state transfer includes a characterization of perfect state transfer and differs from the result in \cite{KubYos2024} only in that a condition called `strong cospectrality' need not be met. It is also formulated more generally for all two-reflection walks and the formulation is time-independent: we characterize whether or not peak state transfer occurs between distinct states, and in the case that it does, we provide expressions for the times at which it occurs. We also characterize when there is `peak state transfer' from a state to itself, which turns out to be equivalent to the walk being periodic at that state. Like in the case of peak state transfer, this characterization is time-independent.

In the remaining sections of this paper, we will use these characterizations as tools in order to classify peak state transfer in the arc-reversal walk on complete graphs, cycles, strongly regular graphs and incidence graphs of block designs (for walks starting in a point of the design). We also show that every instance of peak state transfer in the arc-reversal walk can be extended to an infinite family of graphs that admit peak state transfer, by blowing up the vertices of the graph. Additionally, we provide a family of graphs on which the arc-reversal walk has the property that the minimum of $m(t,\gamma)$ tends to zero as the number of vertices grows, without actually reaching it.

Moreover, we classify periodicity in the vertex-face walks on grids embedded in the torus (answering a question that was left open in \cite{GuoSch2024}) and we give an infinite family of toroidal grids that admit peak state transfer (but no perfect state transfer).

The organization of this paper is as follows. In Section \ref{sec:prelim}, we set up definitions and notation for two-reflection walks, and give the definitions of two commonly  studied two-reflection walks: the arc-reversal walk and the vertex-face walk. In Section \ref{sec:peakst}, we give the characterizations of peak state transfer (Theorem \ref{thm:PeakSTchar}) and periodicity (Theorem \ref{thm:periodicity_characterisation}). In Section \ref{sec:lex_products_arc_reversal}, we classify peak state transfer in the arc-reversal walks on complete graphs and cycles (Lemmas \ref{lem:cycles_peakST} and \ref{lem:complete_graphs_peakST}), and we show how peak state transfer is preserved when we blow up the vertices of a graph (Lemma \ref{lem:lex_product_peakST}). In Section \ref{sec:examples}, we discuss more examples of peak state transfer in arc-reversal walks: in particular, we classify peak state transfer in strongly regular graphs (Theorem \ref{thm:SRGs_peakST}) and in the point-block incidence graphs of block designs (Theorem \ref{thm:block_designs_peakST}). In Section \ref{sec:grids}, we classify periodicity in vertex-face walks on toroidal grids (Theorem \ref{thm:toroidal_periodicity_characterisation}), and we give an infinite family of grids for which the vertex-face walk admits peak state transfer (Theorem \ref{thm:4n_grids_peakST}). In Section \ref{sec:openproblems}, we discuss future directions of research and we describe several purely combinatorial open problems that have arisen from our study of quantum walks.

\section{Preliminaries}\label{sec:prelim} 
In Section \ref{sec:2reflectionwalks}, we will give preliminary definitions regarding two-reflection walks. In a setting which is common to the vertex-face walk and the arc-reversal walk, studying state transfer is equivalent to studying the projected transition matrix, which we define in Section \ref{sec:prelim-projected}. We will give the definitions for the arc-reversal and vertex-face walks in Section \ref{sec:2-reflection-examples}, together with their corresponding projected transition matrices. In Section \ref{sec:prelim-spectral_decomposition} we discuss properties and notation regarding an important mathematical tool in the study of state transfer: the spectral decomposition of a matrix. Finally in Section \ref{sec:prelim-sums_of_cosines} we provide a useful lemma that is a simplified version of a result of Conway \& Jones \cite{ConJon1976}, which we will need in order to work with equations involving cosines of rational multiples of $\pi$.

\subsection{Two-reflection walks}\label{sec:2reflectionwalks}

Let $\cA$ be some finite set. The transition matrix $U \in \cx^{\cA \times \cA}$ of a \textsl{two-reflection walk} is a product of two reflections: it has the form
\[
U = (2P - I)(2Q - I),
\]
where $P$ and $Q$ are orthogonal projections onto some subspaces of $\cx^{\cA}$. In the corresponding \textsl{discrete-time quantum walk}, if $\Zv$ is the initial state, we let the state evolve by repeatedly multiplying it with $U$, so that $U^t\Zv$ is the state of the walk at time $t \in \ints_{\geq 0}$. In this general setting, implementing the transition matrix $U$ is as easy as it is to implement measurements in the subspaces that $P$ and $Q$ project onto (see \cite[Claim 3.16]{JefZur2023}), which makes the two-reflection walk an important model to study. One of the early works in which a two-reflection walk is studied is that of Szegedy \cite{Sze2004}. In their work, a specific two-reflection walk (which Szegedy calls a `bipartite walk') is constructed as follows. Let $X$ and $Y$ be finite sets and suppose that for each $y \in Y$ we have a probability vector $p_y \in [0,1]^X$ and for each $x \in X$ a probability vector $q_x \in [0,1]^Y$. Then define for each $y \in Y$ and $x \in X$ the vectors
\[
\psi_y = \sum_{x\in X} \sqrt{p_y(x)}\, \Ze_{x,y} \quad \text{and} \quad \phi_x = \sum_{y\in Y} \sqrt{\vphantom{p_y}q_x(y)}\, \Ze_{x,y},
\]
where $\Ze_{x,y}$ denotes the standard basis vector corresponding to the element $(x,y) \in X \times Y$. The corresponding walk is now defined by taking $\cA = X \times Y$ and letting $P$ and $Q$ be the orthogonal projectors onto the subspaces of $\cx^{\cA}$ spanned by the sets $\{\psi_y\}_{y\in Y}$ and $\{\phi_x\}_{x\in X}$ respectively. Note that each of these is a set of pairwise orthonormal vectors. If $X = Y$ and $p_x(y) = q_x(y)$ for all $x,y \in X$, then the matrix $\{p_y(x)\}_{x,y \in X}$ is the transition matrix of a discrete-time Markov chain. In other words, this process can be used to turn a Markov chain into a quantum walk \cite{Sze2004}.

In the most general setting of two-reflection walks, we do not construct the sets $\{\psi_y\}_{y\in Y}$ and $\{\phi_x\}_{x\in X}$ from probability vectors, but we let each of them be an arbitrary set of pairwise orthonormal vectors, so that $P$ and $Q$ are arbitrary orthogonal projections in $\cx^{\cA \times \cA}$. Throughout this paper, we let $M \in \cx^{\cA \times Y}$ denote the matrix that has the states $\psi_y$ as its columns, that is, $M\Ze_y = \psi_y$ for all $y \in Y$. Then we can write $P = MM^*$, where $M^*$ denotes the Hermitian transpose of $M$. Similarly, define $N \in \cx^{\cA \times X}$ by $N\Ze_x = \phi_x$ for all $x \in X$, so that $Q = NN^*$. Note also that $M^*M = I_Y$ and $N^*N = I_X$.

We are particularly interested in the scenario where the initial state is $\phi_x$ for some $x\in X$ and $U^t\phi_x$ is close to a scalar multiple of $\phi_{x'}$ for $x' \in X$. Loosely speaking, this can be thought of as transfer of the state from $\phi_x$ to $\phi_{x'}$. In the following section, we will introduce the main tool for studying this process.

\subsection{Projected transition matrices and state transfer}\label{sec:prelim-projected}

We will follow \cite{GuoSch2024} and define the titular matrices of this section.
We  project each power $U^t$ of $U$ down onto the column space of $N$ as follows: for $t \in \ints_{\geq 0}$, define
\[
B_t = N^*U^tN.
\]
The matrix $B_t$ is Hermitian: this follows from the fact that $(2Q - I)N = N$, which can be applied on both sides to find:
\[
B_t = N^* U^t N = N^* (2Q - I)U^{t-1}(2P - I) N = N^* (U^t)^* N = B_t^*.
\]
Specifically, we will call $B := B_1 = N^* UN$ the \textsl{projected transition matrix} of the two-reflection walk. We remark that $B$ can be written in terms of the \textsl{discriminant matrix} $D \coloneq N^*M$ of $U$ (following the nomenclature of \cite{Sze2004}) as follows:
\[
B = N^* (2P - I)N = 2N^*MM^*N - N^*N = 2DD^* - I.
\]
In particular, the eigenvalues of $B$ can be obtained from the singular values of $D$.
In \cite{GuoSch2024} (as well as in \cite{KubSeg2022} for the specific case of the arc-reversal walk) it is shown that, for every $t\geq 1$, the matrix $B_t$ can be written as a polynomial of $B$; more precisely, 
\[
B_t = T_t(B),
\]
where $T_t$ is the $t$-th Chebyshev polynomial of the first kind.

For $u, v \in X$, consider the quantity
\[
m(t,\gamma) := \| U^t \phi_u - \gamma \phi_v \| 
\]
for each time $t \geq 0$ and each unimodular $\gamma \in \cx$. If $u \neq v$ and there exist $t$ and $\gamma$ such that $m(t,\gamma)=0$, then $U^t \phi_u = \gamma \phi_v$ and we say that there is \textsl{perfect state transfer} from $\phi_u$ to $\phi_v$ at time $t$. Equivalently, there is perfect state transfer from $\phi_u$ to $\phi_v$ at time $t$ if $B_t(v,u) = \gamma$ for some $t$ and $\gamma$. (See \cite{GuoSch2024}; Note that $\phi_v^* U^t \phi_u = B_t(v,u)$.)

Even if such $t$ and $\gamma$ do not exist, we can consider the value
\[
m = \inf_{t \in \ints_{> 0}} \inf_{\gamma: |\gamma| = 1} m(t,\gamma)
\]
If $m = 0$ we say that there is \textsl{pretty good state transfer} from $\phi_u$ to $\phi_v$. (Equivalently, since the unit circle is compact, there is pretty good state transfer from $\phi_u$ to $\phi_v$ if there exists a unimodular $\gamma \in \cx$ such that $
\inf_{t \in \ints_{> 0 }} m(t,\gamma) = 0$.) We can also give a lower bound on the value of $m$ and determine whether there exists a time $t$ for which this lower bound can be attained. Using that $\phi_u = N\Ze_u$, we see that
\[
\begin{split}
m(t,\gamma)^2 &= (U^t N \Ze_u - \gamma N \Ze_v)^*(U^t N \Ze_u - \gamma N \Ze_v) \\
&= 2 - \gamma\Ze_u^*N^*(U^t)^*N\Ze_v - \comp{\gamma}\Ze_v^*N^*U^tN\Ze_u \\
&= 2 - 2\Re(\gamma B_t(u,v)),
\end{split}
\]
where the second equality holds because both $U^tN\Ze_u$ and $\gamma N\Ze_v$ have norm $1$ and the third equality holds because $B_t = N^*U^t N$ is Hermitian. This means that lower bounding $m(t,\gamma)$ is equivalent to upper bounding $\Re(\gamma B_t(u,v))$ which is in turn equivalent to upper bounding $|B_t(u,v)|$, because $\Re(\gamma B_t(u,v))$ is maximal precisely when $\gamma B_t(u,v) = |B_t(u,v)|$. In Lemma 3.1, we give an upper bound for $\sup_t|B_t(u,v)|$ and we characterize when it is tight. This will lead us to the definition of peak state transfer.

If $U^t\phi_u = \gamma\phi_u$ for some $t$ and $\gamma$ (or equivalently, if $B_t(u,u) = \gamma$) then the walk is \textsl{periodic at $\phi_u$} and the \textsl{period} is the smallest $t$ for which this occurs. Alternatively, we say that the walk is \textsl{$t$-periodic at $\phi_u$}. If the periodicity happens with the same period $t$ at $\phi_u$ for every $u \in X$, we say that the walk itself is \textsl{$t$-periodic}. In this case, $B_t$ is a diagonal matrix. For example, for the graph in Figure \ref{fig:high_peakST}, the arc-reversal walk (to be defined in Section \ref{sec:2-reflection-examples}) is periodic at vertex $u$, but not at the other vertices. 

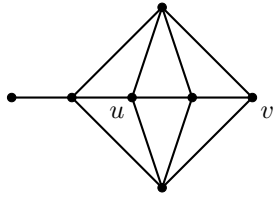
\begin{figure}[htb]
\centering
\begin{tikzpicture}[scale=.8]
	\begin{pgfonlayer}{nodelayer}
		\node [label={[shift={(-.2,-.2)}]center:{$u$}}] (0) at (-0.5, 0) {};
		\node (1) at (0.5, 0) {};
		\node (2) at (-1.5, 0) {};
		\node [label={[shift={(.2,-.2)}]center:{$v$}}] (3) at (1.5, 0) {};
		\node (4) at (-2.5, 0) {};
		\node (5) at (0, 1.5) {};
		\node (6) at (0, -1.5) {};
	\end{pgfonlayer}
	\begin{pgfonlayer}{edgelayer}
		\draw [thick] (4.center) to (2.center);
		\draw [thick] (2.center) to (0.center);
		\draw [thick] (0.center) to (1.center);
		\draw [thick] (1.center) to (3.center);
		\draw [thick] (2.center) to (5.center);
		\draw [thick] (0.center) to (5.center);
		\draw [thick] (1.center) to (5.center);
		\draw [thick] (3.center) to (5.center);
		\draw [thick] (2.center) to (6.center);
		\draw [thick] (0.center) to (6.center);
		\draw [thick] (1.center) to (6.center);
		\draw [thick] (3.center) to (6.center);
	\end{pgfonlayer}
    \filldraw [black]
    (0) circle (2pt)
    (1) circle (2pt)
    (2) circle (2pt)
    (3) circle (2pt)
    (4) circle (2pt)
    (5) circle (2pt)
    (6) circle (2pt);
\end{tikzpicture}
\caption{The arc-reversal walk on this $7$-vertex graph is $12$-periodic at the vertex $u$ an not periodic at any other vertex. For the vertices $u$ and $v$, the value of $m(t,\gamma)$ is lower bounded by $2 - \sqrt{3} \approx 0.267949$, a bound that is reached for $t = 6$ and $\gamma = 1$. It turns out that this is an instance of peak state transfer, the definition of which is given in Section \ref{sec:peakst}.
\label{fig:high_peakST}}
\end{figure}

\begin{remark}
\label{rem:all_examples_have_same_N}
These definitions are set up in such a way that it is natural to consider the projected transition matrix $B$; we are only interested in state transfer between those states that form a basis for the subspace that $Q$ projects onto, and the $(v,u)$-entry of $B_t$ represents the inner product of $\phi_v$ with $U^t\phi_u$. In all of the examples in this paper, the set $\cA$ is the set of arcs of a graph and $X$ is the set its set of vertices. The matrix $N$ is always the so-called `normalized arc-vertex incidence matrix' of that graph, as we define in Section \ref{sec:2-reflection-examples}. For this reason, we will also usually speak of `state transfer from $u$ to $v$' rather than `state transfer from $\phi_{u}$ to $\phi_{v}$', even in the general case of a two-reflection walk where there is no underlying graph.
\end{remark}

Many of the two-reflection walks that are discussed in the literature---including Szegedy's walk, the arc-reversal walk, and vertex-face walk---involve only real-valued matrices. Thus, for simplicity and readability, throughout this text, we will assume that all states $\phi_x$ and $\psi_y$ are real, so that all matrices involved are real as well. We will make some comments about the complex analogues of our statements in Section \ref{sec:openproblems}.

\subsection{Examples of two-reflection walks }\label{sec:2-reflection-examples}

In addition to Szegedy's walk, which was discussed in Section \ref{sec:2reflectionwalks}, various examples of two-reflection walks can be found in \cite{GodZha2019} and \cite{GodZha2023}, two of which we will highlight in this section: the arc-reversal walk and the vertex-face walk. These two walks will constitute essentially all of the examples given in this paper. For both of these walks, the state space is indexed by the arcs of a graph.

The \textsl{arc-reversal walk} (also called \textsl{Grover walk}) is defined as follows. Let $G = (V,E)$ be a graph and let $\cA$ be the set of arcs of $G$; i.e.\ the set of all ordered pairs of vertices $(u,v)$ for which $uv$ is an edge of $E$. If $a \coloneq (u,v)$ is an arc of $G$, we say that $u$ is the \textsl{tail} of $a$ and $v$ is the \textsl{head} of $a$. Now let $N \in \cx^{\cA \times V}$ be the \textsl{normalized arc-vertex incidence matrix} of $G$, that is
\[
N(a,v) = \begin{cases}
\frac{1}{\sqrt{d(v)}} &\text{if $v$ is the tail of $a$}; \\
0 &\text{otherwise,}
\end{cases}
\]
where $d(v)$ denotes the degree of the vertex $v$. The transition matrix for the arc-reversal walk is given by
\[
U = R(2NN^T - I),
\]
where $R$ is the \textsl{arc-reversal matrix} of $G$: if $\revarc{a} = (v,u)$ denotes the arc opposite to arc $a = (u,v)$, then
\[
R(a,b) = \begin{cases}
1 &\text{if $b = \revarc{a}$}; \\
0 &\text{otherwise.}
\end{cases}
\]
Clearly, $R$ is a reflection and we can write $R = (2MM^T - I)$, where $M$ is the \textsl{normalized arc-edge incidence matrix}:
\[
M(a,e) = \begin{cases}
\frac{1}{\sqrt{2}} &\text{if $a = (u,v)$ such that $uv \in E$};\\
0 &\text{otherwise.}
\end{cases}
\]
The projected transition matrix of the arc-reversal walk is the \textsl{normalized adjacency matrix} of $G$ and it is given by
\[
B = N^T R N = D_G^{-1/2} A D_G^{-1/2},
\]
where $A$ and $D_G$ denote the adjacency matrix and degree matrix of $G$ respectively.  In particular, if $G$ is $d$-regular, we have that $B = \frac{1}{d}A$. The normalized adjacency matrix appears in other contexts, such as random walks \cite{Spi2019}. If we subtract it from the identity matrix we obtain the \textsl{normalized Laplacian matrix} of the graph \cite{Chu1997}.

The \textsl{vertex-face walk} is a two-reflection walk takes place on a graph embedding. It was defined in \cite{Zha2021} and a detailed description and further properties are given in \cite{GuoSch2024}. We will give an intuitive description here. In a topological sense, a \textsl{map} is a $2$-cell embedding of a graph in an orientable surface, the latter of which can be viewed as a sphere with a certain number of handles attached to it (this number is the \textsl{genus} of the surface). The embedding being \textsl{$2$-cell} means that the \textsl{faces} of the embedding, which are the disjoint regions obtained upon removal of the embedding from the surface, are homeomorphic to open disks. We can denote the map as a triple $(V,E,F)$ of its vertices, edges and faces respectively. We can assign to every edge of the map a pair of arcs that lie on opposite sides of that edge and that point in opposite directions. Since the embedding is orientable, this can be done in such a way that, after picking an orientation, the arcs lying inside each face form a clockwise walk around the edges of that face. See Figure \ref{fig:K4_torus_facial_walks}.
\begin{figure}
\centering
\vspace{-30pt}
\begin{tikzpicture}[scale=1.5]
\begin{pgfonlayer}{nodelayer}
    \node (0) [label={[shift={(-.3,.1)}]center:{$v$}}]   at (0, 0.87868) {};
    \node (1) at (-0.87868, 0) {};
    \node (2) at (0, -0.87868) {};
    \node (3) at (0.87868, 0) {};
    \node (a) at (-1.5, 1.5) {};
    \node (b) [label={[shift={(-.75,-.75)}]center:{$f_2$}}] at (1.5, 1.5) {};
    \node (c) at (-1.5, -1.5) {};
    \node (d) at (1.5, -1.5) {};
    \node (8) at (0, 2.12132) {};
    \node (9) at (2.12132, 0) {};
    \node (10) at (0, -2.12132) {};
    \node (11) at (-2.12132, 0) {};
    \node (12) at (0, 2.12132) {};
    \node (13) [label=center:{$f_1$}] at (0,0) {};
\end{pgfonlayer}
\begin{pgfonlayer}{edgelayer}
\clip (a) rectangle (d);
    \draw [gray, dashed] (1.center) to (0.center);
    \draw [gray, dashed] (0.center) to (3.center);
    \draw [gray, dashed] (3.center) to (2.center);
    \draw [gray, dashed] (2.center) to (1.center);
    \draw [gray, dashed] (1.center) to (11.center);
    \draw [gray, dashed] (0.center) to (12.center);
    \draw [gray, dashed] (3.center) to (9.center);
    \draw [gray, dashed] (2.center) to (10.center);
    \draw [bend right=22, very thick, style=directed, color=lightgray] (3.center) to (0.center);
    \draw [bend right=22, very thick, style=directed, color=lightgray] (0.center) to (1.center);
    \draw [bend right=22, very thick, style=directed, color=lightgray] (1.center) to (2.center);
    \draw [bend right=22, very thick, style=directed, color=lightgray] (2.center) to (3.center);
    \draw [bend right=22, very thick, style=directed, color=black] (1.center) to (0.center);
    \draw [bend right=22, very thick, style=directed, color=black] (0.center) to (3.center);
    \draw [bend right=22, very thick, style=directed, color=black] (3.center) to (2.center);
    \draw [bend right=22, very thick, style=directed, color=black] (2.center) to (1.center);
    \draw [bend right=22, very thick, style=directed, color=lightgray] (0.center) to (12.center);
    \draw [bend right=22, very thick, style=directed, color=lightgray] (3.center) to (9.center);
    \draw [bend right=22, very thick, style=directed, color=lightgray] (2.center) to (10.center);
    \draw [bend right=22, very thick, style=directed, color=lightgray] (1.center) to (11.center);
    \draw [bend right=22, very thick, style=directed, color=lightgray] (12.center) to (0.center);
    \draw [bend right=22, very thick, style=directed, color=lightgray] (9.center) to (3.center);
    \draw [bend right=22, very thick, style=directed, color=lightgray] (10.center) to (2.center);
    \draw [bend right=22, very thick, style=directed, color=lightgray] (11.center) to (1.center);
    \draw [thin] (a.center) to (b.center);
    \draw [thin] (b.center) to (d.center);
    \draw [thin] (d.center) to (c.center);
    \draw [thin] (c.center) to (a.center);
\end{pgfonlayer}
\filldraw [black]
(0) circle (2pt)
(1) circle (2pt)
(2) circle (2pt)
(3) circle (2pt)
;
\end{tikzpicture}
\vspace{-30pt}
\caption{This embedding of the complete graph $K_4$ in the torus (genus $1$) has two faces $f_1$ and $f_2$. (The pairs of opposite sides of the square are identified to form the torus.) The corresponding clockwise walks in black and gray have lengths $4$ and $8$ respectively so that $d(f_1) = 4$ and $d(f_2) = 8$. Since $v$ appears on the clockwise walk of $f_2$ twice, so that the $(v,f_1)$-entry of $N^TM$ is $1/\sqrt{12}$ and the $(v,f_2)$-entry is $1/\sqrt{6}$.
\label{fig:K4_torus_facial_walks}}
\end{figure}
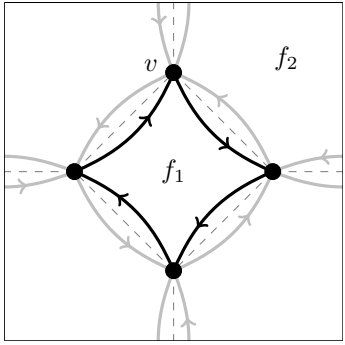
As will be the case for all examples in this paper, the matrix $N$ corresponding to the first reflection of the two-reflection walk on the map is again the normalized arc-vertex incidence matrix. For the matrix $M$ corresponding to the second reflection, we take the \textsl{normalized arc-face incidence matrix}:
\[
M(a,f) = \begin{cases}
\frac{1}{\sqrt{d(f)}} &\text{if $a$ lies inside $f$}; \\
0 &\text{otherwise,}
\end{cases}
\]
where $d(f)$ is the \textsl{face degree} of $f$, which is equal to the number of arcs that lie inside $f$. The discriminant matrix for the walk is the \textsl{normalized vertex-face incidence matrix} $N^TM$: its $(v,f)$-entry is equal to $\frac{\alpha}{\sqrt{d(v)d(f)}}$, where $\alpha$ denotes the number of times that $v$ appears on the clockwise walk corresponding to $f$. The value of this $\alpha$ can be greater than $1$, see the example in Figure \ref{fig:K4_torus_facial_walks}. As mentioned in Section \ref{sec:prelim-projected}, the projected transition matrix of any two-reflection walk can be written in terms of the discriminant matrix, so we have $B = 2N^TMM^TN - I$.

The first row of pictures in Figure \ref{fig:K4_2reflectionwalks} depicts the arc-reversal walk starting in a vertex of the complete graph $K_4$. The second row shows a similar evolution for the vertex-face walk in the unique planar embedding of $K_4$. In the third row, we see the vertex-face walk on an embedding of $K_4$ in the torus. The respective projected transition matrices are given by
\[
\frac{1}{3}
\left[
\begin{array}{rrrr}
0 & 1 & 1 & 1 \\
1 & 0 & 1 & 1 \\
1 & 1 & 0 & 1 \\
1 & 1 & 1 & 0
\end{array}\right],\quad
\frac{1}{9}
\left[
\begin{array}{rrrr}
-3 & 4 & 4 & 4 \\
4 & -3 & 1 & 4 \\
4 & 4 & -3 & 4 \\
4 & 4 & 4 & -3
\end{array}\right] \quad \text{and} \quad
\frac{1}{2}
\left[
\begin{array}{rrrr}
-1 & 1 & 1 & 1 \\
1 & -1 & 1 & 1 \\
1 & 1 & -1 & 1 \\
1 & 1 & 1 & -1
\end{array}\right].
\]
The third walk is $2$-periodic at every vertex.

Another example of a vertex-face walk was depicted in Figure \ref{fig:peak_ST_illustration}. The map in that figure is an embedding of the Cartesian product of the cycle graphs $C_4$ and $C_6$. It is called the `toroidal $(4,6)$-grid'; toroidal grids are discussed in detail in Section \ref{sec:grids}.

\begin{figure}
\centering
\begin{subfigure}{0.19\textwidth}
    \centering
    \includegraphics[scale=.7]{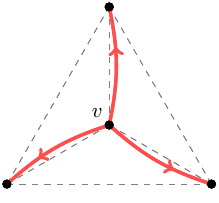}
\end{subfigure}
\begin{subfigure}{0.19\textwidth}
    \centering
    \includegraphics[scale=.7]{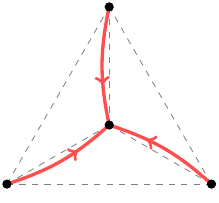}
\end{subfigure}
\begin{subfigure}{0.19\textwidth}
    \centering
    \includegraphics[scale=.7]{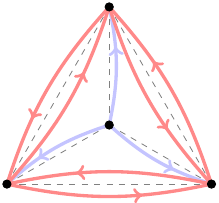}
\end{subfigure}
\begin{subfigure}{0.19\textwidth}
    \centering
    \includegraphics[scale=.7]{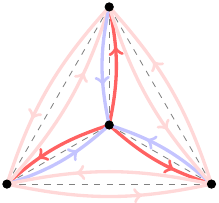}
\end{subfigure}
\begin{subfigure}{0.19\textwidth}
    \centering
    \includegraphics[scale=.7]{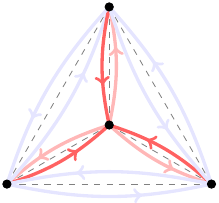}
\end{subfigure}

\vspace{5pt}

\begin{subfigure}{0.19\textwidth}
    \centering
    \includegraphics[scale=.7]{figures/K4ar0.pdf}
\end{subfigure}
\begin{subfigure}{0.19\textwidth}
    \centering
    \includegraphics[scale=.7]{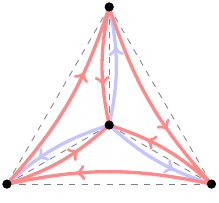}
\end{subfigure}
\begin{subfigure}{0.19\textwidth}
    \centering
    \includegraphics[scale=.7]{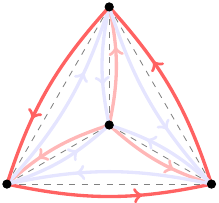}
\end{subfigure}
\begin{subfigure}{0.19\textwidth}
    \centering
    \includegraphics[scale=.7]{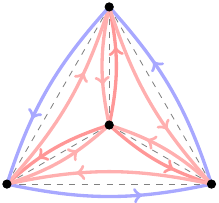}
\end{subfigure}
\begin{subfigure}{0.19\textwidth}
    \centering
    \includegraphics[scale=.7]{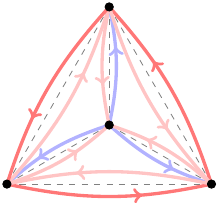}
\end{subfigure}

\vspace{5pt}

\begin{subfigure}{0.19\textwidth}
    \centering
    \includegraphics[scale=.7]{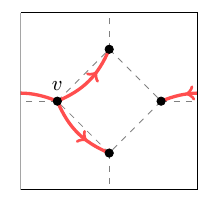}
\end{subfigure}
\begin{subfigure}{0.19\textwidth}
    \centering
    \includegraphics[scale=.7]{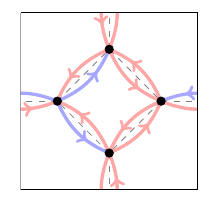}
\end{subfigure}
\begin{subfigure}{0.19\textwidth}
    \centering
    \includegraphics[scale=.7]{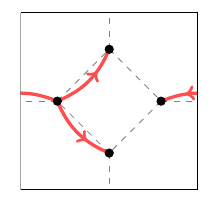}
\end{subfigure}
\begin{subfigure}{0.19\textwidth}
    \centering
    \includegraphics[scale=.7]{figures/K4torus1.pdf}
\end{subfigure}
\begin{subfigure}{0.19\textwidth}
    \centering
    \includegraphics[scale=.7]{figures/K4torus0.pdf}
\end{subfigure}

\caption{Pictured are the first few steps of three different two-reflection walks on the arcs of $K_4$; the arc-reversal walk \emph{(in row 1)}, the vertex-face walk on $K_4$ embedded in the plane \emph{(in row 2)} and the vertex-face walk on $K_4$ embedded in the torus with a faces of size $4$ and $8$ \emph{(in row 3)}. For each walk, we initialize the system at the out-going arcs at vertex $v$ and the picture depict the amplitudes at times $0,1,\ldots, 4$. Like in Figure \ref{fig:grid_vxfwalk_peakST}, red (resp.\ blue) arcs represent positive (resp.\ negative) entries of the corresponding states and the magnitudes are reflected by the opaqueness.\label{fig:K4_2reflectionwalks}}
\end{figure}

The definitions for arc-reversal walks and vertex-face walks can be extended to (embeddings of) multigraphs. Conventionally, the degree of a loop is considered to be $2$, in order to keep the property that the vertex degrees sum to twice the number of edges. Thus, every loop on a vertex $v$ contributes two arcs with $v$ as their tail vertex.

\subsection{Spectral decomposition}
\label{sec:prelim-spectral_decomposition}

A tool that is used extensively in this paper is the spectral decomposition of a matrix (in our case usually the projected transition matrix). The spectral decomposition has many applications in the area of algebraic combinatorics, see e.g.\ \cite{GodRoy2001}. In this subsection, we discuss the notation and nomenclature with respect to the spectral decomposition that we will use throughout this text.

Given a square matrix $B$, we let $\ev(B)$ denote its set of eigenvalues. Its \textsl{spectrum} $\sigma(B)$ is the multiset that contains each eigenvalue with its corresponding algebraic multiplicity. If $B$ is Hermitian, then $\ev(B) \subset \re$ and its eigenspaces are pairwise orthogonal. This means that $B$ can be written in the form
\begin{equation}
\label{eq:spectral_decomposition}
B = \sum_{\theta \in \ev(B)} \theta E_\theta,
\end{equation}
where all eigenvalues $\theta$ are real-valued and where $E_\theta$ denotes the orthogonal projection onto the $\theta$-eigenspace; that is, $E_\theta^* = E_\theta$ and
\[
E_\theta E_\rho = \begin{cases}
E_\theta &\text{if $\theta = \rho$}; \\
0 & \text{otherwise.}
\end{cases}
\]
for all $\theta,\rho \in \ev(B)$. This implies that, indeed, $BE_\theta = \theta E_\theta$. Finally, we have that
\[
\sum_{\theta \in \ev(B)} E_\theta = I.
\]
The expression in \eqref{eq:spectral_decomposition} is called the \textsl{spectral decomposition} of $B$, and we will call the $E_\theta$ the \textsl{spectral idempotents} of $B$. As an example, we give the spectral decomposition of the adjacency matrix $A_{C_n}$ of the cycle graph $C_n$. We denote the rows and columns of $A_{C_n}$ by the elements of $\{0,\ldots,n-1\}$, so that we can define
\[
A_{C_n}(i,j) = 
\begin{cases}
    1 &\text{if $i - j = \pm 1 \slimmod n$}; \\
    0 &\text{otherwise.}
\end{cases}
\]

We will state its spectral decomposition below, without proof, since it is standard and easily derivable from the eigenvectors; see for instance \cite{Dav1979}.
\begin{lemma}
\label{lem:cycle_eigenvectors}
The spectral decomposition of $A_{C_n}$ is given by
\[
A_{C_n} = \sum_{k=0}^{\lfloor n/2 \rfloor} 2\cos\left(\frac{2\pi k}{n}\right) F_{n,k},
\]
where each spectral idempotent $F_{n,k}$ is defined by
\begin{equation}
\label{eq:cycle_idempotent_entries}
F_{n,k}(j,\ell) = \begin{cases}
\frac{1}{n} &\text{if $k = 0$;} \\
\frac{1}{n} \cdot (-1)^{j + \ell} &\text{if $k = \frac{n}{2}$;} \\
\frac{2}{n}\cos\left(\frac{2\pi k(j-\ell)}{n}\right) &\text{otherwise.}
\end{cases}
\end{equation}
The rank of $F_{n,k}$ is $1$ for $k \in \{0,\frac{n}{2}\}$ and $2$ for all other $k$. \qedhere 
\end{lemma}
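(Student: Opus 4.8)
\medskip
\noindent\textbf{Proof proposal.} The plan is to diagonalize $A_{C_n}$ explicitly by exhibiting an orthogonal eigenbasis of $\re^n$, read off the eigenvalues, and then build the spectral idempotents as normalized sums of the rank-one projections onto the basis vectors spanning each eigenspace. Since $C_n$ is a circulant graph---equivalently the Cayley graph $\Cay(\ints_n,\{\pm 1\})$---its standard complex eigenvectors are the characters $j\mapsto e^{2\pi i kj/n}$, but as the paper works over $\re$, I would instead use the real vectors $\mathbf{c}_k = \bigl(\cos\tfrac{2\pi kj}{n}\bigr)_{j=0}^{n-1}$ and $\mathbf{s}_k = \bigl(\sin\tfrac{2\pi kj}{n}\bigr)_{j=0}^{n-1}$, for $0\le k\le \lfloor n/2\rfloor$. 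Note that $\mathbf{s}_0 = \zeros$ and, when $n$ is even, $\mathbf{s}_{n/2} = \zeros$, so the genuinely new vectors are $\mathbf{c}_0 = \ones$, the alternating vector $\mathbf{c}_{n/2} = ((-1)^j)_j$ (when $n$ is even), and the pairs $\mathbf{c}_k,\mathbf{s}_k$ for $0<k<n/2$.

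First I would check that each $\mathbf{c}_k$ and $\mathbf{s}_k$ is an eigenvector of $A_{C_n}$ with eigenvalue $2\cos(2\pi k/n)$; this is immediate from the sum-to-product identity $\cos\alpha(j-1)+\cos\alpha(j+1) = 2\cos\alpha\cos\alpha j$ and its sine counterpart, applied with $\alpha = 2\pi k/n$, using that row and column indices are read modulo $n$ and $\alpha n\in 2\pi\ints$. Since $\cos$ is strictly decreasing on $[0,\pi]$ and $2\pi k/n$ ranges over a subset of $[0,\pi]$ as $k$ runs through $\{0,\dots,\lfloor n/2\rfloor\}$, these eigenvalues are pairwise distinct; this is exactly why the outer sum in the statement ranges over precisely these $k$, while the ``missing'' characters with $k>n/2$ yield the same value $2\cos(2\pi k/n) = 2\cos(2\pi(n-k)/n)$ and hence lie in an eigenspace already accounted for.

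Next I would verify orthogonality and compute norms via the orthogonality relations for characters of $\ints_n$ (equivalently, by summing geometric series): one gets $\langle \mathbf{c}_k,\mathbf{c}_\ell\rangle = \langle\mathbf{s}_k,\mathbf{s}_\ell\rangle = 0$ for $k\ne\ell$ and $\langle\mathbf{c}_k,\mathbf{s}_\ell\rangle = 0$ for all $k,\ell$, together with $\|\ones\|^2 = n$, $\|\mathbf{c}_{n/2}\|^2 = n$ (for $n$ even), and $\|\mathbf{c}_k\|^2 = \|\mathbf{s}_k\|^2 = n/2$ for $0<k<n/2$; the last computation genuinely uses $2k\not\equiv 0\slimmod{n}$, i.e.\ the hypothesis $0<k<n/2$. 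A count then shows we have a family of $n$ pairwise orthogonal nonzero vectors, hence an orthogonal basis of $\re^n$; consequently the orthogonal projection onto the $2\cos(2\pi k/n)$-eigenspace is $F_{n,k} = \mathbf{c}_k\mathbf{c}_k^T/\|\mathbf{c}_k\|^2 + \mathbf{s}_k\mathbf{s}_k^T/\|\mathbf{s}_k\|^2$ (the second summand omitted when $\mathbf{s}_k = \zeros$), and the spectral decomposition $A_{C_n} = \sum_k 2\cos(2\pi k/n)F_{n,k}$, the idempotent relations, and $\sum_k F_{n,k} = I$ all follow formally. Reading off entries then finishes the proof: for $k\in\{0,n/2\}$ a single rank-one term gives $F_{n,k}(j,\ell) = \tfrac1n$ and $\tfrac1n(-1)^{j+\ell}$ respectively, while for $0<k<n/2$ the cosine addition formula gives
\[
F_{n,k}(j,\ell) = \frac{2}{n}\Bigl(\cos\tfrac{2\pi kj}{n}\cos\tfrac{2\pi k\ell}{n} + \sin\tfrac{2\pi kj}{n}\sin\tfrac{2\pi k\ell}{n}\Bigr) = \frac{2}{n}\cos\tfrac{2\pi k(j-\ell)}{n},
\]
and the rank is $2$. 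There is no serious obstacle here: the only points needing care are the bookkeeping around the two exceptional eigenvalues $\pm 2$ and the parity of $n$, and the fact that the norm computations for $0<k<n/2$ rely on $2k\not\equiv 0\slimmod{n}$.
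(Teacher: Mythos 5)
Your proof is correct and complete; the paper deliberately omits a proof of this lemma, stating only that it is "standard and easily derivable from the eigenvectors" (citing Davis's \emph{Circulant Matrices}), and your argument via the real cosine/sine eigenvectors of the circulant, with the orthogonality and norm computations and the careful handling of the cases $k=0$ and $k=n/2$, is exactly that standard derivation.
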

\noindent We will make use of this lemma in Section \ref{sec:lex_products_arc_reversal} and in Section \ref{sec:grids}.

Suppose that the rows and columns of $B$ are indexed by some set $X$. We define the \textsl{mutual eigenvalue support} of a pair of elements $x,y \in X$ by $\Lambda(x,y) = \Lambda^1(x,y) \cup \Lambda^{-1}(x,y)$, where
\[
\Lambda^1(x,y) = \{\theta \in \ev(B) \mid E_\theta(x,y) > 0\} \quad \text{and} \quad \Lambda^{-1}(x,y) = \{\theta \in \ev(B) \mid E_\theta(x,y) < 0\}
\]
are respectively called the \textsl{positive mutual eigenvalue support} and the \textsl{negative mutual eigenvalue support} of $x$ and $y$. By the positive semi-definiteness of the $E_\theta$, we have that $\Lambda(x,x) = \Lambda^1(x,x)$. The set $\Lambda(x,x)$ is simply called the \textsl{eigenvalue support} of $x$, and we will often denote it by $\Lambda(x)$ instead. For $x,y\in X$, we say that $x$ is \textsl{strongly cospectral} to $y$ if and only if for all $\theta \in \ev(B)$,
\[
E_\theta \Ze_x = \pm E_\theta\Ze_y.
\]
The notions of eigenvalue support and strong cospectrality are closely related to perfect state transfer and periodicity; see for instance \cite{God2011,GodSmi2024,Zha2019}.

We have summarized the notation regarding spectral properties that we will use throughout this paper in the table below.

\begin{center}
\begin{tabular}{|l|l|}
\hline
$\ev(B)$ & set of distinct eigenvalues of $B$ \\
$\sigma(B)$ & spectrum of $B$ \\
$E_\theta$ & spectral idempotent for eigenvalue $\theta \in \ev(B)$ \\ 
$\Lambda(x,y)$ & mutual eigenvalue support of $x$ and $y$ \\
$\Lambda^{\pm 1}(x,y)$ & positive/negative mutual eigenvalue support of $x$ and $y$ \\
$\Lambda(x)$ & eigenvalue support of $x$ \\
\hline
\end{tabular}
\end{center}

\subsection{Sums of cosines}
\label{sec:prelim-sums_of_cosines}

In Section \ref{sec:peakst}, we will see that for peak state transfer to occur between a pair of states, the eigenvalues in their mutual eigenvalue support are required to be cosines of rational multiples of $\pi$. In various examples in Sections \ref{sec:lex_products_arc_reversal}, \ref{sec:examples} and \ref{sec:grids}, we need an answer to the following question: given a set of numbers that can be written as cosines of rational multiples of $\pi$, when is a rational linear combination of them (i.e.\ a linear combination with rational coefficients) itself rational? The following lemma is a special case of a theorem from \cite{ConJon1976} that characterizes all rational solutions to the equation
\[
a\cos(2\pi \alpha) + b\cos(2\pi \beta) + c\cos(2\pi \gamma) + d\cos(2\pi \delta) = e.
\]
For the scope of this paper, we only need to know what happens when $c = d = 0$, i.e., when there are at most two cosines. It turns out that the solutions are very restricted: 
\begin{lemma}[\cite{ConJon1976}, Theorem 7]
\label{lem:conway_jones}
Let $\theta, \phi \in [0,\pi/2]$ be rational multiples of $\pi$. The following are true:
\begin{enumerate}[(i)]
\item $\cos(\theta)$ is rational if and only if $\theta \in \{0, \frac{\pi}{3},\frac{\pi}{2}\}$;
\item if $\cos(\theta)$ and $\cos(\phi)$ are irrational and there exist rational numbers $p$ and $q$ such that the linear combination
\[
p\cos(\theta) + q\cos(\phi) = r
\]
is rational, then this rational linear combination is proportional to
\begin{equation}
\label{eq:conway_jones_pi5}
\cos\frac{\pi}{5} - \cos\frac{2\pi}{5} = \frac{1}{2}.
\end{equation}
\end{enumerate}
\qed
\end{lemma}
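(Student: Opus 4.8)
The plan is to prove part~(i) directly --- it is Niven's theorem --- and to obtain part~(ii) by specializing the classification of Conway and Jones, while indicating how one would argue from first principles.

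\emph{Part (i).} Since $\theta$ is a rational multiple of $\pi$, the number $\zeta = e^{i\theta}$ is a root of unity and hence an algebraic integer, and so is $\zeta^{-1} = e^{-i\theta}$; thus $2\cos\theta = \zeta + \zeta^{-1}$ is an algebraic integer. If in addition $\cos\theta \in \mathbb{Q}$, then $2\cos\theta$ is a rational algebraic integer, hence a rational integer; and since $\theta \in [0,\pi/2]$ forces $2\cos\theta \in [0,2]$, we get $2\cos\theta \in \{0,1,2\}$, i.e.\ $\theta \in \{0, \tfrac{\pi}{3}, \tfrac{\pi}{2}\}$. The converse is immediate.

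\emph{Part (ii).} First I would reduce to the case $\theta \neq \phi$ (so $\cos\theta \neq \cos\phi$, as $\cos$ is injective on $[0,\pi/2]$) and $p,q \neq 0$: if $q = 0$ then $\cos\theta = r/p \in \mathbb{Q}$, against hypothesis. After clearing denominators we may take $p,q \in \mathbb{Z}\setminus\{0\}$. Writing $\zeta = e^{i\theta}$ and $\xi = e^{i\phi}$, the identity $p\cos\theta + q\cos\phi = r$ becomes the vanishing sum
\[
p\,\zeta + p\,\zeta^{-1} + q\,\xi + q\,\xi^{-1} - 2r\cdot 1 = 0
\]
of weighted roots of unity, supported on the five roots $\zeta^{\pm 1}, \xi^{\pm 1}, 1$; these are pairwise distinct, since any coincidence (such as $\zeta = 1$, $\zeta = \xi$, or $\zeta^{-1} = \xi$) is excluded by $\theta,\phi \in [0,\pi/2]$, $\theta \neq \phi$, and irrationality of the two cosines. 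I would then invoke the classical structure theorem for minimal vanishing sums of roots of unity, which underlies \cite{ConJon1976}: every minimal $\mathbb{Q}$-linear relation among roots of unity is, up to a rational scalar and a common rotation, the relation $1 + \eta_\ell + \cdots + \eta_\ell^{\ell-1} = 0$ for a single prime $\ell$. Because our relation has support only $5$, only a few small primes can occur, so it remains to run through the finitely many ways of assembling the coefficient pattern $(p,p,q,q,-2r)$ from such building blocks; in each configuration one either finds that $\cos\theta$ or $\cos\phi$ is forced to be rational (a contradiction), or, after reflecting the angles back into $[0,\pi/2]$ and rescaling, lands on $\lambda\cos\tfrac{\pi}{5} - \lambda\cos\tfrac{2\pi}{5} = \tfrac{\lambda}{2}$. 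More economically, one can set $c = d = 0$ in \cite[Theorem~7]{ConJon1976} and read off directly that its list contains exactly one entry with two irrational cosines, namely this relation.

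\emph{The main obstacle.} Part~(i) and the passage from~(ii) to a bounded-support vanishing sum are routine; the real input is the classification of minimal vanishing sums of roots of unity, which I would cite rather than reprove. After that, what remains is the finite case check, and its fiddliest point is the bookkeeping needed to normalize the fifth-root-of-unity solution into the stated form with $\theta,\phi \in [0,\pi/2]$ --- precisely the kind of enumeration that \cite{ConJon1976} carries out in full for up to four cosines.
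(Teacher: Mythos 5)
Your endpoint is the same as the paper's: the text does not prove Lemma~\ref{lem:conway_jones} but obtains it by specializing Theorem~7 of \cite{ConJon1976} to the case of at most two cosines, together with the observation that $\theta=\phi$ would force $\cos\theta=r/(p+q)$ to be rational. Your treatment of part~(ii) ultimately does exactly this (your ``more economical'' route), and your reduction to distinct angles and nonzero coefficients matches the paper's remark almost verbatim (both of you tacitly assume $p+q\neq 0$ there, which is harmless since the relation is then the trivial one $0=0$). Your self-contained algebraic-integrality proof of part~(i) is correct and is more than the paper supplies.

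One concrete caveat about your optional ``from first principles'' sketch for part~(ii): the structure theorem you invoke is false as stated. It is not true that every minimal rational vanishing sum of roots of unity is, up to a rational scalar and a rotation, the sum $1+\eta_\ell+\cdots+\eta_\ell^{\ell-1}$ over the $\ell$-th roots of unity for a single prime $\ell$. The standard counterexample is
\[
\zeta_5+\zeta_5^2+\zeta_5^3+\zeta_5^4+\zeta_6+\zeta_6^5=(-1)+(1)=0,
\]
a minimal vanishing sum of six distinct roots of unity (no proper subsum vanishes, since a proper nonempty subset of $\{\zeta_5,\dots,\zeta_5^4\}$ has irrational sum in the normal basis of $\rats(\zeta_5)$) that is not similar to any single prime cycle. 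The correct input for your route is Mann's theorem, or equivalently the conductor bounds that Conway and Jones actually prove: in a minimal vanishing relation with $k$ terms, all ratios of the participating roots of unity are $N$-th roots of unity with $N=\prod_{\ell\le k}\ell$, which for your five-term relation gives $N=30$ and reduces everything to a finite check. Since you fall back on citing \cite[Theorem~7]{ConJon1976} in any case, the lemma stands exactly as in the paper, but the sketch as written would not survive being expanded into a full proof without replacing that step.
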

\noindent Note that in Theorem 7 of \cite{ConJon1976}, the angles involved are required to be distinct. However, since the statement above includes only two angles at most, this requirement is not necessary. Indeed, we cannot have $\theta = \phi$ in statement (ii), since it implies that $cos(\theta) = \frac{r}{p+q}$ is rational.

\section{Peak state transfer} \label{sec:peakst}

A foundational tool for working with continuous-time quantum walks, which is summarized in \cite[Section 7.1]{CouGod2023book}, is a sequence of inequalities that has led to necessary conditions for perfect state transfer and that has motivated the definition of strongly cospectral vertices, in the study of continuous-time quantum walks.
In this section, we bring this powerful tool to the discrete-time setting.

A continuous-time quantum walk on a graph $G$ has the transition matrix
$U(t) = e^{itA}$,
where $A$ denotes the adjacency matrix of $G$ and where $t$ can take on nonnegative real value. For vertices $u,v$ of $G$,  the value $|U(t)_{u,v}|$ can be bounded by the quantity $1$ by following a sequence of three inequalities. By itself, this bound is trivial, since $U(t)$ is unitary, but each of the three inequalities is an equality if and only if a specific condition holds. If all inequalities are equalities then $|U(t)_{u,v}| = 1$ and there is perfect state transfer from $u$ to $v$ in the continuous-time walk. A similar sequence of inequalities appears in \cite{ChaZha2023} for discrete-time walks in de context of pretty good state transfer.

We will now follow a similar argument for the discrete-time two-reflection walks,  using the projected transition matrix $B$ instead. In the following lemma, $T_n$ denotes the $n$-th Chebyshev polynomial of the first kind. These polynomials are recursively defined by setting $T_0(x) = 1$ and $T_1(x) = x$, and then 
\[
T_{n+1}(x) = 2xT_n(x) - T_{n-1}(x)
\]
for all $x \in \re$. Alternatively, $T_n$ is the unique function that satisfies the expression
\begin{equation}
\label{eq:chebyshev_expression}
T_n(\cos\phi) = \cos(n\phi)
\end{equation}
for all $\phi \in [0,\pi]$.

\begin{lemma}
\label{lem:technical_lemma}
    Let $B \in \re^{X \times X}$ be the projected transition matrix of a two-reflection walk and let the spectral decomposition of $B$ be given by $B = \sum_{\theta} \theta E_{\theta}$. Let $u,v \in X$ and let $\Lambda := \Lambda(u,v)$ denote the mutual eigenvalue support of $u$ and $v$. We have that, for any time $t$,
    \[
    |B_t(u,v)| \leq \sum_{\theta} |E_{\theta}(u,v)|.
    \]
   Equality holds if and only if 
   \[
   T_t(\theta) = \pm 1 \quad \forall \theta \in \Lambda
   \]
   and the sign of $T_t(\theta) E_{\theta}(u,v)$ is the same for every $\theta \in \Lambda$. Further,
   \[ \sum_{\theta} |E_{\theta}(u,v)| \leq 1\]
   and equality holds if and only if $u$ and $v$ are strongly cospectral with respect to $B$.
\end{lemma}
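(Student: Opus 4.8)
The plan is to chain together three inequalities, each isolating one spectral fact, and then identify the equality cases one at a time. First I would expand $B_t(u,v)$ using $B_t = T_t(B)$ and the spectral decomposition: since $T_t(B) = \sum_\theta T_t(\theta) E_\theta$, we get $B_t(u,v) = \sum_\theta T_t(\theta) E_\theta(u,v)$. Note that eigenvalues outside $\Lambda = \Lambda(u,v)$ contribute $0$, so the sum is effectively over $\theta \in \Lambda$. Applying the triangle inequality gives the first bound,
\[
|B_t(u,v)| = \Bigl| \sum_{\theta \in \Lambda} T_t(\theta) E_\theta(u,v) \Bigr| \le \sum_{\theta \in \Lambda} |T_t(\theta)|\,|E_\theta(u,v)|.
\]
Then, since $\theta \in \ev(B)$ and $B$ is a projected transition matrix, each $\theta \in [-1,1]$ (because the eigenvalues of $B = 2DD^* - I$ come from singular values of $D$ in $[0,1]$, as noted after the definition of $B$), so we can write $\theta = \cos\phi_\theta$ and $|T_t(\theta)| = |\cos(t\phi_\theta)| \le 1$ by \eqref{eq:chebyshev_expression}. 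This yields the second bound $\sum_\theta |T_t(\theta)|\,|E_\theta(u,v)| \le \sum_\theta |E_\theta(u,v)|$, which is the claimed inequality. Equality in the triangle inequality holds exactly when all the nonzero terms $T_t(\theta) E_\theta(u,v)$ have the same sign (i.e.\ the sign of $T_t(\theta) E_\theta(u,v)$ is constant over $\theta \in \Lambda$), and equality in the second bound holds exactly when $|T_t(\theta)| = 1$ for every $\theta \in \Lambda$; combining, equality in the lemma holds iff $T_t(\theta) = \pm 1$ for all $\theta \in \Lambda$ and the signs of $T_t(\theta)E_\theta(u,v)$ agree — which is the stated condition. (One should also check that for $\theta$ with $E_\theta(u,v) = 0$ the value of $T_t(\theta)$ is irrelevant, so those eigenvalues impose no constraint; this is why only $\Lambda$ appears.)

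For the third inequality, I would estimate $\sum_\theta |E_\theta(u,v)|$ using the fact that each $E_\theta$ is a positive semidefinite idempotent, hence $E_\theta = E_\theta E_\theta^* = E_\theta E_\theta$, so $E_\theta(u,v) = (E_\theta \Ze_u)^* (E_\theta \Ze_v) = \inprod{E_\theta \Ze_u}{E_\theta \Ze_v}$. By Cauchy–Schwarz, $|E_\theta(u,v)| \le \|E_\theta \Ze_u\| \, \|E_\theta \Ze_v\|$, and then by AM–GM, $\|E_\theta \Ze_u\|\,\|E_\theta \Ze_v\| \le \tfrac12(\|E_\theta\Ze_u\|^2 + \|E_\theta\Ze_v\|^2) = \tfrac12(E_\theta(u,u) + E_\theta(v,v))$. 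Summing over $\theta$ and using $\sum_\theta E_\theta = I$ gives $\sum_\theta |E_\theta(u,v)| \le \tfrac12(1 + 1) = 1$.

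The main work is the equality analysis of this third bound, and I expect it to be the principal obstacle. Equality throughout forces, for every $\theta$, both the Cauchy–Schwarz equality ($E_\theta\Ze_u$ and $E_\theta\Ze_v$ are parallel) and the AM–GM equality ($\|E_\theta\Ze_u\| = \|E_\theta\Ze_v\|$). Together these say $E_\theta\Ze_u = \lambda_\theta E_\theta\Ze_v$ with $|\lambda_\theta| = 1$, and since $B$ is real symmetric the $E_\theta$ are real, so $\lambda_\theta = \pm 1$, i.e.\ $E_\theta\Ze_u = \pm E_\theta\Ze_v$ for all $\theta$ — precisely strong cospectrality of $u$ and $v$ with respect to $B$. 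Conversely, if $u$ and $v$ are strongly cospectral, then $|E_\theta(u,v)| = |\inprod{E_\theta\Ze_u}{E_\theta\Ze_v}| = \|E_\theta\Ze_u\|^2 = E_\theta(u,u)$, and summing gives $\sum_\theta |E_\theta(u,v)| = \sum_\theta E_\theta(u,u) = 1$. A small subtlety to handle carefully: when $E_\theta \Ze_u = E_\theta \Ze_v = 0$ the "parallel" condition is vacuous and consistent with either sign, so such $\theta$ cause no problem; and one must make sure the AM–GM step is not accidentally loose on those terms (it is an equality $0 = 0$ there). With that dispatched, the three equality characterizations assemble into the statement.
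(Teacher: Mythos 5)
Your proposal is correct, and the first half (the Chebyshev expansion, the triangle inequality, the bound $|T_t(\theta)|\le 1$, and the equality analysis restricted to $\Lambda$) matches the paper's argument essentially verbatim. The one place you diverge is in proving $\sum_\theta |E_\theta(u,v)|\le 1$: after the termwise Cauchy--Schwarz step $|E_\theta(u,v)|\le \|E_\theta\Ze_u\|\,\|E_\theta\Ze_v\|$, the paper applies Cauchy--Schwarz a second time to the sum over $\theta$, obtaining $\sqrt{\sum_\theta\|E_\theta\Ze_u\|^2}\sqrt{\sum_\theta\|E_\theta\Ze_v\|^2}=1$, whereas you use AM--GM termwise. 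Both are valid and equally elementary, but your route makes the equality case slightly more direct: AM--GM equality immediately forces $\|E_\theta\Ze_u\|=\|E_\theta\Ze_v\|$ for each $\theta$ separately, which combined with the Cauchy--Schwarz equality (linear dependence) gives $E_\theta\Ze_u=\pm E_\theta\Ze_v$ at once; the paper instead writes $E_\theta\Ze_v=\alpha_\theta E_\theta\Ze_u$ and needs an extra summation argument to conclude $|\alpha_\theta|=1$. Your handling of the degenerate terms ($E_\theta\Ze_u=E_\theta\Ze_v=0$) and of the converse direction is also correct, so nothing is missing.
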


\begin{proof}
For any time $t$, we have
\begin{align}
    \nonumber |B_t(u,v)| &= \left| \sum_{\theta} T_t(\theta) E_{\theta}(u,v) \right| \\
    \label{eq:Bt_inequality1} &\leq \sum_{\theta} |T_t(\theta) E_{\theta}(u,v) | \\
    \nonumber &=  \sum_{\theta}| T_t(\theta)| \cdot |E_{\theta}(u,v)| \\
     \label{eq:Bt_inequality2} &\leq  \sum_{\theta} |E_{\theta}(u,v)|
\end{align}
where the inequality \eqref{eq:Bt_inequality1} is due to  the triangle inequality and \eqref{eq:Bt_inequality2} holds because the extrema of $T_t(x)$ are $\{-1,1\}$ for $x \in [-1,1]$. (Indeed, all eigenvalues of $B$ lie inside $[-1,1]$: for any unit-length vector $\Zv \in \re^{X \times X}$, we have
\[
\|B\Zv\| = \|N^TUN\Zv\| \leq \|U N\Zv\| = 1,
\]
since $N$ has orthonormal columns.) Moreover, $|B_t(u,v)| = \sum_{\theta} |E_{\theta} (u,v)|$ if and only if both \eqref{eq:Bt_inequality1} and \eqref{eq:Bt_inequality2} are equalities; the latter holds precisely when $T_{t}(\theta) = \pm 1$ for all $\theta \in \Lambda$, the former is true when $T_t(\theta)E_\theta(u,v)$ has the same sign for every $\theta \in \Lambda$.

Finally, by applying Cauchy-Schwarz twice, we see that
\begin{align}
\nonumber\sum_\theta |E_{\theta}(u,v)| &= \sum_{\theta}|(E_\theta\Ze_u)^T (E_\theta\Ze_v)| \\
\label{eq:idempotent_inequality1}&\leq \sum_{\theta} \|E_\theta\Ze_u\|\|E_\theta\Ze_v\| \\
\nonumber &\leq \sqrt{ \sum_{\theta} \|E_\theta\Ze_u\|^2} \sqrt{\sum_{\theta} \|E_\theta\Ze_v\|^2} \\
\nonumber&= \sqrt{\|\Ze_u\|^2}\sqrt{\|\Ze_v\|^2} \\
\nonumber&= 1,
\end{align}
where the second to last equality follows from  Pythagoras' theorem.
Moreover, if $\sum_{\theta}E_{\theta}(u,v) = 1$, then both of the inequalities above are equalities. In particular, considering  the inequality \eqref{eq:idempotent_inequality1}, this implies that $E_\theta\Ze_u$ and $E_\theta\Ze_v$ are linearly dependent for every eigenvalue $\theta$, i.e., there exist $\alpha_\theta \in \re$ such that
\[
E_\theta\Ze_v = \alpha_\theta E_\theta\Ze_u.
\]
Hence in this case,
\[
1 = \sum_{\theta} \|E_\theta\Ze_u\|\|E_\theta\Ze_v\| = \sum_{\theta} |\alpha_\theta|\cdot \|E_\theta\Ze_u\|^2 \leq \sum_{\theta} \|E_\theta\Ze_u\|^2 = 1,
\]
which implies for each $\theta$ that either $|\alpha_\theta| = 1$ or $E_\theta\Ze_u = E_\theta\Ze_u = 0$. In other words, $u$ and $v$ are strongly cospectral with respect to $B$. Conversely, if $u$ and $v$ are strongly cospectral with respect to $B$, then we can substitute $E_\theta\Ze_v = \pm E_\theta\Ze_u$ and see that
\[
\sum_{\theta} |E_\theta(u,v)| = \sum_{\theta}|(E_\theta\Ze_u)^T (E_\theta\Ze_u)| = \sum_{\theta} \|E_\theta\Ze_u\|^2 = 1. \qedhere
\]
\end{proof}

In Lemma \ref{lem:technical_lemma} above, we see that the maximal absolute value of the $(u,v)$-entry of $B_t$ never exceeds the sum of the absolute values of the $(u,v)$-entries of the spectral idempotents $E_\theta$. If this value is attained for some integer $\tau > 0$, i.e.\ if 
\[
|B_\tau(u,v)| = \sum_{\theta} |E_\theta(u,v)|
\]
for distinct indices $u $ and $ v$, we say that there is \textsl{peak state transfer} from $u$ to $v$ at time $\tau$. We note that if we allow $u=v$, then $|B_\tau(u,u)| = \sum_\theta |E_\theta(u,u)|$ occurs if and only if $|B_\tau(u,u)| =1$ since each $E_\theta(u,u)$ is nonnegative and they sum to $1$. Hence `peak state transfer' from vertex to itself only occurs in the form of periodicity.

\begin{remark}
\label{rem:lambda_empty}
    There is a trivial case of peak state transfer that occurs when $E_\theta(u,v) = 0$ for all $\theta$, i.e.,\ when the mutual eigenvalue support of $u$ and $v$ is empty. We will call this \textsl{zero state transfer}. When this happens, Lemma \ref{lem:technical_lemma} implies that $B_t(u,v) = 0$ for all $t$. In fact, the reverse implication also holds, which we prove in Lemma \ref{lem:zero_state_transfer}. As an example, zero state transfer occurs in the arc-reversal walk precisely between vertices in distinct connected components of the underlying graph. Although we are usually interested in cases with high state transfer, we will expand on this a bit further in Section \ref{subsec:no-st}. Otherwise, this is a case of peak state transfer that we would like to disregard. As such, we will not consider zero state transfer as an instance of peak state transfer throughout this paper, unless mentioned otherwise.
\end{remark}

In the theorem below, we will characterize peak state transfer in two-reflection walks, as well as perfect state transfer, which is a special case of peak state transfer. For this, we also need a result about the extrema of $T_t$, which are, for instance, given in \cite{Riv1974}:
\begin{lemma}
\label{lem:Chebyshev_extrema}
Let $x \in [-1,1]$ and $n > 0$ be some positive integer. The following are true:
\begin{enumerate}[(i)]
\item $T_n(x) = 1$ if and only if $x  = \cos  \frac{2k\pi}{n}$ for  some integer $0 \leq k \leq \frac{n}{2}$;
\item $T_n(x) = -1$ if and only if $x  = \cos \frac{(2k + 1)\pi }{n} $ for some integer $0 \leq k \leq \frac{n-1}{2}$.
\end{enumerate}
\end{lemma}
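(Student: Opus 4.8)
The plan is to reduce both statements to the trigonometric identity \eqref{eq:chebyshev_expression}, namely $T_n(\cos\phi) = \cos(n\phi)$ for $\phi \in [0,\pi]$. The one preliminary fact I would invoke is that $\phi \mapsto \cos\phi$ restricts to a strictly decreasing bijection from $[0,\pi]$ onto $[-1,1]$; hence every $x \in [-1,1]$ can be written as $x = \cos\phi$ for a unique $\phi \in [0,\pi]$, and questions about $T_n(x)$ translate losslessly into questions about $\cos(n\phi)$.

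For part (i), I would write $x = \cos\phi$ with $\phi \in [0,\pi]$, so that $T_n(x) = \cos(n\phi)$ by \eqref{eq:chebyshev_expression}. Then $T_n(x) = 1$ holds exactly when $n\phi \in 2\pi\ints$; since $\phi \geq 0$ and $n > 0$ this means $n\phi = 2k\pi$ for an integer $k \geq 0$, i.e.\ $\phi = \frac{2k\pi}{n}$, and the remaining constraint $\phi \leq \pi$ is equivalent to $k \leq \frac{n}{2}$. Thus the solutions are precisely the $x = \cos\frac{2k\pi}{n}$ with $0 \leq k \leq \frac{n}{2}$, the converse direction being immediate because any such $\frac{2k\pi}{n}$ lies in $[0,\pi]$ and $\cos(2k\pi) = 1$. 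Part (ii) is handled identically, using that $\cos(n\phi) = -1$ iff $n\phi \in \pi + 2\pi\ints$, hence $\phi = \frac{(2k+1)\pi}{n}$ for an integer $k \geq 0$, and here $\phi \leq \pi$ becomes $2k+1 \leq n$, i.e.\ $k \leq \frac{n-1}{2}$.

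There is essentially no obstacle: the statement is classical and the argument is routine once \eqref{eq:chebyshev_expression} is in hand. The only places that warrant a sentence of justification are the bijectivity of $\cos$ on $[0,\pi]$ (so that moving between $x$ and $\phi$ introduces no spurious or missing solutions) and the bookkeeping that converts $\phi \in [0,\pi]$ into the stated ranges for $k$. Since the lemma only claims an ``if and only if'', I would not bother verifying that distinct admissible $k$ yield distinct $x$ (although they do, as $\phi = \frac{2k\pi}{n}$ and $\phi = \frac{(2k+1)\pi}{n}$ are strictly increasing in $k$ within $[0,\pi]$); alternatively one may simply cite \cite{Riv1974}, as the excerpt already notes.
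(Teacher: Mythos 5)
Your proof is correct and is exactly the argument the paper has in mind: the lemma is stated with a citation to \cite{Riv1974}, and the remark immediately following it observes that the claims are immediate from the identity $T_n(\cos\phi)=\cos(n\phi)$ of \eqref{eq:chebyshev_expression}, which is precisely the reduction you carry out. Your write-up just fills in the routine bookkeeping (bijectivity of $\cos$ on $[0,\pi]$ and the translation of $\phi\in[0,\pi]$ into the stated ranges for $k$) that the paper leaves implicit.
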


\begin{remark}
Given the expression for $T_n$ as given in equation \eqref{eq:chebyshev_expression}, the properties in Lemma \ref{lem:Chebyshev_extrema} are trivial. From an alternative viewpoint, note that, as is pointed out in \cite{Zha2021}, each eigenvalue of $B = 2DD^* - I$ can be written in the form $\theta = \cos(\phi)$, where $\phi$ is such that $e^{i\phi}$ is an eigenvalue of the transition matrix $U$. Hence $T_{n}(\theta) = 1$ if and only if $e^{i\phi}$ is an $n$-th root of unity, and $T_{n}(\theta) = -1$ if and only if $n$ is even and $e^{i\phi}$ is an $(n/2)$-th root of unity. In Theorem \ref{thm:PeakSTchar}, we give a characterization of peak state transfer that does not (explicitly) mention the Chebyshev polynomials.
\end{remark}
The following theorem is very similar to characterizations of perfect state transfer that were given by \cite[Theorem 5.3]{Zha2019} for the arc-reversal walk on regular graphs and \cite[Theorem 6.5]{KubYos2024}
for the arc-reversal walk in general. Our theorem is different in the sense that it is stated for two-reflection walks in general and that it also characterizes peak state transfer, which does not require strong cospectrality. Moreover, we do not characterize peak/perfect state transfer at a specific time. Rather, in our statement of the theorem, we characterize whether it occurs, and specify the times at which the transfer occurs. The theorem is in part a reformulation of Lemma \ref{lem:technical_lemma}. Its proof seems rather long; it requires careful bookkeeping, with some elementary number theory, to determine the first time of peak state transfer. Note that the assumption of $\Lambda$ being non-empty implies that zero state transfer from $u$ to $v$ does not occur.

\begin{theorem}\label{thm:PeakSTchar}
Let $B \in \re^{X \times X}$ be the projected transition matrix of a two-reflection walk and let the spectral decomposition of $B$ be given by $B = \sum_{\theta} \theta E_\theta$. Let $u,v \in X$ be distinct, and let $\Lambda = \Lambda^1 \cup \Lambda^{-1}$, where
\[
\Lambda^1 = \{\theta \mid E_\theta(u,v) > 0\} \quad \text{and} \quad \Lambda^{-1} = \{\theta \mid E_\theta(u,v) < 0\}
\]
denote the respective postive and negative mutual eigenvalue support of $u$ and $v$, and assume that $\Lambda \neq \emptyset$. There is peak state transfer from $u$ to $v$ at some time if and only if the following hold:
\begin{enumerate}[(i)]
\item Each $\theta \in \Lambda$ is a cosine of a rational multiple of $\pi$: it can be uniquely written as
\[
\theta = \cos\frac{p(\theta)\pi}{q(\theta)},
\]
where $p(\theta) \geq 0$ and $q(\theta) > 0$ are integers such that $p(\theta) \leq q(\theta)$ and $\gcd(p(\theta),q(\theta)) = 1$.
\item Let $\tau = \lcm\{q(\theta) \mid \theta \in \Lambda\}$. For some choice of $\gamma \in \{-1,1\}$, and all $\theta \in \Lambda$:
\begin{enumerate}[(a)]
    \item $\theta \in \Lambda^\gamma$ if and only if the integer $\tau \cdot \frac{p(\theta)}{q(\theta)}$ is even;
    \item $\theta \in \Lambda^{-\gamma}$ if and only if the integer $\tau \cdot \frac{p(\theta)}{q(\theta)}$ is odd.
\end{enumerate}
\end{enumerate}
If there is peak state transfer, it occurs precisely at all times that are odd multiples of $\tau$. This is perfect state transfer if and only if, in addition to (i) and (ii),  $u$ and $v$ are strongly cospectral with respect $B$.
\end{theorem}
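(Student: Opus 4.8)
The plan is to reduce everything to Lemma~\ref{lem:technical_lemma}, which already tells us that there is peak state transfer from $u$ to $v$ at time $t$ if and only if (a) $T_t(\theta) = \pm 1$ for every $\theta \in \Lambda$, and (b) the sign of $T_t(\theta)E_\theta(u,v)$ is the same for every $\theta \in \Lambda$. The whole theorem is then a translation of (a) into condition (i) together with the definition of $\tau$, a translation of (b) into condition (ii), a reading-off of the times at which (a) and (b) hold simultaneously, and the last inequality of Lemma~\ref{lem:technical_lemma} for the perfect state transfer clause.

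First I would treat condition (i). Recall that $\ev(B)\subseteq[-1,1]$ (shown inside the proof of Lemma~\ref{lem:technical_lemma}). If some time $t$ with $T_t(\theta)=\pm1$ for all $\theta\in\Lambda$ exists, then Lemma~\ref{lem:Chebyshev_extrema} gives, for each such $\theta$, an integer $m$ with $0\le m\le t$ and $\theta=\cos(m\pi/t)$; reducing $m/t$ to lowest terms and using that $\cos$ is injective on $[0,\pi]$ yields the unique representation $\theta=\cos(p(\theta)\pi/q(\theta))$ of (i). Conversely, given (i), the identity $T_t(\cos\phi)=\cos(t\phi)$ shows $T_t(\theta)=\pm1$ if and only if $t\,p(\theta)/q(\theta)\in\ints$, and since $\gcd(p(\theta),q(\theta))=1$ this happens exactly when $q(\theta)\mid t$ (the degenerate case $p(\theta)=0$, i.e.\ $\theta=1$ with $q(\theta)=1$, being harmless). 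Hence condition (a) holds at $t$ for all $\theta\in\Lambda$ precisely when $\tau\mid t$, where $\tau=\lcm\{q(\theta):\theta\in\Lambda\}$; at such a time $t=s\tau$ we get the clean evaluation $T_{s\tau}(\theta)=\cos(s\,n_\theta\pi)=(-1)^{s n_\theta}$, where $n_\theta:=\tau\,p(\theta)/q(\theta)\in\ints$ is exactly the integer appearing in condition (ii).

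The observation that makes the parity work out is elementary: since $\sum_\theta E_\theta=I$ and $u\neq v$, the sum $\sum_{\theta\in\Lambda}E_\theta(u,v)$ equals the $(u,v)$-entry of $I$, which is $0$, so \emph{both} $\Lambda^1$ and $\Lambda^{-1}$ are non-empty. At $t=s\tau$ the sign of $T_t(\theta)E_\theta(u,v)$ is $(-1)^{s n_\theta}\sgn(E_\theta(u,v))$; if $s$ is even this equals $\sgn(E_\theta(u,v))$, which cannot be constant over $\Lambda$ because $\Lambda^1$ and $\Lambda^{-1}$ are both non-empty, ruling out peak state transfer at even multiples of $\tau$. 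If $s$ is odd, constancy means $(-1)^{n_\theta}\sgn(E_\theta(u,v))=\gamma$ for a common sign $\gamma\in\{-1,1\}$ and all $\theta\in\Lambda$, which rearranges to exactly the two equivalences in condition (ii) (with $\Lambda^\gamma$ the set of $\theta\in\Lambda$ whose $n_\theta$ is even); as this does not depend on which odd $s$ is chosen, condition (b) holds at some time if and only if (i) and (ii) hold, and then precisely at the odd multiples of $\tau$. For the last clause, perfect state transfer at time $t$ is the equality $|B_t(u,v)|=1$, which by the full chain of inequalities in Lemma~\ref{lem:technical_lemma} forces both $|B_t(u,v)|=\sum_\theta|E_\theta(u,v)|$ — so $t$ is a peak time and (i), (ii) hold — and $\sum_\theta|E_\theta(u,v)|=1$, i.e.\ strong cospectrality of $u$ and $v$ with respect to $B$; conversely (i), (ii) and strong cospectrality give $|B_\tau(u,v)|=\sum_\theta|E_\theta(u,v)|=1$ at any peak time.

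I expect the main obstacle to be the careful bookkeeping in the second step: pinning down that the \emph{first} peak time is exactly $\tau$ and not a proper multiple of it, justifying the reduction $T_{s\tau}(\theta)=(-1)^{s n_\theta}$ together with the lowest-terms and least-common-multiple arithmetic (and handling the degenerate eigenvalue $\theta=1$). The conceptual content is slight — essentially all of it is packaged in Lemma~\ref{lem:technical_lemma} — and the one genuinely new ingredient is the remark that $\Lambda^1$ and $\Lambda^{-1}$ are both non-empty, which is what simultaneously rules out peak state transfer at even multiples of $\tau$ and fixes the parity dichotomy in condition (ii).
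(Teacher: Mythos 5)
Your proposal is correct and follows essentially the same route as the paper: both reduce to Lemma~\ref{lem:technical_lemma} together with Lemma~\ref{lem:Chebyshev_extrema}, both hinge on the observation that $\sum_\theta E_\theta(u,v)=0$ forces $\Lambda^1$ and $\Lambda^{-1}$ to be non-empty, and both carry out the same lowest-terms/least-common-multiple bookkeeping to pin down the peak times as the odd multiples of $\tau$. Your evaluation $T_{s\tau}(\theta)=(-1)^{s n_\theta}$ is a slightly slicker packaging of the paper's case analysis, but the argument is the same.
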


\begin{proof}
Note first of all that $\Lambda^1$ and $\Lambda^{-1}$ are both non-empty: since $u$ and $v$ are distinct, we have that
\[
\sum_\theta E_\theta(u,v) = 0
\]
and because at least one of $\Lambda^{\pm 1}$ is non-empty, the other is non-empty as well.

By Lemma \ref{lem:technical_lemma} and Lemma  \ref{lem:Chebyshev_extrema}, there is peak state transfer from $u$ to $v$ at time $t$ if and only if the following hold:
\begin{enumerate}[(i')]
\item Each $\theta \in \Lambda$ is a cosine of a rational multiple of $\pi$ in the following way:
\[
\theta = \cos\frac{s(\theta)\pi}{t},
\]
where $s(\theta)$ is some integer that satisfies $0 \leq s(\theta) \leq t$.
\item For some choice of $\gamma \in \{-1,1\}$, and all $\theta \in \Lambda$:
\begin{enumerate}[(a')]
    \item $\theta \in \Lambda^\gamma$ if and only if $s(\theta)$ is even;
    \item $\theta \in \Lambda^{-\gamma}$ if and only if $s(\theta)$ is odd.
\end{enumerate}
\end{enumerate}

Now assume that there is peak state transfer from $u$ to $v$ at some time, say $t$. Then (i') and (ii') hold for some integers $s(\theta)$ and a choice of $\gamma \in \{-1,1\}$. For each $\theta \in \Lambda$, let
\[
\frac{p(\theta)}{q(\theta)} = \frac{s(\theta)}{t}
\]
be the reduced form of that nonnegative fraction, with integers $p(\theta)$ and $q(\theta)$ that satisfy the conditions as in (i) in the statement of the theorem. To prove that (ii) holds, we will write
\[
\tau \coloneq \lcm\{q(\theta) \mid \theta \in \Lambda\},
\]
and show that this value satisfies the conditions (a) and (b). We have for all $\theta \in \Lambda$ that
\begin{equation}
\label{eq:tau_divides_t}
\frac{t}{\tau}\cdot \left(\tau \cdot \frac{p(\theta)}{q(\theta)}\right) = t\cdot \frac{p(\theta)}{q(\theta)} = s(\theta).
\end{equation}
Note that $\tau$ must divide $t$ since every $q(\theta)$ divides $t$ by definition. If $\theta \in \Lambda^{-\gamma}$, then $s(\theta)$ is odd by (b'), so that the integers $\frac{t}{\tau}$ and $\tau \cdot \frac{p(\theta)}{q(\theta)}$ are both odd and (b) holds. In particular, since $\Lambda^{-\gamma}$ is always non-empty, $\frac{t}{\tau}$ is always odd. Hence for $\theta \in \Lambda^\gamma$, since $s(\theta)$ is even by (a'), we conclude from \eqref{eq:tau_divides_t} that $\tau \cdot \frac{p(\theta)}{q(\theta)}$ is even, so that (a) holds as well.

Conversely, suppose that (i) and (ii) hold for certain $p(\theta)$, $q(\theta)$ and $\gamma$ and suppose that $\tau$ is again the least common multiple of the $q(\theta)$. Then we can write for all $\theta \in \Lambda$:
\[
\theta = \cos\frac{p(\theta) \pi}{q(\theta)} = \cos\frac{s(\theta) \pi}{\tau},
\]
where
\[
s(\theta) \coloneq \tau \cdot \frac{p(\theta)}{q(\theta)} \quad \text{is }
\begin{cases}
\text{even} &\text{if $\theta \in \Lambda^{\gamma}$}; \\
\text{odd} &\text{if $\theta \in \Lambda^{-\gamma}$},
\end{cases}
\]
so that (i') and (ii') hold for $t = \tau$, and there is peak state transfer at time $\tau$. There is also peak state transfer at any odd multiple $(2k+1)\tau$ of $\tau$, since
\[
\theta = \cos\frac{(2k+1) s(\theta) \pi}{(2k+1)\tau}
\]
and since $(2k+1)s(\theta)$ has the same parity as $s(\theta)$ for all $\theta \in \Lambda$. Now assume that there is peak state transfer at a time that is an even multiple of $\tau$, say $2k\tau$, and pick some $\theta \in \Lambda^{-\gamma}$. We find by (i') that
\[
\cos\frac{s(\theta) \pi}{\tau} = \theta =  \cos\frac{r(\theta) \pi}{2k\tau},
\]
where $0 \leq r(\theta) \leq 2k\tau$ is an odd integer. However, by the restrictions on $s(\theta)$ and $r(\theta)$, we must have that $r(\theta) = 2ks(\theta)$, which contradicts $r(\theta)$ being odd. We conclude that there is no peak state transfer at any time that is an even multiple of $\tau$. If there is peak state transfer at some time $t$ that is not a multiple of $\tau$, then there is some $\theta \in \Lambda$ for which $q(\theta)$ does not divide $t$. On the other hand, by (i'), there is some integer $0 \leq r(\theta) \leq t$ such that
\[
\cos\frac{p(\theta)\pi}{q(\theta)} = \theta = \cos\frac{r(\theta)\pi}{t}.
\]
Similarly to the previous case, this implies that $r(\theta) = t\cdot \frac{p(\theta)}{q(\theta)}$, which is not an integer (as $\gcd(p(\theta),q(\theta)) = 1$), leading to another contradiction.

Finally, the peak state transfer is perfect state transfer if and only if the second inequality in Lemma \ref{lem:technical_lemma} holds as well, i.e., if and only if $u$ and $v$ are strongly cospectral with respect to $B$.
\end{proof}

\begin{remark}
\label{rem:gamma=1}
The value of $\gamma$ in Theorem \ref{thm:PeakSTchar} above determines the sign of the entry $B_{\tau}(u,v)$, since $\gamma$ is the sign of $T_\tau(\theta)E_\theta(u,v)$ for every $\theta$:
\[
B_\tau(u,v) = \sum_\theta T_\tau(\theta)E_\theta(u,v) = \sum_\theta \gamma |E_\theta(u,v)| = \gamma \sum_\theta |E_\theta(u,v)|.
\]
For arc-reversal walks on connected graphs, the projected transition matrix $B = D^{-1/2}AD^{-1/2}$ is nonnegative and irreducible. It always has $D^{1/2} \ones$ as an eigenvector for its Perron-Frobenius eigenvector $1$. Therefore, the $1$-eigenspace is one-dimensional, so that $1$ is in the positive mutual eigenvalue support of all pairs $u \neq v$. As $1 = \cos\frac{0}{1}$, we must have $\gamma = 1$ for all instances of peak state transfer in arc-reversal walks on connected graphs. This answers a question left open in \cite{KubYos2024}: the authors mention that they do not know of perfect state transfer occurring in non-regular graphs with $\gamma = -1$. With our remark we have argued that such examples do not exist: the $\gamma$ in their characterization of perfect state transfer in arc-reversal walks \cite[Theorem 6.5]{KubYos2024} can be replaced by the value $1$.
\end{remark}

The characterization of peak state transfer from $u$ to $v$ in Theorem \ref{thm:PeakSTchar} gives us that it is determined by the eigenvalues and the $(u,v)$-entries of the spectral idempotents. In particular, the status of peak state transfer will be the same for two discrete-time quantum walks whose projected transition matrices are cospectral and where the $(u,v)$-entries of the spectral idempotents are equal. We discuss a specific instance of this behavior for arc-reversal walks on distance-regular graphs in Remark \ref{rem:DRGs}.

In the next result, we give a precise characterization of periodicity in two-reflection walks, which is similar to Theorem \ref{thm:PeakSTchar}. 

\begin{theorem}
\label{thm:periodicity_characterisation}
Let $B \in \re^{X \times X}$ be the projected transition matrix of a two-reflection walk and let the spectral decomposition of $B$ be given by $ B = \sum_{\theta} \theta E_{\theta}$. Let $u \in X$ and let $\Lambda = \{\theta \mid E_{\theta}(u,u) \neq 0\}$ be the eigenvalue support of $u$. There is periodicity at $u$ if and only if each $\theta \in \Lambda$ is a cosine of a rational multiple of $\pi$:
\begin{equation}
\label{eq:periodicity_theta_cosine}
\theta = \cos\frac{p(\theta)\pi}{q(\theta)},
\end{equation}
where $p(\theta) \geq 0$ and $q(\theta) > 0$ are integers such that $p(\theta) \leq q(\theta)$ and $\gcd(p(\theta),q(\theta)) = 1$. If this is the case, let $\tau = \lcm\{q(\theta) \mid \theta \in \Lambda\}$. The period is $\tau$ if $\tau \cdot \frac{p(\theta)}{q(\theta)}$ has the same parity for all $\theta \in \Lambda$. Otherwise, the period is $2\tau$.
\end{theorem}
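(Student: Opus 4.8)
The plan is to mimic the structure of the proof of Theorem~\ref{thm:PeakSTchar}, but with the pair $(u,v)$ replaced by $(u,u)$, and to track the role of the sign vector carefully. First I would observe that, just as in the derivation preceding Lemma~\ref{lem:technical_lemma}, for $u=v$ we have $B_t(u,u) = \sum_\theta T_t(\theta) E_\theta(u,u)$, and since each $E_\theta(u,u)\geq 0$ with $\sum_\theta E_\theta(u,u)=1$, periodicity at $u$ at time $t$ (i.e.\ $U^t\phi_u = \gamma\phi_u$, equivalently $|B_t(u,u)|=1$) happens if and only if $T_t(\theta) = \gamma$ for \emph{all} $\theta\in\Lambda$, for a common sign $\gamma\in\{-1,1\}$. (The ``only if'' uses that a convex combination of numbers in $[-1,1]$ has absolute value $1$ iff all of them equal the same $\pm 1$; the ``if'' is immediate.) This is the analogue of condition (ii') in the proof of Theorem~\ref{thm:PeakSTchar}, except that now \emph{every} $\theta\in\Lambda$ must land on the same extremum, rather than being split between $\Lambda^1$ and $\Lambda^{-1}$ according to parity.

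Next I would invoke Lemma~\ref{lem:Chebyshev_extrema}: $T_t(\theta)\in\{-1,1\}$ for all $\theta\in\Lambda$ forces each $\theta = \cos(s(\theta)\pi/t)$ for an integer $0\leq s(\theta)\leq t$, which is exactly the assertion that each $\theta$ is a cosine of a rational multiple of $\pi$; writing $p(\theta)/q(\theta) = s(\theta)/t$ in lowest terms gives the uniqueness statement \eqref{eq:periodicity_theta_cosine}. This proves the ``only if'' direction of the first claim, and conversely if each $\theta$ has the stated form then $T_{q(\theta)}(\theta) = \pm 1$, so periodicity is possible. The parity of $T_t(\theta)$ (whether it equals $+1$ or $-1$) is governed, via Lemma~\ref{lem:Chebyshev_extrema}, by the parity of the integer $t\cdot p(\theta)/q(\theta)$: it is $+1$ when this integer is even and $-1$ when it is odd.

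For the period computation I would set $\tau = \lcm\{q(\theta)\mid\theta\in\Lambda\}$, so that $\tau$ is the smallest positive integer for which $\tau\cdot p(\theta)/q(\theta)$ is an integer for every $\theta\in\Lambda$ — hence the smallest candidate time at which $T_t(\theta)=\pm1$ for all $\theta$ simultaneously; any $t$ achieving this must be a multiple of $\tau$ (same argument as in Theorem~\ref{thm:PeakSTchar}: if $q(\theta)\nmid t$ then $t\cdot p(\theta)/q(\theta)\notin\ints$). Now at time $\tau$ the value $T_\tau(\theta)$ equals $(-1)^{\tau p(\theta)/q(\theta)}$. If all these integers $\tau p(\theta)/q(\theta)$ have the same parity, then $T_\tau(\theta)$ is a common sign $\gamma$ over all $\theta\in\Lambda$, so the walk is periodic at $u$ with period $\tau$. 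If instead the parities differ, then $T_\tau$ takes the value $+1$ on some eigenvalues in $\Lambda$ and $-1$ on others, so $\tau$ is not a period; but at time $2\tau$ every integer $2\tau p(\theta)/q(\theta)$ is even, so $T_{2\tau}(\theta)=1$ for all $\theta\in\Lambda$, giving period dividing $2\tau$, and since no proper multiple of $\tau$ strictly between $\tau$ and $2\tau$ exists and $\tau$ itself fails, the period is exactly $2\tau$.

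The main obstacle is the last bookkeeping step: one must check that when the parities of the $\tau p(\theta)/q(\theta)$ differ, no odd multiple of $\tau$ smaller considerations are relevant — more precisely, that the period is $2\tau$ and not merely a divisor of $2\tau$ other than $\tau$. This follows because any period $t$ is a multiple of $\tau$, say $t = m\tau$; writing $T_t(\theta) = (-1)^{m\cdot(\tau p(\theta)/q(\theta))}$, a common sign over all $\theta\in\Lambda$ requires $m$ to be even whenever two of the integers $\tau p(\theta)/q(\theta)$ have opposite parity, so the smallest such $t$ is $2\tau$. I would also note the minor edge case where $\Lambda$ might be a singleton or where some $q(\theta)=1$ (i.e.\ $\theta=\pm1$), which the parity argument handles uniformly. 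Compared with Theorem~\ref{thm:PeakSTchar}, the proof is genuinely shorter because there is no need to disentangle the positive and negative supports; the only real content is Lemma~\ref{lem:Chebyshev_extrema} plus the $\lcm$/parity argument.
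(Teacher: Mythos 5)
Your proposal is correct and follows essentially the same route as the paper's proof: reduce periodicity to $T_t(\theta)=\pm1$ with a common sign on $\Lambda$ via the nonnegativity of the $E_\theta(u,u)$, invoke Lemma~\ref{lem:Chebyshev_extrema} to force the cosine-of-rational-multiple-of-$\pi$ form, and then run the $\lcm$/parity argument to pin down the period as $\tau$ or $2\tau$. The only (harmless) difference is that you verify $T_{2\tau}(\theta)=1$ directly from the parity of $2\tau p(\theta)/q(\theta)$, and you spell out more explicitly than the paper does that any period must be a multiple of $\tau$, whereas the paper uses the identity $T_{2\tau}=2T_\tau^2-1$.
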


\begin{proof}
Note first of all that since the $E_r$ are positive semidefinite and sum to the identity matrix, we have that
\[
\sum_{\theta} |E_\theta(u,u)| = \sum_{\theta} E_\theta(u,u) = 1.
\]
In addition,
\[
\Lambda = \Lambda^1 = \{\theta \mid E_\theta(u,u) > 0\},
\]
so that for some time $t$, by Lemma \ref{lem:technical_lemma}, we have that $|B_t(u,u)| = 1$ if and only if $T_t(\theta) = 1$ for all $\theta \in \Lambda$ or $T_t(\theta) = -1$ for all $\theta \in \Lambda$. So by Lemma \ref{lem:Chebyshev_extrema}, if there is periodicity, every eigenvalue $\theta \in \Lambda$ can be written as cosine of a rational multiple of $\pi$.

Now assume that every $\theta \in \Lambda$ can be written in the form \eqref{eq:periodicity_theta_cosine} and let $\tau$ be the least common multiple of all $q(\theta)$ over $\theta \in \Lambda$. By construction, $\tau$ is the first time at which $T_\tau(\theta) = \pm 1$ for all $\theta \in \Lambda$. The sign of each $T_\tau(\theta)$ depends on the parity of $\tau \cdot \frac{p(\theta)}{q(\theta)}$. If the sign is the same for all $\theta \in \Lambda$, then there is periodicity with period $\tau$. If not, then from the properties of Chebyshev polynomials, we know that
\[
T_{2\tau}(\theta) = 2T_\tau(\theta)^2 - 1 = 1
\]
for all $\theta \in \Lambda$, so that periodicity happens with period $2\tau$.
\end{proof}

\begin{remark}
\label{rem:periodicity_char_simplified}
In Theorem \ref{thm:periodicity_characterisation}, if $\tau \cdot \frac{p(\theta)}{q(\theta)}$ is odd for every $\theta \in \Lambda$, then there is periodicity at time $\tau$ and $B_\tau(u,u) = -1$. For some models of two-reflection walks, this type of periodicity cannot occur at all: as is shown in \cite{GodZha2023} and \cite{GuoSch2024}, in both the arc-reversal walk and the vertex-face walk, periodicity can only occur with $B_\tau(u,u) = 1$. (The same argument as in Remark \ref{rem:gamma=1} can also be followed in order to see this.) Hence for such walks, the statement of Theorem \ref{thm:periodicity_characterisation} can be slightly simplified: in part (ii), the value $\tau \cdot \frac{p(\theta)}{q(\theta)}$ is always even. So we can write every $\theta \in \Lambda$ as a cosine of a rational multiple of $\pi$, in the following form:
\[
\theta = \cos\frac{2r(\theta) \pi}{s(\theta)},
\]
where $r(\theta) \geq 0$ and $s(\theta) > 0$ are integers such that $r(\theta) \leq \frac{1}{2}s(\theta)$ and $\gcd(r(\theta),s(\theta)) = 1$, and the period is always $\tau = \lcm\{s(\theta) \mid \theta \in \Lambda\}$.
\end{remark}

We note that our characterization of peak state transfer only requires as input the spectral decomposition of the projected transition matrix $B$; the time of peak state transfer is determined by the eigenvalues. 
We have reduced the problem to determining if some given numbers are equal to cosines of rational multiples of $\pi$, combined with some parity conditions on the entries of the spectral idempotents. While the former task could be difficult computationally, we will show, in Section \ref{sec:grids}, that it is possible to apply our theorems from this section, using results from algebraic number theory. 
If the characteristic polynomial of $B$ has rational coefficients, then we note that $\Lambda = \Lambda^{1} \cup \Lambda^{-1}$ is closed under taking algebraic conjugates. If $\Lambda$ contains only cosines of rational multiples of $\pi$, then we can partition $\Lambda$ into sets of algebraic conjugates. The minimal polynomials would have to divide the Chebyshev polynomial of the first kind. This is a way to detect when the rational multiples of $\pi$ occur with exact precision.

\section{Arc-reversal walks and lexicographic products}\label{sec:lex_products_arc_reversal}

In this section we consider only arc-reversal walks: for this specific type of two-reflection walk, we classify peak state transfer in the complete graphs and cycle graphs. We then show that from every graph $G$ that admits peak state transfer, we can obtain a larger graph that also admits peak state transfer by blowing up all vertices of $G$ (or equivalently, by taking the lexicographic product of $G$ with the empty graph $\overline{K_m}$).

In the following lemma, we classify peak state transfer (and periodicity) in the complete graphs $K_n$. As usual, we work with the spectral decomposition of the projected transition matrix
\[
B = \sum_{\theta \in \ev(B)} \theta E_\theta,
\]
where for the arc-reversal walk, $B$ is the normalized adjacency matrix of $G$, as described in Section \ref{sec:prelim}. As was also defined in Section \ref{sec:prelim}, we let $\Lambda(u,v)$ denote mutual eigenvalue support of vertices $u$ and $v$; recall that this set has a positive and a negative part.
\begin{lemma}
\label{lem:complete_graphs_peakST}
Let $n\geq 2$. The arc-reversal walk on the complete graph $K_n$ does not admit peak state transfer for $n > 2$ and there is no periodicity at any vertex for $n>3$. The graph $K_2$ admits perfect state transfer at time $1$ and is $2$-periodic everywhere. The graph $K_3$ is $3$-periodic everywhere.
\end{lemma}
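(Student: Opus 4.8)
The plan is to apply Theorem \ref{thm:PeakSTchar} and Theorem \ref{thm:periodicity_characterisation} directly, using the well-known spectral decomposition of the normalized adjacency matrix $B = \frac{1}{n-1}A_{K_n}$. Since $A_{K_n} = J - I$, we have $B = \frac{1}{n-1}(J - I)$, with eigenvalue $1$ (eigenspace spanned by $\ones$, spectral idempotent $E_1 = \frac{1}{n}J$) and eigenvalue $-\frac{1}{n-1}$ (eigenspace $\ones^\perp$, spectral idempotent $E_{-1/(n-1)} = I - \frac{1}{n}J$). So for any two distinct vertices $u,v$, the mutual eigenvalue support is $\Lambda(u,v) = \{1, -\frac{1}{n-1}\}$ (the two idempotent entries are $\frac{1}{n}$ and $-\frac{1}{n}$, both nonzero), with $\Lambda^1 = \{1\}$ and $\Lambda^{-1} = \{-\frac{1}{n-1}\}$; and the eigenvalue support of any single vertex $u$ is $\Lambda(u) = \{1, -\frac{1}{n-1}\}$ as well (both diagonal entries $\frac{1}{n}$ and $\frac{n-1}{n}$ are positive).

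The first step is the peak state transfer claim for $n > 2$. By Theorem \ref{thm:PeakSTchar}(i), peak state transfer from $u$ to $v$ requires $-\frac{1}{n-1}$ to be a cosine of a rational multiple of $\pi$. Niven's theorem (which is exactly Lemma \ref{lem:conway_jones}(i) applied after reducing to $[0,\pi/2]$, noting $\cos\theta$ rational and a rational multiple of $\pi$ forces $\cos\theta \in \{0, \pm\frac12, \pm 1\}$) tells us the only rational values of such cosines are $0, \pm\frac{1}{2}, \pm 1$. For $n > 2$ we have $-1 < -\frac{1}{n-1} < 0$, so it equals one of these rational values only when $-\frac{1}{n-1} = -\frac12$, i.e.\ $n = 3$. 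So for $n \geq 4$ condition (i) fails outright and there is no peak state transfer. For $n = 3$: here $-\frac12 = \cos\frac{2\pi}{3}$, so $p/q = 2/3$ for that eigenvalue and $1 = \cos\frac{0}{1}$, giving $\tau = \lcm\{1,3\} = 3$; then for $\theta = 1$, $\tau \cdot p/q = 0$ is even, while for $\theta = -\frac12$, $\tau \cdot p/q = 2$ is also even — so both eigenvalues land in $\Lambda^\gamma$ with $\gamma = +1$, but $-\frac12 \in \Lambda^{-1}$, contradicting condition (ii). Hence no peak state transfer for $n = 3$ either, completing the $n > 2$ case.

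Next, periodicity. By Theorem \ref{thm:periodicity_characterisation}, periodicity at $u$ requires every eigenvalue in $\Lambda(u) = \{1, -\frac{1}{n-1}\}$ to be a cosine of a rational multiple of $\pi$; by the same Niven argument this forces $-\frac{1}{n-1} \in \{0,\pm\frac12,\pm 1\}$, i.e.\ $n \in \{2,3\}$ (taking $n \geq 2$). For $n = 2$: $B = \frac{1}{1}(J-I)$ has eigenvalues $1 = \cos\frac{0}{1}$ and $-1 = \cos\frac{\pi}{1}$, so $\tau = \lcm\{1,1\} = 1$; the parities of $\tau p/q$ are $0$ (even) and $1$ (odd), which differ, so the period is $2\tau = 2$. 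Also $B_1(u,v) = T_1(B)(u,v) = B(u,v) = 1$ for $u\ne v$, which is perfect state transfer at time $1$ (and here $u,v$ are strongly cospectral since $E_\theta \Ze_u = \pm E_\theta \Ze_v$ holds in the $2$-dimensional space — directly checkable). For $n = 3$: eigenvalues $1 = \cos\frac{0}{1}$ and $-\frac12 = \cos\frac{2\pi}{3}$, so $\tau = \lcm\{1,3\} = 3$, and $\tau p/q$ equals $0$ and $2$, both even, so the period is exactly $\tau = 3$. For $n > 3$ condition (i) of Theorem \ref{thm:periodicity_characterisation} fails, so there is no periodicity at any vertex.

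I do not anticipate a genuine obstacle here: the entire argument is an application of the two characterization theorems combined with Niven's theorem on rational values of $\cos$ at rational multiples of $\pi$. The only point requiring slight care is the $n = 3$ peak state transfer computation, where condition (i) is satisfied but the parity condition (ii) fails — one must check that with $\tau = 3$ both eigenvalues give even values of $\tau p/q$, so the sign pattern forced by Theorem \ref{thm:PeakSTchar}(ii) would require both idempotent entries to have the same sign, contradicting $\sum_\theta E_\theta(u,v) = 0$ for $u \neq v$. Everything else is bookkeeping with a $2\times 2$ spectral decomposition.
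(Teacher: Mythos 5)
Your proof is correct and follows essentially the same route as the paper: compute the two-eigenvalue spectral decomposition of $B = \frac{1}{n-1}(J-I)$, invoke Lemma \ref{lem:conway_jones} to force $n \in \{2,3\}$, and then check the parity conditions of Theorems \ref{thm:PeakSTchar} and \ref{thm:periodicity_characterisation} in those two cases (including the failure of condition (ii) for $K_3$). Your idempotent $E_1 = \frac{1}{n}J$ is in fact the correct normalization (the paper writes $\frac{1}{n-1}J_n$, which appears to be a typo), though this does not affect either argument since only the signs of the entries matter.
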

\begin{proof}
If $A$ is the adjacency matrix of $K_n$, the eigenvalues of its projected transition matrix $B = \frac{1}{n-1}A$ are given by $1$ and $-\frac{1}{n-1}$, and the corresponding spectral idempotents are
\[
\frac{1}{n-1}J_n \quad \text{and} \quad I - \frac{1}{n-1}J_n
\]
respectively. For vertices $u,v$ of $K_n$, this means that $\Lambda(u,v) = \{1,-\frac{1}{n-1}\}$. By Lemma \ref{lem:conway_jones}, the eigenvalue $-\frac{1}{n-1}$ is only a cosine of a rational multiple of $\pi$ if $n \in \{2,3\}$. Hence for all other values of $n$, there is no perfect state transfer and no periodicity. For $n = 2$,
\[
B = \begin{bmatrix}
0 & 1 \\
1 & 0
\end{bmatrix},
\]
so there is peak state transfer at time $1$ and periodicity at both vertices at time $2$. For $n = 3$, we write the eigenvalues $1$ and $-\frac{1}{2}$ as rational multiples of cosines, in the form specified in Theorem \ref{thm:PeakSTchar} and Theorem \ref{thm:periodicity_characterisation}:
\[
1 = \cos\frac{0\cdot \pi}{1} \quad \text{and} \quad -\frac{1}{2} = \cos\frac{2\pi}{3}.
\]
We then take $\tau = \lcm\{1,3\} =3$. Since $\tau \cdot 0$ and $\tau \cdot \frac{2}{3}$ are both even, there is periodicity everywhere with period $3$ by Theorem \ref{thm:periodicity_characterisation}. However, for distinct vertices $u$ and $v$ of $K_3$, we have that $\Lambda^1(u,v) = \{1\}$ and $\Lambda^{-1}(u,v)= \{-\frac{1}{2}\}$. So condition (ii) of Theorem \ref{thm:PeakSTchar} is false and there is no peak state transfer at any time.
\end{proof}

Next, we discuss cycle graphs. Unlike the complete graphs, the cycle graphs do admit peak state transfer that is not perfect state transfer. In Lemma \ref{lem:cycle_eigenvectors}, we gave the spectral decomposition of the adjacency matrix of the cycle graph $C_n$. As is the case in that lemma, we will denote the vertices of $C_n$ by the elements of $\{0,\ldots,n-1\}$, where vertices $i$ and $j$ are adjacent if and only if $i - j = \pm 1 \slimmod n$. In the following lemma, we give a full characterization of peak state transfer in cycles; we note that periodicity and perfect state transfer in cycles has been determined in earlier work (see \cite{YalGed2015}).

\begin{lemma}
\label{lem:cycles_peakST}
Consider the arc-reversal walk on $C_n$ and let $i$ be any vertex. The walk is $n$-periodic at vertex $i$. If $n = 2m$, there is perfect state transfer from vertex $i$ to vertex $i + m \slimmod n$ at time $m$. If $n = 4\ell$, there is peak state transfer from $i$ to vertices $i \pm \ell \slimmod n$ at time $\ell$. There is no peak state transfer between any of the other pairs of vertices of $C_n$.
\end{lemma}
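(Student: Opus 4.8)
The plan is to apply Theorem~\ref{thm:PeakSTchar} and Theorem~\ref{thm:periodicity_characterisation} directly, using the explicit spectral decomposition of $B = \frac{1}{2}A_{C_n}$ provided by Lemma~\ref{lem:cycle_eigenvectors}. The eigenvalues of $B$ are $\theta_k = \cos\frac{2\pi k}{n}$ for $k = 0,\ldots,\lfloor n/2\rfloor$, and these are automatically cosines of rational multiples of $\pi$, so condition~(i) of both theorems is satisfied for every vertex and every pair of vertices. The spectral idempotent entries are given by \eqref{eq:cycle_idempotent_entries}: for $k \notin \{0, n/2\}$ we have $F_{n,k}(i,j) = \frac{2}{n}\cos\frac{2\pi k(i-j)}{n}$, and the extreme $k$ give rank-one idempotents with entries $\frac1n$ resp.\ $\frac1n(-1)^{i+j}$. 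So everything reduces to bookkeeping about (a) which $\theta_k$ lie in the eigenvalue support, (b) the sign of $F_{n,k}(i,j)$, and (c) the parity of $\tau\cdot\frac{p(\theta_k)}{q(\theta_k)}$ where $\tau = \lcm\{q(\theta_k)\}$ over the relevant support.

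**Periodicity.** For a fixed vertex $i$, the eigenvalue support $\Lambda(i)$ consists of those $\theta_k$ with $F_{n,k}(i,i) \neq 0$; since $F_{n,k}(i,i) = \frac2n > 0$ for middle $k$ and $\frac1n > 0$ for the extremes, we get $\Lambda(i) = \ev(B)$ — the full eigenvalue set. Writing $\theta_k = \cos\frac{2\pi k}{n}$ in lowest terms, $q(\theta_k) = n/\gcd(2k,n)$ for the relevant $k$; one checks the $\lcm$ of all these denominators is exactly $n$ (realized e.g.\ by $k=1$ when $n$ is odd, or by examining $k$ with $\gcd(2k,n)=1$ or $2$). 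Then $\tau\cdot\frac{p(\theta_k)}{q(\theta_k)} = \tau\cdot\frac{2k}{n}\cdot\frac{1}{?}$ works out so that, using Remark~\ref{rem:periodicity_char_simplified} (arc-reversal walks only have $B_\tau(u,u)=1$ periodicity), the relevant quantity is always even and the period is $\tau = n$. This gives $n$-periodicity at every vertex.

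**State transfer between distinct vertices.** Fix $i$ and a target $j \neq i$, and set $r = j - i \bmod n$. By symmetry (the walk is vertex-transitive) we may take $i=0$, $j=r$. The mutual eigenvalue support $\Lambda(0,r)$ is $\{\theta_k : F_{n,k}(0,r)\neq 0\}$, i.e.\ those $k$ with $\cos\frac{2\pi kr}{n} \neq 0$ (plus possible extreme terms), and the sign of $F_{n,k}(0,r)$ is the sign of $\cos\frac{2\pi kr}{n}$. Condition~(ii) of Theorem~\ref{thm:PeakSTchar} demands that there is $\gamma\in\{\pm1\}$ such that $\theta_k\in\Lambda^\gamma$ iff $\tau\cdot\frac{p(\theta_k)}{q(\theta_k)}$ is even. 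Since $1 = \theta_0 = \cos\frac{0}{1}$ always lies in $\Lambda^1(0,r)$ (its idempotent entry is $\frac1n>0$) and $\tau\cdot 0$ is even, Remark~\ref{rem:gamma=1} forces $\gamma = 1$: we need $\mathrm{sgn}\,\cos\frac{2\pi kr}{n}$ to match the parity of the integer $\tau k r/(\,\cdot\,)$ for every $k$ in the support, with $\tau$ the relevant $\lcm$. The computation splits by cases on $\gcd(r,n)$; the cleanest route is to note $\Lambda(0,r)$ and all the sign/parity data depend on $r$ only through the structure of $k\mapsto kr\bmod n$, and to show: (1) if $n=2m$ and $r=m$, then $\cos\frac{2\pi km}{n} = \cos(\pi k) = (-1)^k$, every $\theta_k$ is in the support, $\tau = n$, and one verifies condition~(ii) with $\gamma=1$, \emph{and} strong cospectrality holds (since $F_{n,k}e_0 = \pm F_{n,k}e_m$ as $F_{n,k}(0,\ell)$ vs $F_{n,k}(m,\ell)$ differ by $(-1)^{k}$-type factors), giving perfect state transfer at time $\tau/2 = m$ — here I should double-check that peak/perfect state transfer time is a \emph{half}-period using the "odd multiples of $\tau$" clause, reconciling $\tau$ as computed from $\Lambda(0,m)$ (which may be $m$, not $n$) with the claimed time $m$; (2) if $n = 4\ell$ and $r = \pm\ell$, then $\cos\frac{2\pi k\ell}{n} = \cos\frac{\pi k}{2}$, which is $0$ for odd $k$ (so only even $k$ are in the support), $(-1)^{k/2}$ for even $k$; recomputing $\tau$ over this restricted support gives $\tau = \ell$, condition~(ii) holds with $\gamma = 1$ giving peak state transfer at time $\ell$, but strong cospectrality fails (the odd-$k$ idempotents give $F_{n,k}e_0 \neq \pm F_{n,k}e_{\pm\ell}$ since one side is generically nonzero and... actually $F_{n,k}(0,\pm\ell) = \frac2n\cos\frac{\pi k}{2}=0$ for odd $k$ while $F_{n,k}(0,0)\neq 0$), so it is not perfect; (3) for all other pairs, produce two eigenvalues $\theta_k, \theta_{k'}$ in the support whose cosine-signs and denominator-parities are incompatible, so condition~(ii)(a)-(b) cannot be satisfied for either choice of $\gamma$ — this is where Lemma~\ref{lem:conway_jones} may be invoked if any $\theta_k$ in the support fails to be a "nice" cosine, though here all $\theta_k$ are already cosines of rational multiples of $\pi$, so the obstruction is purely the parity mismatch.

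**Main obstacle.** The delicate part is case~(3): showing that \emph{no} pair $(i,j)$ other than those listed works. The bookkeeping is: for $r \bmod n$ not equal to $m$ (when $n$ even) or $\pm\ell$ (when $n = 4\ell$), I must exhibit a parity/sign conflict among the $\{\theta_k : \cos\frac{2\pi kr}{n}\neq 0\}$. A clean way is to track how $q(\theta_k)$, hence the parity of $\tau\cdot p(\theta_k)/q(\theta_k)$, varies with $k$ versus how $\mathrm{sgn}\cos\frac{2\pi kr}{n}$ varies, and argue that these two $\{\pm1\}$-valued functions of $k$ on the support agree (up to global sign) only in the two exceptional configurations. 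Handling the extreme idempotents $k=0$ and $k=n/2$ separately, and the edge cases of small $n$ and $n \not\equiv 0 \pmod 4$, is routine but must be done carefully. I would also need to double-check the reconciliation of the "time $\tau$ vs.\ time $m$ or $\ell$" discrepancy mentioned above: the $\tau$ in Theorem~\ref{thm:PeakSTchar} is the $\lcm$ of denominators over the \emph{mutual} support $\Lambda(0,r)$, which is a strict subset of $\ev(B)$ in the peak (non-perfect) case, and that is exactly why the transfer time $\ell = \tau$ is smaller than the period $n$.
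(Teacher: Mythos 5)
Your setup, your treatment of periodicity, and your handling of the two positive cases ($n=2m$ and $n=4\ell$) follow essentially the same route as the paper: compute the entries $F_{n,k}(0,r)$, read off the positive and negative mutual eigenvalue supports, and check conditions (i) and (ii) of Theorem~\ref{thm:PeakSTchar} (with $\gamma=1$ forced by Remark~\ref{rem:gamma=1}). The ``reconciliation'' you flag in case (1) is not actually an issue: the mutual support of $0$ and $m$ is the full spectrum, the eigenvalues are $\cos\frac{k\pi}{m}$, and the $\lcm$ of the reduced denominators is exactly $m$, so $\tau=m$ is the transfer time directly from the theorem --- there is no half-period subtlety. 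Similarly your periodicity paragraph, with its literal ``$\frac{1}{?}$'', should be replaced by the observation (Remark~\ref{rem:periodicity_char_simplified}) that each eigenvalue is $\cos\frac{2k\pi}{n}$ and the period is the $\lcm$ of the denominators $n/\gcd(2k,n)$, which is $n$ because $k=1$ is in the support.

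The genuine gap is case (3), which you correctly identify as the main obstacle but then leave as a plan rather than a proof. Your proposed strategy --- exhibiting a parity/sign conflict between $k\mapsto\sgn\cos\frac{2\pi kr}{n}$ and $k\mapsto$ parity of $\tau p(\theta_k)/q(\theta_k)$ for every non-exceptional $r$ --- would require a delicate case analysis that you have not carried out, and it is not obvious it closes cleanly. The paper avoids this entirely with a short indirect argument: for $x\notin\{0,n/4,n/2,3n/4\}$ one has $F_{n,1}(0,x)\neq 0$, so $\cos\frac{2\pi}{n}$ lies in the mutual support and pins the candidate first time to $t=n$ ($n$ odd) or $t=n/2$ ($n$ even); but at that time there is periodicity at $0$ (resp.\ perfect state transfer from $0$ to $n/2$), so the unit column norm of $B_t$ forces $B_t(0,x)=0$, which is strictly less than $\sum_k|F_{n,k}(0,x)|\geq|F_{n,1}(0,x)|>0$. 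Hence no peak state transfer. Without this (or a completed version of your parity analysis), the final sentence of the lemma is unproven.
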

\begin{proof}
Throughout this proof, we may assume without loss of generality that $i=0$. If $A$ is the adjacency matrix of the $n$-cycle, then its projected transition matrix with respect to the arc-reversal walk is given by $B = A/2$. By Lemma \ref{lem:cycle_eigenvectors}, we know that the eigenvalues of $B$ are $\cos\frac{2\pi k}{n}$ for $k=0,\ldots,\lfloor n/2\rfloor$, with corresponding spectral idempotents $F_{n,0},\ldots,F_{n,\lfloor n/2\rfloor}$ as defined in \eqref{eq:cycle_idempotent_entries}. The eigenvalue support $\Lambda(u,u)$ of vertex $0$ is the whole spectrum of $B$. By Theorem \ref{thm:periodicity_characterisation}, since every eigenvalue of $B$ is a cosine of a rational multiple of $\pi$, there is periodicity at $0$; by Remark \ref{rem:periodicity_char_simplified}, because $\cos\frac{2\cdot 1 \cdot \pi}{n}$ is in the eigenvalue support, the period must be $n$.

Now assume that $n=2m$. For every $k$, we have
\[
F_{n,k}(0,m) = \begin{cases}
\frac{1}{n} &\text{if $k=0$};\\
(-1)^m\frac{1}{n} &\text{if $k=m$}; \\
\frac{1}{m}\cos k\pi &\text{otherwise.}
\end{cases}
\]
That is, $F_{n,k}(0,m) > 0$ for all even $k$ and $F_{n,k}(0,m) < 0$ for all odd $k$. In particular, the mutual eigenvalue support of $0$ and $m$ is the whole spectrum of $B$. Since $n$ is even, the eigenvalues can be written in the form
\[
\cos\frac{k\cdot\pi}{m}, \quad k\in\left\{0,\ldots,\left\lfloor \frac{n}{2}\right\rfloor\right\}
\]
Since we can take $k=1$, by Theorem \ref{thm:PeakSTchar}, if there is peak state transfer from $0$ to $m$, it must occur for the first time at time $m$. Now for all $k$:
\[
m \cdot \frac{2k}{n} = k,
\]
so that both conditions (i) and (ii) of Theorem \ref{thm:PeakSTchar}(ii) are satisfied and there is peak state transfer. This peak state transfer is perfect state transfer, because $0$ and $m$ are strongly cospectral with respect to $B$. Indeed, we have for all vertices $x$:
\[
F_{n,0}(x,m) = \frac{1}{n} = F_{n,0}(x,0) \quad \text{and} \quad F_{n,m}(x,m) = \frac{1}{n}\cdot (-1)^{x+m} = (-1)^m \cdot F_{n,m}(x,0)
\]
and for all $k \notin \{0,m\}$:
\[
F_{n,k}(x,m) = \frac{1}{m} \cos\frac{\pi k (x - m)}{m} = \frac{1}{m} \cos\left(\frac{\pi k x}{m} - \pi k \right) = (-1)^k \cdot F_{n,k}(x,0).
\]

Next, assume that $n = 4\ell$. Without loss of generality, it suffices to show that there is peak state transfer from vertex $0$ to vertex $\ell$. This time, we have for all $k$:
\begin{equation}
\label{eq:peakST_cycle_n=4l_entries}
F_{n,k}(0,\ell) = \begin{cases}
\frac{1}{n} &\text{if $k\in \{0,2\ell\}$};\\
\frac{1}{\ell}\cos \frac{k\pi}{2} &\text{otherwise.}
\end{cases}
\end{equation}
That is, the mutual eigenvalue support of $0$ and $\ell$ consists of all eigenvalues $\cos\frac{2\pi k}{n}$ for which $k$ is even. More specifically, $F_{n,k}(0,\ell) > 0$ if $k \equiv 0 \slimmod 4$ and $F_{n,k}(0,\ell) < 0$ if $k \equiv 2 \slimmod 4$. Now every eigenvalue in the mutual eigenvalue support can be written in the form
\[
\cos\frac{(k/2)\cdot \pi}{\ell},
\]
so that if peak state transfer occurs from $0$ to $\ell$, it must occur for the first time at time $m$. Finally, we find that
\[
\ell \cdot \frac{2k}{n} = \frac{k}{2},
\]
so that the parity of this integer depends on $k \slimmod 4$; once more, we can see that conditions (i) and (ii) of \ref{thm:PeakSTchar} are both satisfied.

Finally, consider for any $n > 2$ the pair of vertices $0$ and $x \in \{0,\ldots,n-1\}$ such that $x \notin \{0,n/4,n/2,3n/4\}$. Then
\[
F_{n,1}(0,x) = \frac{2}{n}\cos\frac{2\pi x}{n} \neq 0
\]
so that $\cos\frac{2\pi}{n}$ is in the mutual eigenvalue support of $0$ and $x$. Let
\[
t = \begin{cases}
n &\text{if $n$ is odd;} \\
\frac{n}{2} &\text{if $n$ is even.}
\end{cases}
\]
Since $t$ is the smallest integer that makes $t \cdot \frac{2 k}{n}$ integer for all $k$, we know by Theorem \ref{thm:PeakSTchar} that if there is peak state transfer from vertex $0$ to vertex $m$, it must occur for the first time at time $t$. However, there is periodicity at vertex $0$ at time $t = n$ if $n$ is odd and there is perfect state transfer from vertex $0$ to vertex $n/2$ at time $t = n/2$ if $n$ is even. In either case, since the columns of the matrix $B_t$ have norm at most $1$, it must be that $B_t(0,x) = 0$, whereas
\[
\sum_{k} |F_{n,k}(0,x)| \geq |F_{n,1}(0,x)| > 0 = B_t(0,x),
\]
which means that there is no peak state transfer. We conclude that there is no peak state transfer from $0$ to $x$ at any time.
\end{proof}

Using Theorem \ref{thm:PeakSTchar} to prove Lemma \ref{lem:cycles_peakST} somewhat obscures the simple structure of the arc-reversal walk on $C_n$. In fact, it is not difficult to show that the transition matrix $U$ acts as a permutation on the $2n$-arcs of the cycle, cyclically permuting each of the two sets of arcs that point in the same direction. With this perspective, it is easy to see that there is periodicity at time $n$ and perfect state transfer at time $n/2$ between pairs of opposite vertices if the cycle is even. This also implies that $B_t(0,t) = 1/2$ for all $t \notin \{0,n/2\}$. Hence, in order to show that there is peak state transfer from $0$ to $\ell = n/4$ at time $\ell$, it suffices to show that the absolute sum of all $F_{n,k}(0,\ell)$ as given in \eqref{eq:peakST_cycle_n=4l_entries} is equal to $1/2$. Note that `peak state transfer' does not mean that there is `more' state transfer from vertex $0$ to vertex $\ell$ than to other vertices; rather the upper bound $\sum_{k}|F_{k,n}(0,x)|$ is lower for $x = \ell$ than for other $x$, and that bound can be attained.

Consider the arc-reversal walk on some graph $G$ and let $m > 1$ be an integer. We can construct a new graph that is a \textsl{blow-up} of $G$: it has vertex set
\[
V(G) \times \{1,\ldots,m\}
\]
and $(u,a)\sim (v,b)$ if and only if $u\sim v$. In words, every vertex in $G$ becomes a coclique of size $m$ in the blow-up graph and there is a  copy of $K_{m,m}$ between two of those cocliques if there is an edge in $G$ between the corresponding vertices in the original graph. Alternatively, this blow-up graph is the \textsl{lexicographic product} of $G$ with the empty graph $\overline{K_m}$ on $m$ vertices. We will denote this graph by $G[\overline{K_m}]$, notation that is also used in e.g.\ \cite{GodRoy2001}. The adjacency matrix of $G[\overline{K_m}]$ is given by
\[
A_G \otimes J_m,
\]
so that the projected transition matrix (which is the normalized adjacency matrix) of $G[\overline{K_m}]$ is
\[
B' := \frac{1}{m} B \otimes J_m,
\]
with $B$ the projected transition matrix of $G$. The eigenvalues of $B'$ are the products of the eigenvalues of $B$ with $0$ and $1$. This means that $\ev(B') = \ev(B) \cup \{0\}$. The spectral decomposition of $B'$ is given by
\[
B' = 0 \cdot F_0 + \sum_{\theta \in \ev(B) \setminus \{0\}} \theta F_\theta,
\]
where
\begin{equation}
\label{eq:nonzero_idempotents_mG}
F_\theta := E_\theta \otimes \frac{1}{m}J_m
\end{equation}
for $\theta \neq 0$ and the idempotent $F_0$ for the eigenvalue $0$ is
\begin{equation}
\label{eq:zero_idempotent_mG}
F_0 :=
\begin{cases}
I \otimes (I - \frac{1}{m}J_m) + E_0 \otimes \frac{1}{m}J_m &\text{if $0 \in \ev(B)$}; \\
I \otimes (I - \frac{1}{m}J_m) &\text{otherwise.} \\
\end{cases}
\end{equation}
This leads to the following lemma.
\begin{lemma}
\label{lem:lex_product_peakST}
Consider the arc-reversal walk on some graph $G$. Let $u,v$ be distinct vertices of $G$, let $m > 1$ be an integer and let $a,b \in \{1,\ldots,m\}$. Then there is peak state transfer from $u$ to $v$ at time $t$ in $G$ if and only if there is peak state transfer from $(u,a)$ to $(v,b)$ at time $t$ in $G[\overline{K_m}]$.
\end{lemma}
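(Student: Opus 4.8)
The plan is to invoke Theorem~\ref{thm:PeakSTchar}, which reduces peak state transfer between two states to purely spectral data: the mutual eigenvalue support and its partition into positive and negative parts. So the whole argument amounts to comparing these data for the pair $u,v$ with respect to $B$ and for the pair $(u,a),(v,b)$ with respect to $B' = \frac1m B \otimes J_m$.

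First I would read off the entries of the spectral idempotents of $B'$ at the position $\bigl((u,a),(v,b)\bigr)$ from the formulas \eqref{eq:nonzero_idempotents_mG} and \eqref{eq:zero_idempotent_mG}. For $\theta \in \ev(B)\setminus\{0\}$ we have $F_\theta = E_\theta \otimes \frac1m J_m$, so $F_\theta\bigl((u,a),(v,b)\bigr) = \frac1m E_\theta(u,v)$, independently of $a,b$; in particular $F_\theta\bigl((u,a),(v,b)\bigr)$ and $E_\theta(u,v)$ have the same sign. For the eigenvalue $0$: since $u \neq v$, the identity matrix indexed by $V(G)$ vanishes at $(u,v)$, so the summand $I \otimes (I - \frac1m J_m)$ contributes $0$ at $\bigl((u,a),(v,b)\bigr)$; hence $F_0\bigl((u,a),(v,b)\bigr) = \frac1m E_0(u,v)$ when $0 \in \ev(B)$ and $F_0\bigl((u,a),(v,b)\bigr) = 0$ otherwise. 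In the latter case $0$ is still an eigenvalue of $B'$, but it contributes nothing to the support at $\bigl((u,a),(v,b)\bigr)$, matching the fact that $0 \notin \Lambda(u,v)$ when $0 \notin \ev(B)$.

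Combining these observations (with the convention $E_\theta := 0$ when $\theta \notin \ev(B)$), for every eigenvalue $\theta$ of $B'$ the sign of the spectral idempotent entry of $B'$ at $\bigl((u,a),(v,b)\bigr)$ equals the sign of $E_\theta(u,v)$. Consequently
\[
\Lambda^{1}\bigl((u,a),(v,b)\bigr) = \Lambda^{1}(u,v), \qquad \Lambda^{-1}\bigl((u,a),(v,b)\bigr) = \Lambda^{-1}(u,v),
\]
and therefore $\Lambda\bigl((u,a),(v,b)\bigr) = \Lambda(u,v)$. If this common set is empty, then (this being the zero state transfer case, excluded by convention; see Remark~\ref{rem:lambda_empty}) neither pair admits peak state transfer and the equivalence holds trivially. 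Otherwise, conditions (i) and (ii) of Theorem~\ref{thm:PeakSTchar} for the pair $(u,a),(v,b)$ are literally the same statements as for the pair $u,v$: they refer only to the eigenvalues lying in $\Lambda$ and to which of $\Lambda^{1},\Lambda^{-1}$ each belongs. Hence peak state transfer occurs for one pair if and only if it occurs for the other; and when it does, the integer $\tau = \lcm\{q(\theta)\mid\theta\in\Lambda\}$ is the same in both cases, so by the final sentence of Theorem~\ref{thm:PeakSTchar} peak state transfer occurs at exactly the same set of times (the odd multiples of $\tau$). In particular it occurs at time $t$ in $G$ if and only if it occurs at time $t$ in $G[\overline{K_m}]$.

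The only step requiring any care is the bookkeeping for the eigenvalue $0$: one must use $u \neq v$ to discard the $I \otimes (I - \frac1m J_m)$ summand of $F_0$ and then verify that both branches of \eqref{eq:zero_idempotent_mG} put $0$ into $\Lambda\bigl((u,a),(v,b)\bigr)$ precisely when $0 \in \Lambda(u,v)$. There is no genuine obstacle beyond this.
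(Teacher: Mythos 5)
Your proposal is correct and follows essentially the same route as the paper: both arguments reduce the claim to showing, via the explicit idempotent formulas \eqref{eq:nonzero_idempotents_mG} and \eqref{eq:zero_idempotent_mG} and the fact that $u \neq v$ kills the $I \otimes (I - \tfrac1m J_m)$ summand of $F_0$, that the positive and negative mutual eigenvalue supports of $(u,a),(v,b)$ in $G[\overline{K_m}]$ coincide with those of $u,v$ in $G$, and then invoking Theorem~\ref{thm:PeakSTchar}. Your write-up is slightly more explicit than the paper's about why matching supports also force the same set of transfer times (same $\tau$), but the substance is identical.
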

\begin{proof}
It suffices to show that $B$ and $B'$ as described above have identical positive and negative eigenvalue supports. Let $\theta$ be any eigenvalue of the projected transition matrix $B$ of $G$. By equations \eqref{eq:nonzero_idempotents_mG} and \eqref{eq:zero_idempotent_mG}, we have that
\[
F_\theta((u,a),(v,b)) = \frac{1}{m}E_\theta(u,v)
\]
(also if $\theta = 0$, since $u \neq v$)
so that $\theta$ is in the positive (resp.\ negative) eigenvalue support of $(u,a)$ and $(v,b)$ with respect to $B'$ if and only if it is in the positive (resp.\ negative) eigenvalue support of $u$ and $v$ with respect to $B$.

Now assume that $0$ is not an eigenvalue of the original projected transition matrix $B$, then since $u \neq v$, we have $F_0((u,a),(v,b)) = 0$ by \eqref{eq:zero_idempotent_mG}, so that $0$ is not in the mutual eigenvalue support of $(u,a)$ and $(v,b)$. This completes the proof.
\end{proof}

In the following lemma, we show that periodicity in $G$ can also lead to peak state transfer in the blow-up $G[\overline{K_m}]$ of $G$. The proof is straightforward, but rather long, once more because of the careful bookkeeping that is necessary. Therefore, we have moved its proof into Appendix \ref{app:proofs}.

\begin{lemma}
\label{lem:lex_product_periodicity}
Assume that the arc-reversal walk on some connected graph $G$ with at least two vertices is $\tau$-periodic at some vertex $u$. Moreover, let $m > 1$ be an integer and let $a,b \in \{1,\ldots,m\}$ be distinct. Write $\tau' = \lcm\{\tau,4\}$. The following are true:
\begin{enumerate}[(i)]
\item There is periodicity at $(u,a)$ with period $\tau'$ in the arc-reversal walk on $G[\overline{K_m}]$.
\item There is peak state transfer from $(u,a)$ to $(u,b)$ at time $\tau'/2$ in the arc-reversal walk on $G[\overline{K_m}]$ if and only if one of the following holds:
\begin{enumerate}[(a)]
\item $\tau$ is not divisible by $4$;
\item $\tau \equiv 4 \mod 8$ and the integer $\tau \cdot \frac{r(\theta)}{s(\theta)}$ is even for all eigenvalues $\theta \in \Lambda(u,u) \setminus \{0\}$ of the projected transition matrix of $B$, where $r(\theta)$ and $s(\theta)$ are as in Remark \ref{rem:periodicity_char_simplified}.
\end{enumerate}
\end{enumerate}
Conversely, if there is periodicity at $(u,a)$ or peak state transfer from $(u,a)$ to $(u,b)$ in $G[\overline{K_m}]$, there must be periodicity at $u$ in $G$.
\end{lemma}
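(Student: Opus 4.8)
The plan is to compute everything from the explicit spectral decomposition of the projected transition matrix $B' = \frac1m B\otimes J_m$ of $G[\overline{K_m}]$ recorded just above the lemma, where $B$ is the projected transition matrix of $G$ with spectral decomposition $B = \sum_\theta \theta E_\theta$. The first step is to read off the relevant eigenvalue supports in the blow-up. From $F_\theta((u,a),(v,b)) = \frac1m E_\theta(u,v)$ for $\theta\neq 0$ and the two displayed formulas for $F_0$, using $m>1$ and the fact that a vertex of a connected graph on at least two vertices is not isolated (so $E_0(u,u)<1$, which keeps the off-diagonal entry of $F_0$ strictly negative), one gets
\[
\Lambda((u,a)) = \bigl(\Lambda(u)\setminus\{0\}\bigr)\cup\{0\},\qquad
\Lambda^1((u,a),(u,b)) = \Lambda(u)\setminus\{0\},\quad \Lambda^{-1}((u,a),(u,b)) = \{0\},
\]
the last two for $a\neq b$. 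The sign flip at $\theta=0$ is the structural heart of the argument. I would also note that $1\in\Lambda(u)\setminus\{0\}$ and $T_t(1)=1$ for every $t$, so the sign $\gamma$ in Theorem \ref{thm:PeakSTchar} can only be $+1$ (equivalently, $G[\overline{K_m}]$ is connected, so this also follows from Remark \ref{rem:gamma=1}).

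For part (i) I would apply Theorem \ref{thm:periodicity_characterisation}, in the simplified form of Remark \ref{rem:periodicity_char_simplified} (valid for arc-reversal walks), at the vertex $(u,a)$. Every $\theta\in\Lambda((u,a))$ is a cosine of a rational multiple of $\pi$: for $\theta\in\Lambda(u)$ this is the hypothesis that $G$ is $\tau$-periodic at $u$, and $0=\cos(\pi/2)$. The period at $(u,a)$ is the least common multiple of the denominators $s(\theta)$ over $\Lambda((u,a))$; compared to $G$ the only new contribution is from $\theta=0$, whose denominator is $4$, so the period equals $\lcm(\tau,4)=\tau'$. The same support computation also disposes of the converse statement: if there is periodicity at $(u,a)$ (resp.\ peak state transfer from $(u,a)$ to $(u,b)$) in $G[\overline{K_m}]$, then Theorem \ref{thm:periodicity_characterisation} (resp.\ Theorem \ref{thm:PeakSTchar}) forces every $\theta$ in the corresponding support to be a cosine of a rational multiple of $\pi$; since that support contains $\Lambda(u)\setminus\{0\}$ and $0$ is harmless, every $\theta\in\Lambda(u)$ is such a cosine, so $G$ is periodic at $u$ by Theorem \ref{thm:periodicity_characterisation}.

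For part (ii) I would go back to Lemma \ref{lem:technical_lemma} rather than invoke Theorem \ref{thm:PeakSTchar} wholesale: peak state transfer from $(u,a)$ to $(u,b)$ at time $t$ holds iff $T_t(\theta)=\pm1$ for all $\theta$ in the mutual support and all the products $T_t(\theta)F_\theta((u,a),(u,b))$ share a sign, which by the support computation and $T_t(1)=1$ says exactly that $T_t(\theta)=1$ for all $\theta\in\Lambda(u)\setminus\{0\}$ and $T_t(0)=-1$. Writing each $\theta\in\Lambda(u)$ as $\cos\frac{2r(\theta)\pi}{s(\theta)}$ as in Remark \ref{rem:periodicity_char_simplified} and setting $\nu=\lcm\{s(\theta):\theta\in\Lambda(u)\setminus\{0\}\}$, the first condition is equivalent to $\nu\mid t$ and the second to $t\equiv 2\pmod{4}$. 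Hence peak state transfer occurs iff some $t$ satisfies both, which happens iff $4\nmid\nu$, and when it does the smallest such $t$ is $2\nu$ if $\nu$ is odd and $\nu$ if $\nu\equiv 2\pmod{4}$ — both equal to $\lcm(\nu,4)/2=\tau'/2$ — after which it recurs at all odd multiples of $\tau'/2$, so ``peak state transfer at time $\tau'/2$'' is equivalent to ``peak state transfer at all''. It then remains to see that $4\nmid\nu$ is equivalent to (a) or (b). Since $\nu\mid\tau$, (a) (i.e.\ $4\nmid\tau$) forces $4\nmid\nu$. Conversely, if $4\mid\tau$ then, since $\tau=\nu$ when $0\notin\Lambda(u)$ and $\tau=\lcm(\nu,4)$ when $0\in\Lambda(u)$, we must have $0\in\Lambda(u)$ and $v_2(\tau)=\max(v_2(\nu),2)$; so $4\nmid\nu$ forces $v_2(\tau)=2$, i.e.\ $\tau\equiv 4\pmod{8}$, together with $v_2(s(\theta))\le 1$ for every $\theta\in\Lambda(u)\setminus\{0\}$, and a $2$-adic valuation count of $\tau\cdot r(\theta)/s(\theta)=r(\theta)\cdot(\tau/s(\theta))$ shows this last condition is exactly the parity condition of (b).

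The fiddly part — and the reason the proof is deferred to an appendix — is the arithmetic in (ii): keeping the two normal forms for eigenvalue-cosines ($\cos\frac{p\pi}{q}$ versus $\cos\frac{2r\pi}{s}$) straight, tracking $2$-adic valuations to identify ``$4\nmid\nu$'' with (a) or (b), and pinning the first peak-state-transfer time to $\tau'/2$ rather than some other small multiple of $\nu$. None of it is deep, but it is easy to slip on, so I would fix the notation $v_2(\cdot)$ from the outset and keep explicit track of which eigenvalues — essentially only $0$, together with those nonzero $\theta$ whose denominator $s(\theta)$ is divisible by $4$ — can contribute a factor of $4$ to the least common multiples involved.
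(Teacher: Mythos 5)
Your proposal is correct and follows essentially the same route as the paper: the same computation of the supports $\Lambda^{\pm1}((u,a),(u,b))$ and $\Lambda((u,a))$ from the explicit idempotents of $\frac1m B\otimes J_m$, the same appeal to Theorem \ref{thm:periodicity_characterisation} / Remark \ref{rem:periodicity_char_simplified} for part (i) and for the converse, and the same Chebyshev-sign analysis for part (ii). The only difference is organizational: you package the arithmetic of (ii) through $\nu=\lcm\{s(\theta):\theta\in\Lambda(u)\setminus\{0\}\}$ and $2$-adic valuations, whereas the paper converts to the $\cos(p\pi/q)$ normal form and does a direct case split on $\tau \bmod 8$; both reduce to the same parity conditions and the same first time $\tau'/2$.
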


A consequence of Lemmas \ref{lem:lex_product_peakST} and \ref{lem:lex_product_periodicity} is that if there is no peak state transfer or periodicity in $G$, then there is no peak state transfer or periodicity in the blow-up graph $G[\overline{K_m}]$ either. Since the only complete graphs for which the arc-reversal walk admits perfect state transfer or periodicity are $K_2$ and $K_3$, we can conclude that the only complete multipartite graphs with $r$ parts of size $m$ that admit periodicity and peak state transfer are the bipartite graphs $K_{m,m}$ and the tripartite graphs $K_{m,m,m}$. We will make use of this when we classify peak state transfer in strongly regular graphs in Section \ref{subsec:SRGs}.

\section{More examples of peak state transfer in arc-reversal walks}\label{sec:examples}
We will begin this section by expanding on the notion of zero state transfer that was introduced in Section \ref{sec:peakst}. We then classify peak state transfer in arc-reversal walks on strongly regular graphs (Section \ref{subsec:SRGs}) and partly in block designs (Section \ref{subsec:blockdesigns}). We also give a family of graphs for which the amount of peak state transfer (that is, the value of $\sum_{\theta}|E_\theta(u,v)|$) tends to $1$ (Section \ref{subsec:peakST_arc_reversal_approaches_1}). Finally, we briefly discuss peak state transfer in other examples that arise from certain incidence structures (Section \ref{subsec:other_examples}). 

For most of our examples, we do not compute the value of $\sum_{\theta} |E_\theta(u,v)|$. The symmetric nature of our examples makes it so that this amount is usually quite low: since the columns of $B_t$ have norm at most $1$, if there is peak state transfer from a vertex $u$ to $m > 0$ other vertices at time $t$ with equal amounts, then this amount is at most $1/\sqrt{m}$. That does not mean that the amount of peak state transfer is always low, as we will see in the infinite family where this quantity tends to $1$. The graph that we saw in Figure \ref{fig:high_peakST}  provides another small example: its  arc-reversal walk admits peak state transfer from $u$ to $v$ with $B_6(u,v) = \frac{1}{2}\sqrt{3} \approx 0.866025$.  

\subsection{Zero state transfer}\label{subsec:no-st}

As defined in Section \ref{sec:peakst}, for the transition matrix $U = (2MM^T - I)(2NN^T - I)$ of a two-reflection walk, let $B_t = N^T U^t N$ for all $t \in \ints_{\geq 0}$ and write 
\[
B \coloneq B_1 = \sum_{\theta \in \ev(B)} \theta E_\theta
\]
for the spectral decomposition of the projected transition matrix. As mentioned earlier in Remark \ref{rem:lambda_empty}, if $E_\theta(u,v) = 0$ for all $\theta \in \Lambda(u,v)$, which we called `zero state transfer', then $B_t(u,v) = 0$ for all $t$. Here we give a short proof of the equivalence of these two conditions.
\begin{lemma}
\label{lem:zero_state_transfer}
There is zero state transfer from $u$ to $v$ if and only if $B_t(u,v) = 0$ for all $t$.
\end{lemma}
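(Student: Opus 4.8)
The statement is an "if and only if," and both directions hinge on the Chebyshev expansion $B_t = T_t(B) = \sum_\theta T_t(\theta) E_\theta$, so that $B_t(u,v) = \sum_\theta T_t(\theta) E_\theta(u,v)$. The forward direction is essentially immediate: if $E_\theta(u,v) = 0$ for every $\theta \in \ev(B)$ — equivalently $\Lambda(u,v) = \emptyset$ — then every term in that sum vanishes, so $B_t(u,v) = 0$ for all $t \geq 0$. (This was already noted in Remark~\ref{rem:lambda_empty} as a consequence of Lemma~\ref{lem:technical_lemma}; it also follows directly here.)

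For the converse, suppose $B_t(u,v) = 0$ for all $t \geq 0$. The plan is to show that this forces $E_\theta(u,v) = 0$ for every eigenvalue $\theta$. Writing $c_\theta := E_\theta(u,v)$ and $\ev(B) = \{\theta_1,\ldots,\theta_k\}$ (distinct eigenvalues), the hypothesis gives
\[
\sum_{j=1}^k T_t(\theta_j)\, c_{\theta_j} = 0 \qquad \text{for all } t \geq 0.
\]
Since $T_0, T_1, \ldots, T_{k-1}$ are polynomials of degrees $0, 1, \ldots, k-1$, they form a basis for the space of polynomials of degree $< k$; in particular every monomial $x^i$ with $i < k$ is a linear combination of them, so we also get $\sum_j \theta_j^{\,i}\, c_{\theta_j} = 0$ for $i = 0, 1, \ldots, k-1$. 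This is a Vandermonde system in the unknowns $c_{\theta_1},\ldots,c_{\theta_k}$: the coefficient matrix $(\theta_j^{\,i})_{0 \le i \le k-1,\ 1 \le j \le k}$ has nonzero determinant $\prod_{i<j}(\theta_j - \theta_i)$ because the $\theta_j$ are distinct. Hence $c_{\theta_j} = 0$ for all $j$, i.e. $E_\theta(u,v) = 0$ for every $\theta$, which is exactly zero state transfer from $u$ to $v$.

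I do not expect any genuine obstacle here; the only things to be careful about are (i) invoking $B_t = T_t(B)$ only for $t \geq 1$ and handling $t = 0$ separately (where $B_0 = I$ gives $E_{u,v}$-independent information, namely $[u=v]$, consistent with $u \neq v$ implicitly or trivially otherwise), and (ii) making sure the $\theta_j$ used are the \emph{distinct} eigenvalues so that the Vandermonde matrix is genuinely invertible. One could alternatively phrase the converse via the generating function / uniqueness of the spectral decomposition, but the finite Vandermonde argument is the cleanest and most self-contained.
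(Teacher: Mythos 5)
Your proof is correct and follows essentially the same route as the paper's: both arguments rest on the fact that $T_0,\dots,T_{k-1}$ span the polynomials of degree less than $k$, combined with the distinctness of the eigenvalues $\theta_j$ to invert the resulting linear system (the paper does this by writing each $E_\theta$ as the Lagrange interpolation polynomial $p_\theta(B)$ and expanding $p_\theta$ in Chebyshev polynomials, while you invert the equivalent Vandermonde system directly). Your extra care about the $t=0$ case is fine and does not change the substance.
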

\begin{proof}
It suffices to prove the reverse implication. Every idempotent $E_\theta$ can be written as a polynomial in $B$: we can write $E_\theta = p_\theta(B)$, where
\[
p_\theta(x) = \prod_{\rho \neq \theta}\frac{x - \rho}{\theta - \rho}.
\]
Moreover, every $p_\theta$ can in turn be written as a linear combination of Chebyshev polynomials, so that every $E_\theta$ is a linear combination of the $B_t = T_t(B)$. Hence if $B_t(u,v) = 0$ for all $t$, then $E_\theta(u,v) = 0$ for all $\theta$ as well.
\end{proof}

In the arc-reversal walk, zero state transfer occurs between all pairs of vertices in different components of the underlying graph. We will now argue that it cannot occur between vertices that are in the same connected component of the graph, by proving a slightly more general statement for a certain class of two-reflection walks.

An $n \times n$ matrix $M$ over any field is called \textsl{reducible} if there exists a non-trivial partition $S \cup T$ of its index set such that
\begin{equation}
\label{eq:reducible_partition}
M(i,j) = 0 \quad \text{for all $(i,j) \in S \times T$}.
\end{equation}
(Equivalently, $M$ is reducible if there is a permutation matrix $P$ such that $PMP^T$ is block upper-triangular (with more than one block), in particular block diagonal if $M$ is symmetric.) If no such partition exists, we say that $M$ is \textsl{irreducible}. If $M$ is irreducible and nonnegative (i.e.\ all its entries are nonnegative), it satisfies the necessary conditions for applying the Perron-Frobenius theorem (see \cite{BroHae2012}).

Now we claim the following. Assume that there exist pairwise distinct real values $a_\theta$ such that
\[
A \coloneq \sum_{\theta \in \ev(B)} a_\theta E_\theta
\]
is a nonnegative matrix, where the $E_\theta$ are the spectral idempotents of $B$. Then zero state transfer  occurs in the two-reflection walk if and only if $B$ is reducible. The projected transition matrix of the arc-reversal walk is itself nonnegative, which makes it a two-reflection walk to which our claim applies. Similarly, for the vertex-face walk we have that
\[
DD^T = \frac{1}{2}(B + I) = \sum_{\theta} \frac{1}{2}(\theta + 1)E_\theta
\]
is nonnegative, so the claim also applies there. Each entry of $DD^T$ is denoted by a pair of vertices and since $D$ is the normalized vertex-face incidence matrix, that entry is non-zero if and only if the corresponding vertices share a face of the embedding. The connected nature of the underlying surface makes it so that for every pair of vertices $u$ and $v$, there is a sequence $u = v_0,v_1,\ldots,v_{k} = v$ such that the $(v_{i-1},v_{i})$-entry of $DD^T$ is non-zero for every $i=1,\ldots,k$. In other words, $DD^T$ (as well as $B$) is also irreducible and zero state transfer does not occur in any vertex-face walk.

To prove the claim, assume that such a nonnegative matrix $A$ exists. Note that, as mentioned in the proof of Lemma \ref{lem:zero_state_transfer}, every $E_\theta$ is a polynomial in $B$, so that if $B$ is reducible with $X = S\cup T$ a nontrivial partition of the index set as in \eqref{eq:reducible_partition}, then $E_\theta(u,v) = 0$ for all $\theta$ and all $(u,v) \in S \times T$, i.e., there is zero state transfer between all such $u$ and $v$.
Conversely if $B$ is irreducible, then $A$ is also irreducible, since it has the same spectral idempotents. Because $A$ is nonnegative, the Perron-Frobenius theorem applies: the largest eigenvalue of $A$ (in absolute value) has multiplicity $1$, and the corresponding rank-1 spectral idempotent is a positive matrix. Every entry of this idempotent is non-zero, so that there is no zero state transfer between any pair of indices.

It is possible for zero state transfer to occur even if $B$ is irreducible, but in such case, no linear combination $\sum_\theta a_\theta E_\theta$ (with distinct $a_\theta$) can be nonnegative. As an example, consider the following two-reflection walk, which can be considered as a `signed' arc-reversal walk on the $4$-cycle. Let
\[
U = (2MM^T - I)(2NN^T - I),
\]
where
\[
N = \frac{1}{\sqrt{2}}\begin{bmatrix}
1 & 0 & 0 & 0 \\
1 & 0 & 0 & 0 \\
0 & 1 & 0 & 0 \\
0 & 1 & 0 & 0 \\
0 & 0 & 1 & 0 \\
0 & 0 & 1 & 0 \\
0 & 0 & 0 & 1 \\
0 & 0 & 0 & 1
\end{bmatrix}
\quad \text{and} \quad
M =
\frac{1}{\sqrt{2}}
\begin{bmatrix}
1 & 0 & 0 & 0 \\
0 & 0 & 0 & 1 \\
0 & 1 & 0 & 0 \\
-1 & 0 & 0 & 0 \\
0 & 0 & 1 & 0 \\
0 & 1 & 0 & 0 \\
0 & 0 & 0 & 1 \\
0 & 0 & 1 & 0.
\end{bmatrix}
\]
This is similar to the arc-reversal walk on $C_4$, except that one of the arcs has been given a negative `sign'. See Figure \ref{fig:signed_arc_reversal_C4}. The associated projected transition matrix is given by
\[
B = N^T (2LL^T - I)N = \frac{1}{2}\begin{bmatrix}
0 & -1 & 0 & 1 \\
-1 & 0 & 1 & 0 \\
0 & 1 & 0 & 1 \\
1 & 0 & 1 & 0
\end{bmatrix}
\]
Its spectral idempotents are
\[
E_{\theta} = \frac{1}{4}\begin{bmatrix}
2 & \sqrt{2} & 0 & -\sqrt{2} \\
\sqrt{2} & 2 & -\sqrt{2} & 0 \\
0 & -\sqrt{2} & 2 & -\sqrt{2} \\
-\sqrt{2} & 0 & -\sqrt{2} & 2
\end{bmatrix} \quad \text{and} \quad E_\rho = \frac{1}{4}\begin{bmatrix}
2 & -\sqrt{2} & 0 & \sqrt{2} \\
-\sqrt{2} & 2 & \sqrt{2} & 0 \\
0 & \sqrt{2} & 2 & \sqrt{2} \\
\sqrt{2} & 0 & \sqrt{2} & 2
\end{bmatrix},
\]
corresponding to eigenvalues $\theta = -\frac{1}{2}\sqrt{2}$ and $\rho = \frac{1}{2}\sqrt{2}$. We can see that there is zero state transfer between the pairs $v_1,v_3$ and $v_2,v_4$, even though $B$ is irreducible.

\begin{figure}
    \centering
    \begin{tikzpicture}[scale=.8]
    	\begin{pgfonlayer}{nodelayer}
    		\node [label=above:$v_1$] (0) at (0, 2) {};
    		\node [label=right:$v_2$] (1) at (2, 0) {};
    		\node [label=below:$v_3$] (2) at (0, -2) {};
    		\node [label=left:$v_4$](3) at (-2, 0) {};
    	\end{pgfonlayer}
    	\begin{pgfonlayer}{edgelayer}
    		\draw [style=directed, thick, color=black, bend right=15] (0.center) to node [pos = 0.6, left=2pt] {$-1$} (1.center);
    		\draw [style=directed, thick, color=black, bend right=15] (1.center) to node [pos = 0.6, above=2pt] {$1$} (2.center);
    		\draw [style=directed, thick, color=black, bend right=15] (2.center) to node [pos = 0.6, right=2pt] {$1$} (3.center);
    		\draw [style=directed, thick, color=black, bend right=15] (3.center) to node [pos = 0.6, below=2pt] {$1$} (0.center);
    		\draw [style=directed, thick, color=black, bend right=15] (0.center) to node [pos = 0.6, above=2pt] {$1$} (3.center);
    		\draw [style=directed, thick, color=black, bend right=15] (3.center) to node [pos = 0.6, left=2pt] {$1$} (2.center);
    		\draw [style=directed, thick, color=black, bend right=15] (2.center) to node [pos = 0.6, below=2pt] {$1$} (1.center);
    		\draw [style=directed, thick, color=black, bend right=15] (1.center) to node [pos = 0.6, right=2pt] {$1$} (0.center);
    	\end{pgfonlayer}
            \filldraw [black]
                (0) circle (2.5pt)
                (1) circle (2.5pt)
                (2) circle (2.5pt)
                (3) circle (2.5pt);
    \end{tikzpicture}
    \caption{The `signed' arc-reversal walk has zero state transfer between the antipodal pairs of vertices, even though the underlying graph is connected.\label{fig:signed_arc_reversal_C4}}
\end{figure}
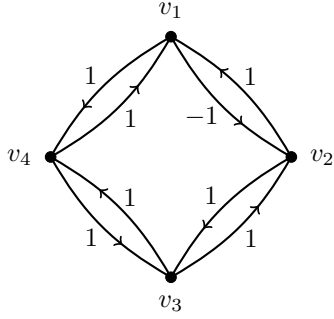

\subsection{Strongly regular graphs}
\label{subsec:SRGs}

A strongly regular graph $X$ with parameter set $(n,k,a,c)$ is a $k$-regular graph on $n$ vertices where every pair of adjacent vertices has $a$ common neighbours and every pair of non-adjacent vertices has $c$ common neighbours. If $X$ is connected, then $X$ has diameter $2$. We refer to \cite{GodRoy2001} for the basic of strongly regular graphs, some of which we will repeat here, in order to establish notation. 

For a strongly regular graph $X$, the adjacency matrix $A(X)$ has three distinct eigenvalues, $k\geq  \theta >\tau$, where  the first inequality is strict if and only if $X$ is connected. (Note: in this section on strongly regular graphs, we let $\theta$ denote the specific `middle' eigenvalue of a strongly regular graph and we use $\lambda$ to denote an arbitrary eigenvalue.)

We will make use of the  following lemma that characterizes disconnected strongly regular graphs.

\begin{lemma}[\cite{GodRoy2001}, Lemma 10.1.1]
\label{lem:godsil_royle_imprimitive}
Let $X$ be a strongly regular graph. The following conditions are equivalent. 
\begin{enumerate}[(i)]
\item $X$ is disconnected;
\item $c = 0$;
\item $a = k-1$;
\item $X$ is isomorphic to the disjoint union of $r$ copies of $K_{k+1}$ for some $r > 1$.
\end{enumerate}
\end{lemma}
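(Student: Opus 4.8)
The plan is to prove the cycle of implications $(i)\Rightarrow(ii)\Rightarrow(iii)\Rightarrow(iv)\Rightarrow(i)$. Two of these links are immediate: if $X$ is disconnected, any two vertices $u,v$ lying in different components are non-adjacent and have no common neighbour, so $c=0$, which gives $(i)\Rightarrow(ii)$; and a disjoint union of $r>1$ copies of $K_{k+1}$ is disconnected by definition, which gives $(iv)\Rightarrow(i)$. Throughout I would use the standing convention (in force in \cite{GodRoy2001}) that a strongly regular graph is neither complete nor edgeless, so that $1\le k\le n-2$ and both adjacent and non-adjacent pairs of vertices exist; this convention is what makes the statement literally correct, since it rules out $X=K_{k+1}$, which satisfies $a=k-1$ but is connected.

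For $(ii)\Rightarrow(iii)$ I would invoke the familiar counting identity
\[
k(k-a-1)=(n-k-1)\,c,
\]
proved by fixing a vertex $u$ and double-counting the edges between $N(u)$ and $V(X)\setminus(N(u)\cup\{u\})$: each of the $k$ neighbours of $u$ sends $k-1-a$ such edges (its remaining neighbours being $u$ and exactly $a$ vertices of $N(u)$), while each of the $n-k-1$ non-neighbours of $u$ sends exactly $c$. If $c=0$ the right-hand side vanishes, and since $k\ge 1$ we conclude $a=k-1$.

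The heart of the argument is $(iii)\Rightarrow(iv)$. Assume $a=k-1$, fix any vertex $u$, and pick any $v\in N(u)$. The $a=k-1$ common neighbours of $u$ and $v$ all lie inside $N(u)\setminus\{v\}$, a set of exactly $k-1$ vertices; hence $v$ is adjacent to every vertex of $N(u)\setminus\{v\}$. Adding the edge $uv$, this exhausts the $k$ neighbours of $v$, so $\{v\}\cup N(v)=\{u\}\cup N(u)$. Thus $\{u\}\cup N(u)$ is a clique of order $k+1$ in which every vertex has all of its neighbours inside the clique, i.e.\ it is a connected component isomorphic to $K_{k+1}$. Since $u$ was arbitrary, $X$ is a disjoint union of copies of $K_{k+1}$, and because $X$ is not complete there must be $r>1$ of them.

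The only genuine obstacle is this last step, and even there the work is purely combinatorial bookkeeping — counting the neighbours of $v$ carefully enough to see that the closed neighbourhood of $u$ closes up into a component. The remaining care needed is just to keep the degenerate cases in view: the identity used for $(ii)\Rightarrow(iii)$ requires $k\ge 1$, and the conclusion $r>1$ in $(iii)\Rightarrow(iv)$ requires $X$ to be non-complete, both of which are guaranteed by the convention on strongly regular graphs.
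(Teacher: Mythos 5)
The paper does not prove this lemma; it simply imports it as Lemma 10.1.1 of Godsil--Royle, so there is no internal proof to compare against. Your argument is correct and is essentially the standard one from that source: the counting identity $k(k-a-1)=(n-k-1)c$ gives $(ii)\Rightarrow(iii)$, and the closed-neighbourhood argument showing $\{u\}\cup N(u)$ closes up into a $K_{k+1}$-component gives $(iii)\Rightarrow(iv)$. You are also right to flag the convention excluding complete and empty graphs, since without it $K_{k+1}$ itself would be a connected counterexample to $(iii)\Rightarrow(i)$.
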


The complement $\overline{X}$ of a strongly regular graph $X$ is also strongly regular, with parameter set $(n,\overline{k},\overline{a},\overline{c})$, where
\begin{equation}
\label{eq:srg_complement_parameters}
\overline{k} = n-k-1, \quad \overline{a} = n-2-2k+c \quad \text{and} \quad \overline{c} = n-2k+a
\end{equation}
and with eigenvalues
\begin{equation}
\label{eq:srg_complement_eigenvalues}
n-k-1 \geq -1 -\tau > -1 - \theta.
\end{equation}
If $X$ and its complement $\overline{X}$ are both connected, we say that the strongly regular graph is \textsl{primitive}, otherwise it is \textsl{imprimitive}. If $\overline{X}$ is disconnected, then Lemma \ref{lem:godsil_royle_imprimitive} applies and $X$ is isomorphic to the complete multipartite graph with $\frac{n}{n-k}$ parts of size $n-k$. A \textsl{conference graph} is a strongly regular graph for which the multiplicities of $\theta$ and $\tau$ are equal; these are the only strongly regular graphs of which the eigenvalues are potentially non-integral (see for instance \cite[Lemma 10.3.3]{GodRoy2001}). In the following theorem, we give a classification of peak state transfer in strongly regular graphs.

\begin{theorem}
\label{thm:SRGs_peakST}
 Suppose that $X$ is a strongly regular graph on $n$ vertices. Peak state transfer occurs in $X$ if and only if $X$ is isomorphic to one of the following:
 \begin{enumerate}[(a)]
     \item the disjoint union of $\frac{n}{2}$ copies of $K_2$;
    \item the  complete bipartite graph $K_{\frac{n}{2},\frac{n}{2}}$; or 
    \item the complete tripartite graph $K_{\frac{n}{3},\frac{n}{3},\frac{n}{3}}$. 
 \end{enumerate}
\end{theorem}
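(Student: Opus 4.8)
The plan is to split strongly regular graphs into three classes and treat each with the machinery already in hand. By \lref{godsil_royle_imprimitive}, a disconnected strongly regular graph is a disjoint union of $r>1$ copies of $K_{k+1}$; a connected one whose complement is disconnected is a complete multipartite graph $K_{m\times r}=K_r[\overline{K_m}]$; and everything else is primitive. For the first class, peak state transfer between two vertices of one $K_{k+1}$ is peak state transfer in $K_{k+1}$, which by \lref{complete_graphs_peakST} occurs only when $k+1=2$, while transfer across distinct components is zero state transfer and is excluded; this yields exactly the perfect matchings, family~(a). For the second class, the remarks closing \sref{lex_products_arc_reversal} (combining \lref{complete_graphs_peakST}, \lref{lex_product_peakST} and \lref{lex_product_periodicity}) show that $K_{m\times r}$ admits peak state transfer precisely when $r\in\{2,3\}$, i.e.\ for $K_{m,m}$ and $K_{m,m,m}$; this yields families~(b) and~(c) (the only boundary issue being $K_{1,1,1}=K_3$, which is periodic but has no peak state transfer, so the tripartite family should be read with parts of size at least $2$). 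Thus the whole theorem reduces to showing \emph{no primitive strongly regular graph admits peak state transfer}.

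To prove that, let $X$ be primitive with parameters $(n,k,a,c)$, so $0<c<k$ and $0<k<n-1$, and recall $B=\tfrac1k A$ has eigenvalues $1>\theta/k>\tau/k$ with $\theta>0>\tau>-k$ (the last inequality because $\tau=-k$ would make $X$ connected bipartite, hence complete bipartite and imprimitive). Using the standard spectral idempotent formulas for strongly regular graphs, for any non-adjacent pair $u\not\sim v$ (such pairs exist since $k<n-1$) one gets
\[
E_\theta(u,v)=\frac{-(k-\tau)/n}{\theta-\tau}<0
\qquad\text{and}\qquad
E_\tau(u,v)=\frac{-(k-\theta)/n}{\tau-\theta}>0 ,
\]
so $\{\theta/k,\tau/k\}\subseteq\Lambda(u,v)$. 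By \tref{PeakSTchar}(i), peak state transfer from $u$ to $v$ therefore forces both $\theta/k$ and $\tau/k$ to be cosines of rational multiples of $\pi$.

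Now split on whether the eigenvalues of $A$ are integral. If they are (all strongly regular graphs other than genuine conference graphs, plus conference graphs on a square number of vertices), then $\theta/k,\tau/k\in\mathbb Q$, and \lref{conway_jones}(i) forces $\theta/k=\tfrac12$ and $\tau/k=-\tfrac12$, these being the only elements of $\{0,\pm\tfrac12,\pm1\}$ lying in $(0,1)$ and $(-1,0)$. Then $a-c=\theta+\tau=0$ and $c-k=\theta\tau=-k^2/4$, so $c=k(4-k)/4$; requiring $c$ to be a nonnegative integer and $\theta=k/2$ a positive integer leaves only $k\in\{2,4\}$, and in each case a one-line check ($k=4$ gives $c=0$; $k=2$ forces $n=3$, i.e.\ the complete graph $K_3$) shows the graph is not primitive — a contradiction. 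If instead $X$ is a conference graph with irrational eigenvalues, then $\theta/k$ and $\tau/k$ are an irrational Galois-conjugate pair with rational sum $(\theta+\tau)/k=-1/k=-2/(n-1)$; writing the negative ratio as $\tau/k=-\cos\phi$ for a rational multiple $\phi\in[0,\pi/2]$ of $\pi$, \lref{conway_jones}(ii) applies to the rational combination $\cos\eta-\cos\phi=-2/(n-1)$ and shows it must be proportional to $\cos\tfrac{\pi}{5}-\cos\tfrac{2\pi}{5}=\tfrac12$; matching the (negative) right-hand side then forces $n=5$, i.e.\ $X=C_5$, and $C_5$ admits no peak state transfer by \lref{cycles_peakST}. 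This exhausts all cases and proves the theorem.

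The main obstacle is the primitive case, and inside it the conference graphs: one must extract exactly the right conclusion from \lref{conway_jones}(ii) — in particular, normalizing $\tau/k<0$ to an angle in $[0,\pi/2]$ before applying it, and then exploiting that the two cosines sum to a \emph{negative} rational to pin the parameter set down to $n=5$ — after which the leftover graph $C_5$ is already known not to support peak state transfer. By comparison, the integral subcase is a short parameter computation, and the two imprimitive families are bookkeeping on top of the results of \sref{lex_products_arc_reversal}.
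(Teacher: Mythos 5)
Your overall architecture matches the paper's: reduce to the primitive case via \lref{godsil_royle_imprimitive} and the blow-up results of \sref{lex_products_arc_reversal}, then kill the primitive case by showing that condition (i) of \tref{PeakSTchar} forces $\theta/k$ and $\tau/k$ to be cosines of rational multiples of $\pi$ and deriving a parameter contradiction via \lref{conway_jones}. The one genuine gap is in the primitive case: you compute $E_\theta(u,v)$ and $E_\tau(u,v)$ only for \emph{non-adjacent} pairs, so your argument rules out peak state transfer only between vertices at distance $2$. Since ``peak state transfer occurs in $X$'' means it occurs between \emph{some} pair, you must also exclude adjacent pairs, for which nothing in your write-up establishes that $\theta/k,\tau/k\in\Lambda(u,v)$. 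The fix is the same computation with the other entry of the idempotent: for $u\sim v$ one has $E_\theta(u,v)=\frac{n-k+\tau}{n(\theta-\tau)}$ and $E_\tau(u,v)=\frac{k-n-\theta}{n(\theta-\tau)}$, which are nonzero because $\tau>k-n$ (strictness coming from connectivity of the complement, whose nontrivial eigenvalue $-1-\tau$ is strictly below its valency $n-k-1$) and $\theta>0>k-n$. Note that the inequality you do record, $\tau>-k$, does not by itself give $n-k+\tau\neq 0$ when $n\le 2k$, so you genuinely need the complement bound here. The paper avoids the issue by observing at once that all entries of every $q_\lambda$ are nonzero, so $\Lambda(u,v)=\{1,\theta/k,\tau/k\}$ for \emph{every} pair.

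Everything else is sound, and two of your choices improve on the paper. In the conference-graph case you apply \lref{conway_jones}(ii) to the rational sum $\cos\eta-\cos\phi=(\theta+\tau)/k=-2/(n-1)$ rather than to the paper's product $\cos\phi\cos\psi=1/(n-1)$ (which needs a product-to-sum conversion first); your route is shorter, and the sign argument that pins the proportionality down to $n=5$, leaving only $C_5$ to be dismissed by \lref{cycles_peakST}, is correct. You are also right that $K_{1,1,1}=K_3$ is periodic but admits no peak state transfer by \lref{complete_graphs_peakST}, so case (c) of the statement should be read with parts of size at least $2$ --- a boundary case the stated theorem glosses over. Finally, excluding $\tau=-k$ upfront (a connected bipartite strongly regular graph has diameter $2$, hence is complete bipartite, hence imprimitive) is a legitimate shortcut that replaces one branch of the paper's case analysis in the integral-eigenvalue subcase.
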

\begin{proof}
Assume first that $X$ is imprimitive. If $X$ is disconnected, it is isomorphic to the disjoint union of $r$ copies of $K_{k+1}$ for some $r > 1$ by Lemma \ref{lem:godsil_royle_imprimitive}. There is zero state transfer between distinct connected components of $X$, and within each component, there is peak state transfer if and only if the complete graph $K_{k+1}$ admits peak state transfer. By Lemma \ref{lem:complete_graphs_peakST}, the only complete graph that admits peak state transfer is $K_2$, so that this is case (a).

If $\overline{X}$ is disconnected, then $X$ is isomorphic to the complete multipartite graph with $r := \frac{n}{n-k}$ parts of size $n-k$. As discussed at the end of the previous section, by Lemmas \ref{lem:lex_product_peakST} and \ref{lem:lex_product_periodicity}, such a multipartite graph admits peak state transfer if and only if it is bipartite or tripartite, so that gives either case (b) or case (c).

It remains is to show that if $X$ is primitive, it does not admit peak state transfer. Since $X$ is primitive, its eigenvalues satisfy
\begin{equation}
\label{eq:srg_eigenvalues_primitive}
k > \theta > 0 > \tau > k-n,
\end{equation}
by standard argumentation that can be found in \cite{GodRoy2001}. We also have that $\theta$ and $\tau$ satisfy the following equations:
\begin{equation}
\label{eq:srg_theta_tau_equations}
\theta + \tau = a - c \quad \text{and} \quad \theta\tau = c - k
\end{equation}

We denote the multiplicities of the eigenvalues $k$, $\theta$ and $\tau$ by $m_k = 1$, $m_\theta$ and $m_\tau$ respectively. The distance matrices $A_0,A_1,A_2$ of $X$ (where $A_i(u,v) = 1$ if the distance from $u$ to $v$ is $i$ and $0$ otherwise) can be written as 
\[
A_0 = I, \quad A_1 = A(X) \quad \text{and} \quad A_2 = J - I - A(X)
\]
where $A(X)$ is the adjacency matrix. The spectral idempotents can be written as linear combinations of the $A_j$:
\[
E_\lambda = \frac{1}{n}\sum_{j=0}^2 q_\lambda(j) A_j,
\]
where the $q_\lambda \in \re^{\{0,1,2\}}$ are given by
\[
q_k = \begin{bmatrix}
1 \\ 1 \\ 1
\end{bmatrix}, \quad q_\theta = \begin{bmatrix}
m_\theta \\ \frac{n-k+\tau}{\theta-\tau} \\ \frac{\tau-k}{\theta-\tau}
\end{bmatrix}
\text{ and} \quad q_{\tau} = 
\begin{bmatrix}
m_\tau \\ \frac{k-n-\theta}{\theta-\tau} \\ \frac{k-\theta}{\theta-\tau}
\end{bmatrix}.
\]
Since $X$ is $k$-regular, the projected transition matrix of the arc-reversal walk on $X$ is given by $B = \frac{1}{k}A(X)$ and it has eigenvalues $1 > \frac{\theta}{k} > \frac{\tau}{k}$, with the same respective idempotents $E_k$, $E_\theta$ and $E_\tau$.

By \eqref{eq:srg_eigenvalues_primitive}, and since $\theta \geq 0$, all entries of the $q_\lambda$ are non-zero, which then implies that all entries of each idempotent $E_\lambda$ are non-zero. We conclude that the mutual eigenvalue support
$\Lambda = \{\lambda : E_\lambda(u,v) \neq 0\}$ is equal to $\{1,\frac{\theta}{k},\frac{\tau}{k}\}$ for all pairs of vertices $u,v$. In particular if there is peak state transfer in the primitive strongly regular graph $X$, then $\theta$ and $\tau$ must be cosines of rational multiples of $\pi$.

We proceed by assuming first that $X$ is not a conference graph. For a non-conference-graph, the eigenvalues $\theta$ and $\tau$ are integer, so that $\frac{\theta}{k}$ and $\frac{\tau}{k}$ are rational. In light of Lemma \ref{lem:conway_jones}, as well as the restrictions on $\theta$ and $\tau$ that we have already mentioned, if peak state transfer occurs, $\theta$ must be equal to $\frac{k}{2}$ and $\tau$ is either $-\frac{k}{2}$ or $-k$. If $\tau = -\frac{k}{2}$, it must be that $a = c$ and $c-k = -\frac{k^2}{4}$ (from \eqref{eq:srg_theta_tau_equations}). This can only hold if $k = 2 \pm 2\sqrt{1-c}$, i.e., if $c = 1$ and $k = 2$, or $c=0$ and $k=4$. In the former case, $a = c = 1 = k-1$, so that in both cases, $X$ would be disconnected (Lemma \ref{lem:godsil_royle_imprimitive}), but we assumed that it is not. On the other hand, if $\tau = -k$, then $k - c = \frac{k^2}{2}$, so that $k = 1 \pm \sqrt{1 - 2c}$, i.e., $c = 0$ and $k=2$, which again contradicts the imprimitivity of $X$.

Finally, assume that $X$ is a conference graph. By \cite{GodRoy2001}, $n$ must be congruent to $1 \mod 4$, and $k = \frac{n-1}{2}$ and $c = \frac{n-1}{4}$. Since $0 < \theta < k$ and $-k < \tau < 0$, let $\phi,\theta \in (0,\frac{\pi}{2})$ be such that $\cos(\phi)= \frac{\theta}{k}$ and $\cos(\psi) = -\frac{\tau}{k}$. 
Assume that there is peak state transfer: then both $\phi$ and $\psi$ are rational multiples of $\pi$. It follows that
\[
\cos(\phi)\cos(\psi) = \frac{k-c}{k^2} = \frac{1}{n-1},
\]
or equivalently
\[
\cos(\phi - \psi) + \cos(\phi + \psi) = \frac{2}{n-1}.
\]
Note that $\phi - \psi \in (-\frac{\pi}{2},\frac{\pi}{2})$ and $\phi + \psi \in (0,\pi)$, so there exist rational multiples $\alpha \in [0,\frac{\pi}{2})$ and $\beta \in (0,\frac{\pi}{2}]$ of $\pi$ such that
\begin{equation}
\label{eq:cos_equation_srgs}
\cos(\alpha) + \gamma\cos(\beta) = \frac{2}{n-1}
\end{equation}
for some $\gamma = \pm 1$. Either $\cos(\alpha)$ and $\cos(\beta)$ are both rational, or they are both irrational. In the latter case, by Lemma \ref{lem:conway_jones}(ii), $n$ must be equal to $5$. The only primitive strongly regular graph on $5$ vertices is the $5$-cycle, which does not admit peak state transfer by Lemma \ref{lem:cycles_peakST}. If $\cos(\alpha)$ and $\cos(\beta)$ are both rational on the other hand, we must have $\cos(\alpha) \in \{\frac{1}{2},1\}$ and $\cos(\beta) \in \{0,\frac{1}{2}\}$. Plugging in all possible values in \eqref{eq:cos_equation_srgs}
\[
\frac{2}{n-1} \in \left\{0,\tfrac{1}{2},1,\tfrac{3}{2}\right\}
\]
so that the only possible integer values for $n$ are $3$ and $5$. Once more, that leaves the only the $5$-cycle as a primitive strongly regular graph. We conclude that peak state transfer does not occur in conference graphs.
\end{proof}

\begin{remark}\label{rem:DRGs}
We have used that the spectral idempotents of a strongly regular graph can be expressed as linear combinations of $I, A$ and $J-A-I$. Since the eigenvalues and the coefficients in these expressions for the spectral idempotents only depend on the parameter set $(n,k,a,c)$, the existence of peak state transfer is also determined by $(n,k,a,c)$. In other words, if $X$ is a strongly regular graph with parameter set $(n,k,a,c)$ that admits peak state transfer between some pair of vertices, then any other strongly regular graph with parameter set $(n,k,a,c)$ also admits peak state transfer between some pair of vertices. 

More generally, the same holds for distance-regular graphs (for definitions, see \cite{BroCohNeu1989}). For a distance-regular graph of diameter $d$, the eigenvalues and entries in the spectral idempotents are determined by the intersection array $\{b_0,\ldots,b_{d-1}:c_1,\ldots, c_d\}$. Thus, the existence of peak state transfer is the same for all distance-regular graphs having the same intersection array. 
\end{remark}

\subsection{Block designs}
\label{subsec:blockdesigns}

A \textsl{block design} (also known as a \textsl{$2$-design} or \textsl{balanced incomplete block design}) with parameters $v,b,r,k$ and $\lambda$ is an incidence structure on a set $\mathcal{P}$, consisting of $v$ elements (the \textsl{points}), and a collection $\mathcal{B}$, consisting of $b$ subsets of $\mathcal{P}$ of size $k$ (the \textsl{blocks}), such that each point is in $r$ blocks and each pair of distinct points is in exactly $\lambda$ blocks. See e.g.\ \cite{GodRoy2001} or \cite{ColDin2007} for more information on block designs. Clearly $vr = bk$. Moreover, we will assume that $1 < k < v$ to omit some trivialities, and then one can show that
\begin{equation}
\label{eq:2design_parameter_relations}
\frac{v(v-1)}{k(k-1)} = \frac{b}{\lambda} \quad \text{and} \quad \frac{v-1}{k-1} = \frac{r}{\lambda}
\end{equation}
so that $b$ and $r$ are completely determined by $v$, $k$ and $\lambda$. Therefore, we will call a block design with specific parameters a \textsl{$(v,k,\lambda)$-design}. Given such a design, let $N \in \{0,1\}^{v \times b}$ be the point-block incidence matrix, that is,
\[
N(x,y) = \begin{cases}
1 &\text{if point $x$ is on block $y$}; \\
0 &\text{otherwise.}
\end{cases}
\]
We will write $J_{c \times d}$ for the $c \times d$ all-ones matrix, or simply $J$ if it is clear from the context what its  dimensions are. We have
\begin{align}
\label{eq:NNT_expression_JI} NN^T &= \lambda J + (r - \lambda)I  \\
\label{eq:NNT_spectral_decomposition} &= rk \cdot \frac{1}{v}J + (r-\lambda)\cdot \left(I - \frac{1}{v}J\right),
\end{align}
where the last expression is the decomposition of $NN^T$. Since $r \neq \lambda$ by our assumptions, the matrix $N$ has full row rank, which implies that $v \leq b$. Note further that for any positive integer $c$, we have
\begin{equation}
\label{eq:N_times_J}
NJ_{b \times c} = rJ_{v \times c} \quad \text{and} \quad N^T J_{v \times c} = kJ_{b\times c}.
\end{equation}

Consider now the matrix
\[
A = \begin{bmatrix}
0 & N \\
N^T & 0
\end{bmatrix},
\]
which is the adjacency matrix of the bipartite graph that represents the block design, with the points in the first part and the blocks in the other. The bipartite graph is biregular, so that the projected transition matrix with respect to the arc-reversal walk on this graph is given by
\[
B = \frac{1}{\sqrt{rk}} A.
\]
In order to determine whether peak state transfer can occur in the arc-reversal walk on this bipartite graph, we need the spectral decomposition of $A$, which is given in the following lemma. Although this result is well-known and deducible in a straightforward manner, it is not commonly state in this form, so we have provided a proof in Appendix \ref{app:proofs}.

\begin{lemma}
\label{lem:2design_idempotents}
Let $X$ be the point-block incidence graph of a block design with parameters $v,b,r,k$ and $\lambda$. If $v < b$, the eigenvalues of its adjacency matrix $A$ are $0, \pm \sqrt{r-\lambda}$ and $\pm\sqrt{rk}$ and their respective spectral idempotents are
\begin{align*}
E_{0} &= 
\begin{bmatrix}
0 & 0 \\
0 & I - \frac{1}{r-\lambda}\left(N^TN - \frac{\lambda v}{b}J\right)
\end{bmatrix}; \\
E_{\pm \sqrt{r-\lambda}} &= 
\frac{1}{2}
\begin{bmatrix}
I - \frac{1}{v}J & \pm\frac{1}{\sqrt{r-\lambda}} \left(N - \frac{k}{v}J\right)\\
\pm \frac{1}{\sqrt{r-\lambda}} \left(N - \frac{k}{v}J\right) & \frac{1}{r-\lambda}\left(N^TN - \frac{rk}{b}J\right)
\end{bmatrix}
; \quad \text{and}\\
E_{\pm \sqrt{rk}} &= \frac{1}{2}
\begin{bmatrix}
\frac{1}{v}J & \pm \frac{k}{v\sqrt{rk}} J \\
\pm \frac{k}{v\sqrt{rk}} J & \frac{1}{b}J
\end{bmatrix}.
\end{align*}
If $v = b$, then its eigenvalues are $\pm \sqrt{r-\lambda}$ and $\pm \sqrt{rk}$, with the same spectral idempotents as above.
\end{lemma}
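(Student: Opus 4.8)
The plan is to reduce the bipartite adjacency matrix $A=\bmat{0 & N \\ N^T & 0}$ to the singular value decomposition of $N$, and then do bookkeeping with the various all-ones blocks using the two design identities $vr=bk$ and $r(k-1)=\lambda(v-1)$ from \eqref{eq:2design_parameter_relations}. First I would read the singular values of $N$ off the decomposition $NN^T = rk\cdot\tfrac1v J + (r-\lambda)(I-\tfrac1v J)$ from \eqref{eq:NNT_expression_JI}: so $NN^T$ has eigenvalue $rk$ on the span of $\ones_v$ and eigenvalue $r-\lambda$ on $\ones_v^{\perp}$, whence $N$ has singular value $\sqrt{rk}$ with multiplicity $1$, singular value $\sqrt{r-\lambda}$ with multiplicity $v-1$, and (only when $v<b$) the singular value $0$ with multiplicity $b-v$. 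Since the nonzero eigenvalues of $A$ are exactly the numbers $\pm\sigma$ over the nonzero singular values $\sigma$ of $N$, and $\ker A=\{\zeros_v\}\oplus\ker N$ has dimension $b-v$ (using that $N$ has full row rank, as noted after \eqref{eq:NNT_spectral_decomposition}), the claimed list of eigenvalues follows, with no $0$ appearing when $v=b$.

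Next I would record the general shape of the spectral idempotent attached to a nonzero singular value $\sigma$. If $u$ is a unit eigenvector of $NN^T$ with eigenvalue $\sigma^2>0$ and $w:=\sigma^{-1}N^Tu$, then $\|w\|=1$, and $\bmat{u \\ \pm w}$ is an eigenvector of $A$ with eigenvalue $\pm\sigma$; as $u$ ranges over an orthonormal basis of the $\sigma^2$-eigenspace of $NN^T$ and the sign varies, these vectors are pairwise orthogonal, become orthonormal after dividing by $\sqrt2$, and span the $(\pm\sigma)$-eigenspaces of $A$ by a dimension count. Writing $P$ for the orthogonal projection onto the $\sigma^2$-eigenspace of $NN^T$ and summing the rank-one terms $\tfrac12\bmat{u \\ \pm w}\bmat{u \\ \pm w}^T$, the identities $\sum_i u_iu_i^T=P$, $\sum_i u_iw_i^T=\sigma^{-1}PN$ and $\sum_i w_iw_i^T=\sigma^{-2}N^TPN$ yield
\[
E_{\pm\sigma}=\frac12\bmat{P & \pm\sigma^{-1}PN \\ \pm\sigma^{-1}N^TP & \sigma^{-2}N^TPN}.
\]

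Then I would specialize this: $\sigma^2=rk$ with $P=\tfrac1v J_v$, and $\sigma^2=r-\lambda$ with $P=I_v-\tfrac1v J_v$. Computing $PN$ and $N^TPN$ with the help of $N^T\ones_v=k\ones_b$ from \eqref{eq:N_times_J}, and simplifying the resulting all-ones blocks using $\tfrac{k}{vr}=\tfrac1b$ and $\tfrac{k^2}{v}=\tfrac{rk}{b}$ (both restatements of $vr=bk$), reproduces exactly the stated $E_{\pm\sqrt{rk}}$ and $E_{\pm\sqrt{r-\lambda}}$. For the eigenvalue $0$ in the case $v<b$, I would take $E_0=I_{v+b}-E_{\sqrt{rk}}-E_{-\sqrt{rk}}-E_{\sqrt{r-\lambda}}-E_{-\sqrt{r-\lambda}}$: adding the four idempotents cancels the off-diagonal blocks, collapses the $(v,v)$-block to $I_v$, and leaves a $(b,b)$-block equal to $\tfrac1{r-\lambda}N^TN$ plus an explicit multiple of $J_b$; subtracting from $I_{v+b}$ and matching the coefficient of $J_b$ against the claimed $\tfrac{\lambda v}{b(r-\lambda)}$ is precisely the identity $r(k-1)=\lambda(v-1)$ from \eqref{eq:2design_parameter_relations}. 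When $v=b$, the matrix $A$ is invertible, so there is no $0$ eigenvalue, and the same four idempotents resolve the identity automatically because their eigenvectors already form an orthonormal basis of $\re^{2v}$; their formulas are unchanged.

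The mathematics here is entirely routine; the only real obstacle is organizational—keeping the all-ones matrices of the various shapes $J_v$, $J_b$, $J_{v\times b}$ straight, and inserting the identities $vr=bk$ and $r(k-1)=\lambda(v-1)$ at exactly the right moments. I expect the $E_0$ simplification to be the step where this bookkeeping is most delicate.
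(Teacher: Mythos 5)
Your proof is correct and takes essentially the same route as the paper's proof (specifically its ``alternative'' version): both reduce everything to the spectral decomposition of $NN^T$ together with the bipartite sign symmetry of $A$. The only cosmetic differences are that you construct the $\pm\sigma$-eigenvectors $\tfrac{1}{\sqrt{2}}\begin{bmatrix}u \\ \pm\sigma^{-1}N^Tu\end{bmatrix}$ explicitly so as to obtain a closed block formula for $E_{\pm\sigma}$, and that you recover $E_0$ by complementation rather than reading it off the kernel block of $A^2$.
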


The restrictions on the parameters $v$, $k$ and $\lambda$ make it so that $E_{\pm \sqrt{r-\lambda}}(u,v) \neq 0$
whenever $u$ represents a point (and $v$ can correspond to any point or block) in the incidence graph. This means that peak state transfer from $u$ to another vertex can only occur if the eigenvalue $\sqrt{\frac{r-\lambda}{rk}}$ can be written as a cosine of a rational multiple of $\pi$. As for peak state transfer between blocks, the lower-right matrix
\[
\frac{1}{r-\lambda}\left(N^TN - \frac{rk}{b}J\right)
\]
in the expressions for $E_{\pm \sqrt{r-\lambda}}$ in the lemma above can have entries that are equal to zero: an off-diagonal entry of $N^TN$ represents the number of points that are in both corresponding distinct blocks, an it is possible for this number to equal $rk/b$. However, the matrix $N^T N$ depends on more than just the parameters of the block design: it is possible for one realization of a $(v,k,\lambda)$-design to admit peak state transfer between a pair of blocks whereas another does not. For this reason, our main result of this section is restricted to peak state transfer involving a point of the design.

\begin{theorem}
\label{thm:block_designs_peakST}
Consider the arc-reversal walk on the point-block incidence graph $X$ of a $(v,k,\lambda)$-design. There is peak state transfer starting at a vertex corresponding to a point if and only if $v = 3$ and $k = 2$ or $v = 9$ and $k = 3$.
\end{theorem}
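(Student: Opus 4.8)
The plan is to reduce the question entirely to a question about whether $\sqrt{(r-\lambda)/(rk)}$ (and in the case $v=b$, also the behaviour of the eigenvalue $0$ which is then absent) is a cosine of a rational multiple of $\pi$, using Theorem~\ref{thm:PeakSTchar} together with Lemma~\ref{lem:2design_idempotents} and Lemma~\ref{lem:conway_jones}. First I would fix a vertex $u$ corresponding to a point of the design, and let $v$ be any other vertex (point or block). By Lemma~\ref{lem:2design_idempotents}, the spectral idempotents of $B = \tfrac{1}{\sqrt{rk}}A$ are indexed by eigenvalues $\tfrac{1}{\sqrt{rk}}\cdot\{0,\pm\sqrt{r-\lambda},\pm\sqrt{rk}\} = \{0, \pm\sqrt{(r-\lambda)/(rk)},\pm 1\}$. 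The key structural observation is that, as noted in the excerpt just before the theorem, the restrictions $1<k<v$ force $E_{\pm\sqrt{r-\lambda}}(u,v)\ne 0$: the top-left block $I-\tfrac1v J$ has nonzero off-diagonal entries and the off-diagonal block $\pm\tfrac{1}{\sqrt{r-\lambda}}(N-\tfrac kv J)$ has nonzero entries because $N$ is a $0/1$ matrix and $k/v\notin\{0,1\}$. Hence $\pm\sqrt{(r-\lambda)/(rk)}$ both lie in $\Lambda(u,v)$, and by Theorem~\ref{thm:PeakSTchar}(i), peak state transfer requires $\cos^{-1}\!\big(\sqrt{(r-\lambda)/(rk)}\,\big)$ to be a rational multiple of $\pi$.

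Next I would apply Lemma~\ref{lem:conway_jones}(i). Writing $\theta = \sqrt{(r-\lambda)/(rk)}\in(0,1)$, since $\theta$ is a cosine of a rational multiple of $\pi$ and $\theta^2$ is rational, $\theta$ itself is either rational or a quadratic irrational; but Lemma~\ref{lem:conway_jones}(i) says the only rational cosines of rational multiples of $\pi$ in $[0,1]$ are $0,\tfrac12,1$, and the only irrational ones that could still satisfy the constraint must be handled by noting that $\cos(p\pi/q)$ with $\cos^2$ rational forces $2\cos(p\pi/q)$ to be an algebraic integer of degree $\le 2$, hence $q\in\{1,2,3,4,6\}$, giving $\theta\in\{0,\tfrac12,\tfrac{1}{\sqrt2},\tfrac{\sqrt3}{2},1\}$. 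Combined with $0<\theta<1$ this leaves $\theta^2\in\{\tfrac14,\tfrac12,\tfrac34\}$, i.e. $\frac{r-\lambda}{rk}\in\{\tfrac14,\tfrac12,\tfrac34\}$. Using the design relations $\frac{v-1}{k-1}=\frac r\lambda$ (so $r = \lambda\frac{v-1}{k-1}$ and $r-\lambda = \lambda\frac{v-k}{k-1}$), each of the three cases becomes a Diophantine relation among $v,k,\lambda$. The case $\theta^2 = \tfrac12$ gives $2(r-\lambda) = rk$, i.e. $2\lambda\frac{v-k}{k-1} = \lambda k\frac{v-1}{k-1}$, so $2(v-k) = k(v-1)$, forcing small $k$; one checks $k=2$ gives $v=4,\lambda=?$—I would grind through these to find the only admissible parameter sets are $(v,k)=(3,2)$ (the trivial $2$-design, $\lambda=1$, which is $K_3$ as a bipartite incidence graph, equivalently $C_6$) and $(v,k)=(9,3)$ (an $AG(2,3)$-type design, $\lambda=1$); the cases $\theta^2=\tfrac14$ and $\theta^2=\tfrac34$ should yield no new valid designs or degenerate ones violating $1<k<v$ or integrality of $r,\lambda$.

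Finally I would verify the converse: for $(v,k,\lambda)=(3,2,1)$ the incidence graph is $C_6$ and Lemma~\ref{lem:cycles_peakST} (with $n=6=2\cdot 3$) gives perfect state transfer from each point to the "antipodal" vertex, in particular peak state transfer starting at a point; for $(v,k,\lambda)=(9,3,1)$ I would compute the three distinct absolute eigenvalues $1, \sqrt{(r-\lambda)/(rk)}$ and $0$ of $B$—here $r=4$, $r-\lambda=3$, $rk=12$, so $\theta=\sqrt{3/12}=\tfrac12=\cos\tfrac\pi3$—check that the parity condition (ii) of Theorem~\ref{thm:PeakSTchar} holds for some target vertex $v$ (with $\tau = \operatorname{lcm}$ of the relevant denominators, here involving $q=1$ for $\theta=1$, $q=3$ for $\theta=\tfrac12$, and $q=2$ for $\theta=0=\cos\tfrac\pi2$, so $\tau=6$), and exhibit a vertex $v$ (another point, using the sign pattern of $N-\tfrac kv J$ and of the $0$-idempotent) for which the signs align. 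The main obstacle I anticipate is the bookkeeping in this converse direction: confirming that for the $AG(2,3)$ design there genuinely exists a target vertex whose mutual eigenvalue support signs are compatible with a single choice of $\gamma$ under $\tau=6$, rather than merely that the eigenvalue condition (i) holds—condition (i) alone is necessary but not sufficient, and ruling in (as opposed to ruling out) peak state transfer requires producing the explicit pair and checking the parity of $\tau\cdot p(\theta)/q(\theta)$ against the sign of $E_\theta(u,v)$ for all three eigenvalues simultaneously.
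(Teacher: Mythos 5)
Your overall strategy is the same as the paper's: use Lemma \ref{lem:2design_idempotents} to see that $\pm\sqrt{(r-\lambda)/(rk)}$ and $\pm 1$ lie in the mutual eigenvalue support of a point and any other vertex, invoke Theorem \ref{thm:PeakSTchar}(i) and Lemma \ref{lem:conway_jones} to force $\frac{r-\lambda}{rk}\in\{\tfrac14,\tfrac12,\tfrac34\}$, solve the resulting Diophantine conditions, and then verify the sign/parity condition for the two surviving parameter sets. However, there are concrete errors in both directions. In the necessity direction, you attribute the solutions $(v,k)=(3,2)$ and $(9,3)$ to the case $\theta^2=\tfrac12$; in fact $2(r-\lambda)=rk$ reduces to $v(2-k)=k$, which has no admissible solutions (your sub-claim that $k=2$ gives $v=4$ is false: $k=2$ yields $2v-4=2v-2$, a contradiction). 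The valid designs come from $\theta^2=\tfrac14$, via $4(r-\lambda)=rk\iff v=\frac{3k}{4-k}$. The paper avoids grinding through three cases by observing $\frac{r-\lambda}{rk}<\frac1k\le\frac12$, which kills $\tfrac12$ and $\tfrac34$ immediately. This slip is recoverable, since you do promise to check all three cases.

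The more serious gap is in the sufficiency direction. The eigenvalue $0$ is \emph{not} in the mutual eigenvalue support of a point $x$ and any other vertex: by Lemma \ref{lem:2design_idempotents}, $E_0$ vanishes outside the block--block corner, so its $x$-row is zero. Hence $\Lambda(x,y)=\{\pm 1,\pm\tfrac12\}$ and $\tau=\lcm\{1,1,3,3\}=3$, not $6$. Your choice $\tau=6$ is not merely a bookkeeping difference: since $\sum_{\theta\in\Lambda}E_\theta(x,y)=0$ and $T_6(\theta)=1$ for all four eigenvalues in $\Lambda(x,y)$, one gets $B_6(x,y)=0$, and the parity check at $\tau=6$ (which would require $1$ and $-1$ to lie in the same signed part of the support, whereas $E_{\pm\sqrt{rk}}(x,y)=\pm\frac{k}{2v\sqrt{rk}}$ have opposite signs) would lead you to the false conclusion that no peak state transfer occurs. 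Relatedly, your proposed target ``another point'' does not work: for two points $x,x'$ one has $E_{\pm 1}(x,x')=\frac{1}{2v}>0$ for both signs, while at $\tau=3$ the integers $\tau\cdot\frac{p(\theta)}{q(\theta)}$ for $\theta=1$ and $\theta=-1$ have opposite parities, so condition (ii) of Theorem \ref{thm:PeakSTchar} fails. The correct target is a block $y$ not incident to $x$: then $E_{\pm1}(x,y)=\pm\frac{k}{2v\sqrt{rk}}$ and $E_{\pm\frac12}(x,y)=\mp\frac{k}{2v\sqrt{r-\lambda}}$, which match the parity pattern at $\tau=3$ with $\gamma=1$, giving peak state transfer at time $3$.
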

\begin{proof}
Assume that there is peak state transfer from vertex $x$ to vertex $y$ at some time, where $x$ corresponds to a point of the design. To determine the mutual eigenvalue support of $x$ and $y$, we consider the spectral idempotents $E_\mu$ of the adjacency matrix $A$ of $X$, as given in Lemma \ref{lem:2design_idempotents}. From $E_0$, we can see that the eigenvalue $0$ is not in their mutual eigenvalue support. The top row of blocks of the idempotents $E_{\pm \sqrt{rk}}$ clearly do not contain a zero, and the same is true for the top row of blocks of the idempotents $E_{\pm \sqrt{r - \lambda}}$, since we are assuming that $1 < k < v$. As $B = \frac{1}{\sqrt{rk}}A $ has the same spectral idempotents as $A$, this means that its eigenvalues $\pm \sqrt{\frac{r-\lambda}{rk}}$ and $\pm 1$ are in the mutual eigenvalue support of $x$ and $y$. Hence, by Theorem \ref{thm:PeakSTchar}, they must be cosines of rational multiple of $\pi$. This trivially holds for the eigenvalues $\pm 1$. This condition is very restrictive for the eigenvalue $\sqrt{\frac{r-\lambda}{rk}}$, however: if there exists $q \in \rats$ such that
\[
\sqrt{\frac{r-\lambda}{rk}} = \cos(q\pi),
\]
then 
\[
\frac{r-\lambda}{rk} = \cos^2(q\pi) = \frac{1}{2}\cos(2q \pi) + \frac{1}{2}.
\]
By Lemma \ref{lem:conway_jones}, since the expression on the right is rational, this can only happen if
\[
\frac{r-\lambda}{rk} \in \left\{0,\tfrac{1}{4}, \tfrac{1}{2}, \tfrac{3}{4}, 1\right\}.
\]
We are assuming that $\lambda < r$, so this eigenvalue cannot be 0. Moreover,
\[
\frac{r-\lambda}{rk} = \frac{1}{k} - \frac{\lambda}{rk} < \frac{1}{k} \leq \frac{1}{2},
\]
since we also assume that $k > 1$ This implies that $\frac{r - \lambda}{rk} = \frac{1}{4}$ is the only solution, and that the eigenvalues of $B$ are $-1,-\frac{1}{2},0,\frac{1}{2}$ and $1$. Using \eqref{eq:2design_parameter_relations}, we can rewrite
\[
\begin{split}
4(r-\lambda) = rk \,\, &\iff \,\, 4\lambda\left(\frac{v-1}{k-1}-1\right) = \frac{v-1}{k-1}\cdot \lambda k \\ 
&\iff \,\, 4(v-1) - 4(k-1) = (v-1)k \\
&\iff \,\, v = \frac{3k}{4-k}
\end{split}
\]
Then $k<4$ and we already ruled out $k=1$, so there are two cases: $k=2$, which implies $v=3$, and $k=3$, which implies $v=9$.

Conversely, assume that either of the cases in the statement of the theorem holds, so we have a $(3,2,\lambda)$-design or a $(9,3,\lambda)$-design. The following eigenvalues are in the mutual eigenvalue support of $x$ and $y$:
\[
-1 = \cos \frac{1 \cdot \pi}{1},\quad
-\frac{1}{2} = \cos\frac{2\cdot \pi}{3}, \quad
\frac{1}{2} = \cos\frac{1 \cdot \pi}{3}, \quad
1 = \cos\frac{0\cdot \pi}{1}
\]
By Theorem \ref{thm:PeakSTchar}, if there is peak state transfer between $x$ and $y$, it must occur for the first time at time $3$. Since the value of $\gamma$ in Theorem \ref{thm:PeakSTchar}(ii) is $1$, their is peak state transfer if and only if $-\frac{1}{2}$ and $1$ are in the positive mutual eigenvalue support of $x$ and $y$ and $-1$ and $\frac{1}{2}$ are in the negative eigenvalue support. Indeed, if $y$ is a block that $x$ is not incident to, we can see from Lemma \ref{lem:2design_idempotents} that $1$ is in their positive mutual eigenvalue support $-1$ is in their negative mutual eigenvalue support. Further, since $N(x,y) = 0$ we have that $E_{\sqrt{r-\lambda}}(x,y) < 0$ and $E_{-\sqrt{r-\lambda}}(x,y) > 0$, so that the eigenvalues $\frac{1}{2}$ and $-\frac{1}{2}$ are respectively in the positive and negative mutual eigenvalue supports of $x$ and $y$. This completes the proof.
\end{proof}

Since the number of blocks of a $(3,2,1)$-design is $3$, the only possible design is, up to permutations of points and blocks, given by the incidence matrix
\[
N = \begin{bmatrix}
0 & 1 & 1 \\
1 & 0 & 1 \\
1 & 1 & 0
\end{bmatrix}.
\]
Note that the corresponding point-block incidence matrix corresponds to the $6$-cycle, for which the occurrence of peak state transfer was alternatively shown in Lemma \ref{lem:cycles_peakST}. The unique realisation of a $(9,3,1)$-design is provided in \cite{ColDin2007} and we give a visualization of it in Figure \ref{fig:affine_plane_peakST}.
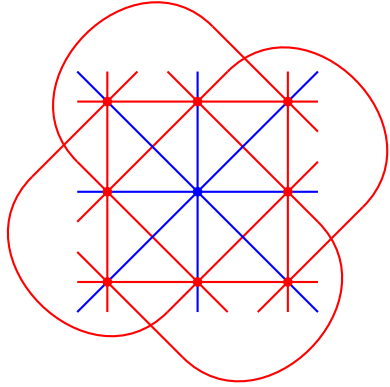
\begin{figure}
\centering
\begin{tikzpicture}[scale=.8]
	\begin{pgfonlayer}{nodelayer}
		\node (0) at (-1.5, 1.5) {};
		\node (1) at (0, 1.5) {};
		\node (2) at (1.5, 1.5) {};
		\node (3) at (-1.5, 0) {};
		\node (4) at (0, 0) {};
		\node (5) at (1.5, 0) {};
		\node (6) at (-1.5, -1.5) {};
		\node (7) at (0, -1.5) {};
		\node (8) at (1.5, -1.5) {};
		\node (9) at (-2, 2) {};
		\node (10) at (2, 2) {};
		\node (11) at (1.5, 2) {};
		\node (12) at (2, 1.5) {};
		\node (13) at (-2, 1.5) {};
		\node (14) at (-1.5, 2) {};
		\node (15) at (-2, -2) {};
		\node (16) at (-2, -1.5) {};
		\node (17) at (-1.5, -2) {};
		\node (18) at (1.5, -2) {};
		\node (19) at (2, -1.5) {};
		\node (20) at (2, -2) {};
		\node (21) at (-2, 0) {};
		\node (22) at (2, 0) {};
		\node (23) at (0, 2) {};
		\node (24) at (0, -2) {};
		\node (25) at (-2, -0.5) {};
		\node (26) at (0.5, 2) {};
		\node (27) at (1, -2) {};
		\node (28) at (2.75, -0.25) {};
		\node (29) at (0.5, -2) {};
		\node (30) at (-0.5, 2) {};
		\node (31) at (2, 0.5) {};
		\node (32) at (2, -0.5) {};
		\node (33) at (-2, 0.5) {};
		\node (34) at (-0.5, -2) {};
		\node (35) at (0.25, 2.75) {};
		\node (36) at (2, 1) {};
		\node (37) at (-2, -1) {};
		\node (38) at (-1, 2) {};
		\node (39) at (-2.75, 0.25) {};
		\node (40) at (-0.25, -2.75) {};
	\end{pgfonlayer}
	\begin{pgfonlayer}{edgelayer}
		\draw [thick, red] (13.center) to (12.center);
		\draw [thick, red] (16.center) to (19.center);
		\draw [thick, red] (14.center) to (17.center);
		\draw [thick, red] (11.center) to (18.center);
		\draw [thick, red] (25.center) to (26.center);
		\draw [thick, red, bend left=90, looseness=1.50] (26.center) to (28.center);
		\draw [thick, red] (27.center) to (28.center);
		\draw [thick, red] (29.center) to (33.center);
		\draw [thick, red, bend left=270, looseness=1.50] (35.center) to (33.center);
		\draw [thick, red] (35.center) to (36.center);
		\draw [thick, red] (31.center) to (34.center);
		\draw [thick, red, bend right=270, looseness=1.50] (34.center) to (39.center);
		\draw [thick, red] (39.center) to (38.center);
		\draw [thick, red] (30.center) to (32.center);
		\draw [thick, red, bend left=90, looseness=1.50] (32.center) to (40.center);
		\draw [thick, red] (40.center) to (37.center);
        \draw [thick, blue] (23.center) to (24.center);
        \draw [thick, blue] (21.center) to (22.center);
        \draw [thick, blue] (9.center) to (20.center);
        \draw [thick, blue] (15.center) to (10.center);
	\end{pgfonlayer}
    \filldraw [red]
        (0) circle (2pt)
        (1) circle (2pt)
        (2) circle (2pt)
        (3) circle (2pt)
        (5) circle (2pt)
        (6) circle (2pt)
        (7) circle (2pt)
        (8) circle (2pt);
    \filldraw [blue]
        (4) circle (2pt);
\end{tikzpicture}
\vspace{-20pt}
\caption{A depiction of the affine plane of order $3$, the unique realisation of a $(9,3,1)$-design. Each block is indicated by a line segment that passes through its $3$ incident points. In the corresponding point-block incidence graph, there is peak state transfer from the blue point to the red blocks at time $3$, i.e.\ to all blocks that the blue point is not incident to. \label{fig:affine_plane_peakST}}
\end{figure}

Other $(3,2,\lambda)$- and $(9,3,\lambda)$-designs can be obtained from these by taking multiple copies of all blocks. However, not every such design arises in this way: for instance, there are $36$ non-isomorphic $(9,3,2)$-designs \cite{ColDin2007}. As an example, consider the $(9,3,2)$-design formed by the following $3$-subsets of $\{1,\ldots,9\}$ as blocks, taken from \cite{ColDin2007}:
\[
\begin{split}
&\{1,2,3\},
\{1,2,4\},
\{1,3,4\},
\{1,5,6\},
\{1,5,7\},
\{1,6,8\},
\{1,7,9\},
\{1,8,9\},
\{2,3,4\},
\{2,5,6\},
\{2,5,7\},
\{2,6,8\}, \\
&\{2,7,9\},
\{2,8,9\},
\{3,5,8\},
\{3,5,9\},
\{3,6,7\},
\{3,6,9\},
\{3,7,8\},
\{4,5,8\},
\{4,5,9\},
\{4,6,7\},
\{4,6,9\},
\{4,7,8\}.
\end{split}
\]
Indeed, we can see that $b = 24$ (the amount of blocks) and $r = 8$ (the amount of times each number appears in a block) agree with the equations in \eqref{eq:2design_parameter_relations}. Moreover, every pair of numbers appears in two blocks, so it is a valid $(9,3,2)$-design.

\subsection{Infinite family of graphs with peak state transfer tending to 
\texorpdfstring{$1$}{1}}
\label{subsec:peakST_arc_reversal_approaches_1}

We will now give an example of an infinite family of graphs for which the amount of peak state transfer tends to $1$, meaning that the quantity in \eqref{eq:intro-peak-quantity} tends to $0$ as the number of vertices grows.

Consider the following family of graphs, obtained by blowing up the vertices $x$ and $y$ of the path graph $P_5$ as depicted in Figure \ref{fig:infinite_family_peakST}.
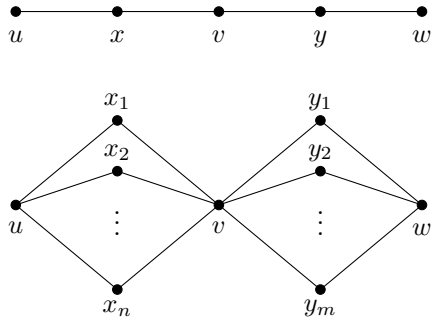
\begin{figure}
\centering
\begin{tikzpicture}[scale=.9]
	\begin{pgfonlayer}{nodelayer}
		\node [label=below:$v$] (0) at (0, 0) {};
		\node [label=below:$x$] (1) at (-1.5, 0) {};
		\node [label=below:$u$] (2) at (-3, 0) {};
		\node [label=below:$y$] (3) at (1.5, 0) {};
		\node [label=below:$w$] (4) at (3, 0) {};
	\end{pgfonlayer}
	\begin{pgfonlayer}{edgelayer}
		\draw (2.center) to (4.center);
	\end{pgfonlayer}
    \filldraw[black]
        (0) circle (2pt)
        (1) circle (2pt)
        (2) circle (2pt)
        (3) circle (2pt)
        (4) circle (2pt);
\end{tikzpicture}

\vspace{10pt}

\begin{tikzpicture}[scale=.9]
	\begin{pgfonlayer}{nodelayer}
		\node [label={[shift={(0,-.3)}]center:{$v$}}] (0) at (0, 0) {};
		\node [label={[shift={(0,.25)}]center:{$x_1$}}] (1) at (-1.5, 1.25) {};
		\node [label={[shift={(0,-.3)}]center:{$u$}}] (2) at (-3, 0) {};
		\node [label={[shift={(0,.25)}]center:{$y_1$}}] (3) at (1.5, 1.25) {};
		\node [label=below:$w$] (4) at (3, 0) {};
		\node [label={[shift={(0,.25)}]center:{$x_2$}}] (5) at (-1.5, 0.5) {};
		\node [label={[shift={(0,.25)}]center:{$y_2$}}] (6) at (1.5, 0.5) {};
		\node [label={[shift={(0,-.25)}]center:{$x_n$}}] (7) at (-1.5, -1.25) {};
		\node [label={[shift={(0,-.25)}]center:{$y_m$}}] (8) at (1.5, -1.25) {};
		\node (9) at (-1.5, -0.15) {$\vdots$};
		\node (10) at (1.5, -0.15) {$\vdots$};
	\end{pgfonlayer}
	\begin{pgfonlayer}{edgelayer}
		\draw (2.center) to (1.center);
		\draw (1.center) to (0.center);
		\draw (0.center) to (3.center);
		\draw (3.center) to (4.center);
		\draw (2.center) to (5.center);
		\draw (5.center) to (0.center);
		\draw (0.center) to (6.center);
		\draw (6.center) to (4.center);
		\draw (2.center) to (7.center);
		\draw (7.center) to (0.center);
		\draw (0.center) to (8.center);
		\draw (8.center) to (4.center);
	\end{pgfonlayer}
    \filldraw [black]
        (0) circle (2pt)
        (1) circle (2pt)
        (2) circle (2pt)
        (3) circle (2pt)
        (4) circle (2pt)
        (5) circle (2pt)
        (6) circle (2pt)
        (7) circle (2pt)
        (8) circle (2pt);
\end{tikzpicture}
\caption{The path graph $P_5$ and its blown-up counterpart $G_{n,m}$. There is peak state transfer from $u$ to $w$ at time $4$ in the arc-reversal walk on $G_{n,m}$. If $n = m$ the peak state transfer is perfect state transfer. For the graphs $G_{n,n+1}$, the amount of state transfer converges to $1$ as $n \to \infty$.\label{fig:infinite_family_peakST}}
\end{figure}
There are $n$ copies of $x$ and $m$ copies of $y$, resulting in a graph on $n + m + 3$ vertices, which we will denote by $G_{n,m}$. Its adjacency matrix is given by
\[
A = \left[\begin{array}{c|c|c|c|c}
& \ones_n^T & & & \\ \hline
\ones_n & & \ones_n & & \\ \hline
& \ones_n^T & & \ones_m^T & \\ \hline
& & \ones_m & & \ones_m \\ \hline
& & & \ones_m^T & 
\end{array}\right],
\]
the first, middle and last columns corresponding to vertices $u$, $v$ and $w$ respectively and the blocks of size $n$ and $m$ lying in between corresponding to the $x_i$ and $y_j$ respectively. We will show that in the arc-reversal walk on $G_{n,m}$, there is peak state transfer from $u$ to $w$ at time $4$ for all choices of $n$ and $m$. The projected transition matrix is the normalized adjacency matrix $B = D^{-1/2} A D^{-1/2}$, where $D$ is the degree matrix of the graph. By considering its structure, one can see that $B$ has rank $4$. As mentioned in Remark \ref{rem:gamma=1}, it has the eigenvector
\[
D^{1/2}\ones = \begin{bmatrix}\sqrt{n} & \sqrt{2}\cdot\ones_n^T & \sqrt{n+m} & \sqrt{2}\cdot\ones_m^T & \sqrt{m}\end{bmatrix}^T
\]
corresponding to its Perron-Frobenius eigenvalue $1$. The vector above has norm $2\sqrt{n+m}$, so that the $(u,w)$-entry of the corresponding spectral idempotent $E_1$ is equal to
\[
E_1(u,w) = \frac{\sqrt{nm}}{4(n + m)}.
\]
Since $G_{n,m}$ is a bipartite graph, the eigenvalues of $B$ are symmetric around $0$; there is also the eigenvector
\[
\begin{bmatrix}\sqrt{n} & -\sqrt{2}\cdot\ones_n^T & \sqrt{n+m} & -\sqrt{2}\cdot\ones_m^T & \sqrt{m}\end{bmatrix}^T
\]
for eigenvalue $-1$, so that the $(u,w)$-entry of the corresponding spectral idempotent $E_{-1}$ is equal to that of $E_1$. The remaining of non-zero eigenvalues is given by $\pm \sqrt{2}/2$, with corresponding eigenvectors
\[
\begin{bmatrix}\sqrt{m} & \pm\sqrt{m/n}\cdot\ones_n^T & 0 & \mp\sqrt{n/m}\cdot\ones_m^T & -\sqrt{n}\end{bmatrix}^T,
\]
(each having norm $\sqrt{2(n+m)}$), yielding
\[
E_{\pm \sqrt{2}/2}(u,w) = -\frac{\sqrt{nm}}{2(n+m)}.
\]
Finally, since $\sum_{\theta \in \ev(B)} E_{\theta}(u,w) = 0$, this means that the idempotent for the remaining eigenvalue $0$ has as its $(u,w)$-entry:
\[
E_0(u,w) = \frac{\sqrt{nm}}{2(n+m)}.
\]
We conclude that the positive and negative mutual eigenvalue supports of $u$ and $v$ are respectively $\Lambda^1 = \{-1,0,1\}$ and $\Lambda^{-1} = \{-\sqrt{2}/2,\sqrt{2}/2\}$. Since the eigenvalues can alternatively be expressed as $\cos\frac{k\pi}{4}$ for $k = 0,\ldots,4$, the odd $k$ corresponding precisely to the eigenvalues in the negative mutual eigenvalue support, there is peak state transfer from $u$ to $w$ at time $4$ by Theorem \ref{thm:PeakSTchar}. In particular, we have
\[
|B_4(u,w)| = \sum_{\theta \in \ev(B)} |E_\theta(u,w)| = \frac{2\sqrt{nm}}{n+m}.
\]
If we take $m = n+1$. If $n = m$, this value is equal $1$ and there is perfect state transfer. If we take for instance $m = n+1$, we obtain the sequence of graphs $G_{n,n+1}$ that has peak state transfer with
\[
B_4(u,w) = \sqrt{1 - \frac{1}{(2n+1)^2}}.
\]
Thus, $B_4(u,w)$ converges to $1$ as $n$ goes to $\infty$.

\subsection{Other sporadic examples}
\label{subsec:other_examples}

Since for a $k$-regular graph with adjacency matrix $A$, the transition matrix for its arc-reversal walk is given by $B = \frac{1}{k}A$, we have that $k$-regular graphs with eigenvalues in the set $\{-k,-k/2,0,k/2,k\}$ are straightforward candidates for peak state transfer to occur, as $-1,-1/2,0,1/2$, and $1$ are all cosines of rational multiples of $\pi$ (and they are in fact the only rational numbers with this property, by Lemma \ref{lem:conway_jones}(i)). Considering Theorem \ref{thm:PeakSTchar}, peak state transfer in such graphs occurs for the first time at time at most $6$, as $0 = \cos\frac{1 \cdot \pi}{2}$ and $\frac{1}{2} = \cos\frac{1 \cdot \pi}{3}$. Examples of regular graphs with few eigenvalues are distance-regular graphs (see \cite{BroCohNeu1989}). We have already considered the distance-regular graphs of diameter at most $2$; they are the strongly regular graphs that we discussed in Section \ref{subsec:SRGs}. A distance-regular graph of diameter $d$ has $d+1$ distinct eigenvalues, so searching the tables for distance-regular graphs of diameter $3$ and $4$ in Chapter 14 of \cite{BroCohNeu1989} may provide us some instances of peak state transfer.

Indeed, an example is the Hamming graph $H(3,3)$ (also defined in \cite{BroCohNeu1989}), with vertex set $\{0,1,2\}^3$, where two vertices are adjacent if they differ in exactly one coordinate. It has diameter $3$, with spectrum $\{-3,0,3,6\}$. Using SageMath \cite{sagemath} we determined that this graph of order 27 has peak state transfer from any vertex $v$ to all vertices at distance $2$ from $v$ at time $6$, and the walk is periodic with period $12$.

Interestingly, this graph has a cospectral mate that we found on House of Graphs \cite{CooDhoGoe2023} (its HoG Id is 1118). In the literature, it  is known as the dual Menger graph of the Gray configuration; this graph has 27 vertices, one for each  line of the configuration, and  two vertices being adjacent if the corresponding lines are incident to a common point. See \cite{MarPisTom2005} for a more detailed description of the configuration. There is peak state transfer from any vertex $v$ to $12$ of the vertices at distance $2$ at time $3$, and to the $4$ remaining vertices at distance $2$ at time $6$.

Another example can be found in \cite{BroCohNeu1989}: the folded $8$-cube has $2^7 = 128$ vertices and can be obtained from the $7$-dimensional hypercube by adding an edge between all antipodal pairs of vertices. It has diameter $4$ and its spectrum is $\{-8,-4,0,4,8\}$. With Sagemath, we determined that there is peak state transfer from any vertex $v$ to all vertices at distance $3$ from $v$ at time $3$ and the walk is periodic with period $6$. 

Yet another pair of examples is given by two instances of partial geometries with parameters $(5,5,2)$. (See e.g.\ \cite{ColDin2007} for information on this type of incidence structure.) The corresponding graph is the bipartite point-line incidence graph of the partial geometry. The specific instances can be found in \cite{LinSch1981} and \cite{Ved2021}, and have as point set the elements of the finite field $\mathbb{F}_{81}$ and the line set consists of certain $6$-subsets of its elements, where the incidence relation is given by inclusion. For both instances, when starting at any point (line) in the point-line incidence graph, there is peak state transfer at time $3$ to all lines (points) at distance $3$ in that graph. There is also peak state transfer to $75$ points (lines) at distance $2$ and at time $6$ to $30$ other points, with the walk being periodic with period $12$.

\section{State transfer in vertex-face walks in grids}\label{sec:grids}

In this section, we discuss state transfer in vertex-face walks on toroidal grids. For $n,m \geq 3$, the \textsl{(toroidal) $(n,m)$-grid} is an embedding of $C_n \boxprod C_m$ in the torus. We saw an example of the $(4,6)$-grid in Figure \ref{fig:peak_ST_illustration} in Section \ref{sec:intro}.  The definition can be extended to where $n$ and $m$ can also take on the values $1$ and $2$, in which case the embeddings are those of multigraphs; if $n=2$ or $m=2$, the corresponding multigraph has doubled edges and if $n=1$ or $m=1$ it has loops. A precise description in terms of rotation systems is given, for instance, in \cite{GuoSch2024}. We denote the vertices of the $(n,m)$-grid by the elements of the group $V:= \ints_n \times \ints_m$. (Alternatively, the $(n,m)$-grid is an embedding of the Cayley graph of $\ints_n \times \ints_m$ with generating set $\{\pm (1,0),\pm (0,1)\}$.)

In Section \ref{subsec:grids_periodicity_classification}, we classify periodicity as well as perfect state transfer of the vertex-face walk on toroidal grids. In Section \ref{subsec:peakST_grids_inf_fam}, we show that every $(4,n)$-grid admits peak state transfer for every $n \geq 1$.

\subsection{Classification of periodicity}
\label{subsec:grids_periodicity_classification}

The authors have shown in a previous paper \cite{GuoSch2024} that the $(1,m)$- and $(2,m)$-grids admit perfect state transfer and periodicity. Here, we will complete the classification of toroidal grids on which the vertex-face walks admits periodicity. The main result of this section is the following theorem:
\begin{theorem}
\label{thm:toroidal_periodicity_characterisation}
The vertex-face walk on the toroidal $(n,m)$-grid is periodic if and only if $\min\{n,m\} \leq 2$ or $n = m = 4$.
\end{theorem}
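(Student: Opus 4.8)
The plan is to reduce periodicity of the vertex-face walk on the $(n,m)$-grid to a spectral condition via Theorem~\ref{thm:periodicity_characterisation}, and then to analyze that condition using the explicit eigenvalues of the projected transition matrix $B$. Since $C_n\boxprod C_m$ is a Cayley graph of the abelian group $\ints_n\times\ints_m$, its adjacency matrix (and, as I will argue, the matrices $N^TM$ and $B=2N^TMM^TN-I$ arising from the vertex-face embedding) is diagonalized by the characters of $\ints_n\times\ints_m$. First I would compute the discriminant matrix $N^TM$ explicitly: each vertex $(a,b)$ sits on four faces of the grid, and one checks that $N^TM$ is a convolution operator on $\ints_n\times\ints_m$ whose symbol at the character indexed by $(j,k)$ is a simple trigonometric expression in $\cos(2\pi j/n)$ and $\cos(2\pi k/m)$ (this computation is carried out in \cite{GuoSch2024}; I would cite it). Feeding this through $B=2DD^*-I$ gives the eigenvalues of $B$ as explicit functions $\theta_{j,k}$ of $\cos(2\pi j/n)$ and $\cos(2\pi k/m)$, and because the walk is vertex-transitive, the eigenvalue support $\Lambda(u)$ is the full set $\{\theta_{j,k}\}$ for every vertex $u$.

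The next step is to invoke Theorem~\ref{thm:periodicity_characterisation}: the walk is periodic if and only if every $\theta_{j,k}$ is a cosine of a rational multiple of $\pi$. So the whole problem becomes: \emph{for which pairs $(n,m)$ is every eigenvalue $\theta_{j,k}$ of the form $\cos(p\pi/q)$?} The eigenvalues $\theta_{j,k}$ are, up to affine normalization, rational linear combinations of $\cos(2\pi j/n)$ and $\cos(2\pi k/m)$ — typically of the shape (constant) plus a rational multiple of a product or sum of two such cosines. Here Lemma~\ref{lem:conway_jones} (the Conway--Jones result) is the workhorse: a rational linear combination of two cosines of rational multiples of $\pi$ is itself a cosine of a rational multiple of $\pi$ only in very restricted circumstances, essentially forcing the individual cosines to lie in $\{0,\pm\tfrac12,\pm1\}$ or to come from the exceptional $\pi/5$ relation. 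Running this case analysis: if $\min\{n,m\}\le 2$ the relevant cosine collapses (it is $\pm1$), and one checks directly that all $\theta_{j,k}$ are then $\cos(p\pi/q)$ — recovering the known periodicity of the $(1,m)$- and $(2,m)$-grids from \cite{GuoSch2024}. For $n,m\ge 3$ I would show that periodicity forces both $\cos(2\pi/n)$ and $\cos(2\pi/m)$ into the rational set $\{0,\pm\tfrac12,\pm1\}$ (the $\pi/5$ exception has to be ruled out separately, using that the coefficients in $\theta_{j,k}$ are not in the right proportion, or by a direct check that $n=5$ or $m=5$ fails), hence $n,m\in\{3,4,6\}$; then a finite check over the nine pairs $(n,m)\in\{3,4,6\}^2$ — using the explicit $\theta_{j,k}$ — shows that only $n=m=4$ survives, while $(3,3),(3,4),(3,6),(4,6),(6,6)$ each produce an eigenvalue that is not a cosine of a rational multiple of $\pi$.

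The main obstacle I anticipate is the case analysis for $n,m\ge 3$: one must carefully extract, from the explicit formula for $\theta_{j,k}$, exactly which rational linear combinations of $\cos(2\pi j/n)$ and $\cos(2\pi k/m)$ occur, make sure that Lemma~\ref{lem:conway_jones} applies to each (it is stated only for at most two cosines, so the affine-constant term must be absorbed correctly), and handle the $\pi/5$ exceptional family — since $\cos(\pi/5)$ and $\cos(2\pi/5)$ \emph{are} cosines of rational multiples of $\pi$, one cannot simply discard $n=5$; instead one must check the full list of eigenvalues $\theta_{j,k}$ for $n=5$ and see that some other $\theta_{j,k}$ (not of the exceptional shape) is irrational non-cosine-type. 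A secondary bookkeeping point is confirming that $B$ is genuinely a convolution operator on the group even though the face structure of the torus embedding is not literally a Cayley structure; this is where I lean on the detailed setup in \cite{GuoSch2024}. Once the spectral condition is pinned down, everything else is a finite verification, and the perfect-state-transfer refinement (promised in the section introduction) follows by additionally checking strong cospectrality via the character eigenvectors.
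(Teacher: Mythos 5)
There is a genuine gap in the reduction to $n,m\in\{3,4,6\}$. Your plan is to feed the eigenvalues $\lambda_{k,\ell}=\tfrac12\bigl(\cos\tfrac{2\pi k}{n}+1\bigr)\bigl(\cos\tfrac{2\pi \ell}{m}+1\bigr)-1$ into Lemma~\ref{lem:conway_jones} and conclude that $\cos\tfrac{2\pi}{n}$ and $\cos\tfrac{2\pi}{m}$ must lie in $\{0,\pm\tfrac12,\pm1\}$. But for generic $n,m\ge 3$ the periodicity condition $\lambda_{k,\ell}=\cos(2\pi s_{k,\ell})$ with $s_{k,\ell}\in\rats$, once the product is expanded via $\cos A\cos B=\tfrac12\cos(A-B)+\tfrac12\cos(A+B)$, becomes a rational linear relation among \emph{five} cosines of rational multiples of $\pi$ (namely $\cos(A\pm B)$, $\cos A$, $\cos B$, and $\cos(2\pi s)$). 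Lemma~\ref{lem:conway_jones} as stated (and as used in the paper) covers at most two cosines, and even the full Conway--Jones Theorem~7 stops at four; so the "workhorse" does not apply to the equation you actually obtain, and the issue is not merely absorbing the affine constant. There is no eigenvalue available that collapses the product to a two-cosine relation unless one of the factors vanishes, i.e.\ unless $4\mid n$ or $4\mid m$ so that one may take $k=n/4$ --- and that divisibility is precisely what your argument has no way to establish.

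The paper closes this gap with a separate, non-spectral step: it shows (Remark~\ref{rem:U_periodic_integer_coeffs}, Lemma~\ref{lem:trace_U^2}, Lemma~\ref{lem:periodic_implies_div8}) that periodicity forces the characteristic polynomial of the rational matrix $U$ to have integer coefficients, computes $\tr(U)=nm$ and $\tr(U^2)=\tfrac{9nm}{4}$, and deduces that the coefficient $\tfrac18 nm(4nm-9)$ is an integer only if $8\mid nm$. This guarantees $4\mid n$ or $4\mid m$, after which the choice $k=n/4$, $\ell=1$ reduces the condition to the genuine two-cosine equation $\cos\tfrac{2\pi}{m}-2\cos(2\pi s)=1$, where Lemma~\ref{lem:conway_jones} does apply and yields $m=4$ (the cases $m\in\{3,6\}$ give $\cos(2\pi s)\in\{-\tfrac14,-\tfrac34\}$, which are not cosines of rational multiples of $\pi$). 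Your remaining ingredients --- vertex-transitivity giving full eigenvalue support, Theorem~\ref{thm:periodicity_characterisation}, the explicit eigenvalues, and the final finite verification for $(4,4)$ --- all match the paper, but without something playing the role of the divisibility lemma the case analysis cannot be completed with the cited tools.
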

Since the automorphism group of the $(n,m)$-grid acts transitively on its vertex set (i.e.\ ``every vertex looks the same''), periodicity at one vertex implies periodicity at every vertex. Moreover, by \cite[Theorem 5.2]{GuoSch2024}, since the number of faces of the $(n,m)$-grid is equal to the number of vertices, periodicity at every vertex at time $\tau$ is equivalent to the transition matrix $U$ having the property that $U^\tau = I$. We will simply say that the vertex-face walk on a toroidal grid is `periodic' if these (equivalent) properties occur. Note also that perfect state transfer from vertex $u$ to vertex $v$ at time $\tau$ implies periodicity at $u$ and $v$ at time $2\tau$, since perfect state transfer is symmetric. This means that if periodicity does not occur, perfect state transfer does not occur either.

We will use the remainder of this subsection to prove the theorem above. In Figure \ref{fig:periodicity_proof_diagram}, we have provided a diagram that shows how the results in this section are related to each other, culminating in the proof of Theorem \ref{thm:toroidal_periodicity_characterisation}. In the results on the left side of the diagram, we use the characteristic polynomial of the transition matrix $U$ in order to rule out periodicity from occuring in $(n,m)$-grids whenever $nm$ is not divisible by $8$. In the results on the right side of the figure, we then argue that if some grid admits periodicity, and if $n$ or $m$ is divisible by $4$, then it must be the case that $n = m = 4$. This part of the argument relies on our characterization of periodicity in two-reflection walks (Theorem \ref{thm:periodicity_characterisation}). We also make use of Lemma \ref{lem:conway_jones} in order to solve equations involving cosines of rational multiples of $\pi$.

\begin{figure}
\centering
\begin{tikzpicture}
		\node[draw, thick] (0) at (0, 0) {\textbf{Thm \ref{thm:toroidal_periodicity_characterisation}}};
		\node[draw] (1) at (-2.5, 0) {Lem \ref{lem:periodic_implies_div8}};
		\node[draw] (2) at (-5, 1) {Lem \ref{lem:trace_U^2}};
		\node[draw] (3) at (-5, -1) {Rem \ref{rem:U_periodic_integer_coeffs}};
		\node[draw] (4) at (2.5, 1) {Prop \ref{prop:4_4grid_periodic}};
		\node[draw] (5) at (2.5, -1) {Lem \ref{lem:periodic_implies_div8}};
		\node[draw] (6) at (5, 1) {Thm \ref{thm:periodicity_characterisation}};
		\node[draw] (7) at (5, -1) {Lem \ref{lem:cycle_eigenvectors}};
		\node[draw] (8) at (2.5, -2) {Lem \ref{lem:conway_jones}};
        \node[draw] (9) at (2.5, 2) {Thm \ref{thm:PeakSTchar}};
        
        \draw[->] (1.east) to (0.west);
        \draw[->] (2.south east) to (1.north west);
        \draw[->] (3.north east) to (1.south west);
        \draw[->] (4.south west) to (0.north east);
        \draw[->] (5.north west) to (0.south east);
        \draw[->] (6.west) to (4.east);
        \draw[->] (7.west) to (5.east);
        \draw[->] (8.north) to (5.south);
        \draw[->] (7.north west) to (4.south east);
        \draw[->] (6.south west) to (5.north east);
        \draw[->] (9.south) to (4.north);
\end{tikzpicture}
\caption{Organization of the building blocks for the proof of Theorem \ref{thm:toroidal_periodicity_characterisation}.   \label{fig:periodicity_proof_diagram}}
\end{figure}
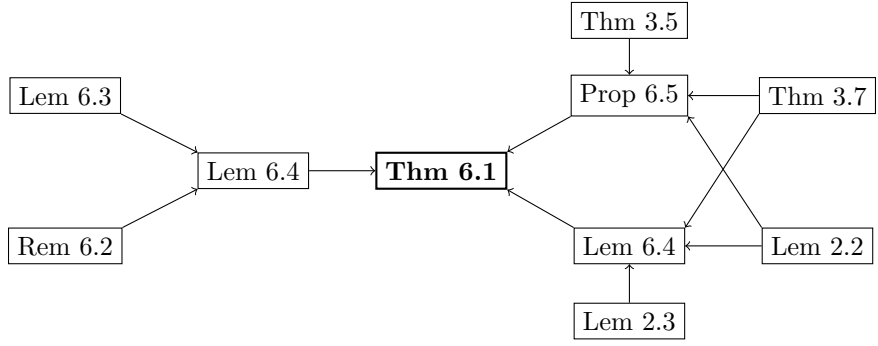

For $n \geq 1$, let $\zeta$ be a primitive $n$-th root of unity. The \textsl{$n$-th cyclotomic polynomial} $\Phi_n$, defined by
\[
\Phi_n(x) = \prod_{k \in [n]:\, \gcd(k,n) = 1} \left(x - \zeta^k \right),
\]
is irreducible over $\rats$. As such, $\Phi_n$ is the minimal polynomial of $\zeta$ over $\rats$. Moreover, all coefficients of $\Phi_n$ are integer. See \cite{DumFoo2004}, in particular Section 13.6. We observe the following, where $\phi(A,x)$ denotes the characteristic polynomial of a matrix $A$.

\begin{remark}
\label{rem:U_periodic_integer_coeffs}
If $A$ is an $n \times n$ matrix, with rational entries, such that $A^\tau = I$ for some integer $\tau \geq 0$, then all eigenvalues of $A$ are roots of unity, and the characteristic polynomial $\phi(A,x)$ of $A$ factors into cyclotomic polynomials over $\rats$. This means in particular that all coefficients of $\phi(A,x)$ are integer.
\end{remark}

Since the transition matrix $U$ of the vertex-face walk on the $(n,m)$-grid has rational entries, we can use Remark \ref{rem:U_periodic_integer_coeffs} to check if periodicity can occur: if $\phi(U,x)$ has a coefficient that is non-integer, then the grid is not periodic. We will use this approach to rule out most of the toroidal $(n,m)$-grids from being periodic.

For $U$, which is a $4nm \times 4nm$ matrix, we can write
\[
\phi(U,x) = \sum_{i=0}^{4nm} c_i x^i = \prod_{j=1}^{4nm} (x - \lambda_j),
\]
i.e.\ the $c_i$ are the coefficients of $\phi(U,x)$, and the $\lambda_j$ are the eigenvalues of $U$ (with multiplicity). It is clear that $c_{4nm} = 1$ and $c_{4nm-1} = -\sum_{j}\lambda_j = -\tr(U)$. Moreover,
\begin{equation}
\label{eq:char_3rd_coeff}
c_{4nm-2} = \sum_{j < k}\lambda_j\lambda_k = \frac{1}{2}\left(\sum_{j,k=1}^{4nm} \lambda_j\lambda_k - \sum_{j=1}^{4nm}{\lambda_j^2}\right) = \frac{\tr(U)^2 - \tr(U^2)}{2}.
\end{equation}

To compute the coefficients $c_{4nm-1}$ and $c_{4nm-2}$, we need to determine the traces of $U$ and $U^2$. For $n,m \geq 3$, the facial boundary walks of the $(n,m)$-grid are cycles in the graph (we say that the embedding is \textsl{circular}; note that this is not the case if $n=1$ or $2$). With the expression for the trace of the transition matrix $U$ of the vertex-face walk, as given in \cite[Lemma 2.3]{Zha2021}, we find that
\begin{equation}
\label{eq:trace_U}
\tr(U) = \frac{2|V||F|}{|E|} - 2|V| - 2|F| + |\cA| = nm - 4nm + 4nm = nm.
\end{equation}
This means that the coefficient $c_{4nm-1}$ is integer for all $n,m$. We will now compute the value for $c_{4nm-2}$, which turns out to be non-integer if $nm$ is not divisible by $8$.

For $n,m \geq 3$, let $C \in \{0,1\}^{V \times F}$ be the \textsl{vertex-face incidence matrix} of the vertex-face walk on the $(n,m)$-grid, where
\begin{equation}
\label{eq:C_definition}
C(v,f) = \begin{cases}
$1$ &\text{if $v$ is on the boundary of $f$;} \\
$0$ &\text{otherwise.}
\end{cases}
\end{equation}
Equivalently, $C = 4D$, where $D = N^TM$ is the discriminant matrix of the vertex-face walk.  We make use of the vertex-face incidence matrix to determine the trace of $U^2$:
\begin{lemma}
\label{lem:trace_U^2}
For $n,m \geq 3$, if $U$ is the transition matrix of the toroidal $(n,m)$-grid, then
\[
\tr(U^2) = \frac{9nm}{4}.
\]
\end{lemma}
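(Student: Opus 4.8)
The plan is to reduce $\tr(U^2)$ to two purely combinatorial sums attached to the vertex-face incidence matrix $C$ of the grid (as in \eqref{eq:C_definition}), and then to carry out the count. Write $U = (2MM^T - I)(2NN^T - I)$ and set $Q = MM^T$, $P = NN^T$; both are orthogonal projections, so $P^2 = P$ and $Q^2 = Q$. Expanding the product and using these relations gives
\[
U^2 = 16\,QPQP - 8\,QPQ - 8\,PQP - 4\,QP + 4\,PQ + I .
\]
Taking traces and using cyclic invariance together with $P^2=P$, $Q^2=Q$ (which forces $\tr(QPQ) = \tr(PQP) = \tr(PQ) = \tr(QP)$), the lower-order terms collapse and one is left with
\[
\tr(U^2) = 16\,\tr\!\bigl((QP)^2\bigr) - 16\,\tr(QP) + 4nm ,
\]
where $\tr(I) = |\cA| = 4nm$.

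Next I would rewrite the two remaining traces via the discriminant matrix $D = N^T M$, which for $n,m \ge 3$ satisfies $C = 4D$ because every vertex and every face of the grid has degree $4$. Since $M^T N N^T M = (M^T N)(N^T M) = D^T D$, cyclic invariance gives $\tr(QP) = \tr(D^T D) = \sum_{v,f} D(v,f)^2$ and $\tr\!\bigl((QP)^2\bigr) = \tr\!\bigl((D^T D)^2\bigr) = \sum_{f,f'} (D^T D)(f,f')^2$, the latter using that $D^T D$ is symmetric. In terms of $C$ this reads
\[
\tr(QP) = \tfrac{1}{16}\sum_{v,f} C(v,f)^2, \qquad \tr\!\bigl((QP)^2\bigr) = \tfrac{1}{256}\sum_{f,f'} \bigl(C^T C\bigr)(f,f')^2 ,
\]
and here $C(v,f) \in \{0,1\}$ since the embedding is circular for $n,m\ge3$, while $(C^T C)(f,f') = \beta(f,f')$ is the number of vertices lying on the boundary of both $f$ and $f'$.

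It then remains to count. Each face is a quadrilateral with $4$ boundary vertices, so $\sum_{v,f} C(v,f) = 4nm$ and $\tr(QP) = \tfrac{nm}{4}$. For the second sum I would classify ordered pairs $(f,f')$ of faces: $f = f'$ contributes $\beta = 4$ ($nm$ pairs); $f \ne f'$ sharing an edge contributes $\beta = 2$, and for $n,m\ge3$ each face has exactly four such neighbours (its neighbours in the toroidal grid structure on the face set), all distinct and distinct from $f$, giving $4nm$ ordered pairs; $f \ne f'$ sharing exactly one vertex (the ``diagonal'' neighbours) contributes $\beta = 1$, and there are again $4nm$ such ordered pairs; all remaining pairs contribute $0$. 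Hence
\[
\sum_{f,f'} \bigl(C^T C\bigr)(f,f')^2 = 16\cdot nm + 4 \cdot 4nm + 1 \cdot 4nm = 36nm ,
\]
so $\tr\!\bigl((QP)^2\bigr) = \tfrac{36nm}{256} = \tfrac{9nm}{64}$, and substituting into the reduction formula gives
\[
\tr(U^2) = 16\cdot\tfrac{9nm}{64} - 16\cdot\tfrac{nm}{4} + 4nm = \tfrac{9nm}{4} .
\]

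The algebra of the first two paragraphs is routine; the one point that needs care is the case analysis. One must verify that for $n,m\ge3$ each face has exactly four edge-neighbours and four vertex-only neighbours, all pairwise distinct and distinct from the face itself --- this uses $1 \not\equiv -1 \pmod n$ and $1 \not\equiv -1 \pmod m$ --- and that two faces sharing an edge share exactly the two endpoints of that edge and no other vertex. (A useful consistency check: when $n=m=3$ all $9nm = 81 = (nm)^2$ ordered pairs of faces share at least one vertex.)
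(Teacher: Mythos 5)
Your proposal is correct and follows essentially the same route as the paper: expand $U^2$, reduce to $\tr(PQ)$ and $\tr\bigl((PQ)^2\bigr)$ via cyclicity and idempotence, pass to the vertex-face incidence matrix $C=4D$, and count shared incidences. The only (cosmetic) difference is that you count, for each face, the vertices it shares with other faces, while the paper counts, for each vertex, the faces it shares with other vertices; by the self-duality of the toroidal grid both counts give $\tr\bigl((CC^T)^2\bigr)=\tr\bigl((C^TC)^2\bigr)=36nm$ and hence the same answer.
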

\begin{proof}

By writing $U^2$ in terms of the orthogonal projections $P$ and $Q$ we find
\[
\begin{split}
U^2 &= (2P - I)(2Q - I)(2P - I)(2Q - I) \\
&= 16PQPQ - 8PQP - 8QPQ - 4PQ + 4QP + I,
\end{split}
\]
where the identities $P^2 = P$ and $Q^2 = Q$ were used to reduce the expression. By the cyclic property of the trace,
\[
\tr(PQP) = \tr(PQ) = \tr(QP) = \tr(QPQ),
\]
so that
\begin{equation}
\label{eq:trace_U^2_PQ}
\tr(U^2) = 16\tr(PQPQ) - 16\tr(PQ) + 4nm,
\end{equation}
as $\tr(I) = 4nm$. We then plug in $P = MM^T$ and $Q = NN^T$ and we apply the cyclic trace property once more to find
\[
\tr(PQ) = \tr(MM^TNN^T) = \tr(N^TMM^TN) = \tr(DD^T)
\]
and similarly
\[
\tr(PQPQ) = \tr(DD^TDD^T)
\]
We can put these values into \eqref{eq:trace_U^2_PQ} to obtain
\begin{align}
\nonumber \tr(U^2) &= 16\tr(DD^TDD^T) - 16\tr(DD^T) + 4nm \\
\label{eq:trace_U^2_CCT} &= \frac{1}{16}\tr(CC^TCC^T) - \tr(CC^T) + 4nm,
\end{align}
where for the second equality, we used the fact that $D = \frac{1}{4}C$, where $C$ is the vertex-face incidence matrix of the toroidal $(n,m)$-grid. What remains is to determine the traces of $CC^T$ and $CC^TCC^T$.

Since $C$ is a binary matrix, we can see from \eqref{eq:C_definition} that
\[
(CC^T)(u,v) = \sum_{f \in F} C(u,f) C(v,f) = \#\{f \in F : \text{both $u$ and $v$ are incident to $f$}\}.
\]
From Figure \ref{fig:u_nbhood_grid} we deduce that for all $u,v \in V$:
\[
(CC^T)(u,v) = \begin{cases}
4 & \text{if $u = v$;} \\
2 & \text{if $uv \in E$;} \\
1 & \text{if $u$ and $v$ lie on diagonally opposite corners of a face;} \\
0 & \text{otherwise.}
\end{cases}
\]
In particular, each row of $CC^T$ contains exactly one $4$ (which is on the diagonal), four $2$'s and four $1$'s. Since the $(n,m)$-grid has $nm$ vertices, we find that $\tr(CC^T) = 4nm$. Finally, every diagonal $(u,u)$-entry of $CC^TCC^T$ is equal to
\[
\sum_{v \in V} ((CC^T)(u,v))^2 = 1 \cdot 4^2 + 4 \cdot 2^2 + 4 \cdot 1^2 = 36,
\]
so that $\tr(CC^TCC^2) = 36nm$. When we substitute these values for the traces of $CC^T$ and $CC^TCC^T$ in \eqref{eq:trace_U^2_CCT}, we finally obtain
\[
\tr(U^2) = \frac{9nm}{4} - 4nm + 4nm = \frac{9nm}{4}.
\]
\begin{figure}
\centering
\begin{tikzpicture}[scale=.8]
	\begin{pgfonlayer}{nodelayer}
		\node (0) [label=below left:$u$] at (0, 0) {};
		\node (1) at (-2, 0) {};
		\node (2) at (2, 0) {};
		\node (3) at (0, 2) {};
		\node (4) at (-2, 2) {};
		\node (5) at (2, 2) {};
		\node (6) at (0, -2) {};
		\node (7) at (-2, -2) {};
		\node (8) at (2, -2) {};
		\node (9) at (-2.5, 2) {};
		\node (10) at (-2.5, 0) {};
		\node (11) at (-2.5, -2) {};
		\node (12) at (-2, 2.5) {};
		\node (13) at (0, 2.5) {};
		\node (14) at (2, 2.5) {};
		\node (15) at (2.5, 2) {};
		\node (16) at (2.5, 0) {};
		\node (17) at (2.5, -2) {};
		\node (18) at (2, -2.5) {};
		\node (19) at (0, -2.5) {};
		\node (20) at (-2, -2.5) {};
	\end{pgfonlayer}
	\begin{pgfonlayer}{edgelayer}
		\draw (9.center) to (15.center);
		\draw (10.center) to (16.center);
		\draw (11.center) to (17.center);
		\draw (12.center) to (20.center);
		\draw (13.center) to (19.center);
		\draw (14.center) to (18.center);
	\end{pgfonlayer}
    \filldraw [black]
    	(0) circle (3pt);
    \filldraw [red]
        (1) circle (3pt)
        (2) circle (3pt)
        (3) circle (3pt)
        (6) circle (3pt);
    \filldraw [blue]
        (4) circle (3pt)
        (5) circle (3pt)
        (7) circle (3pt)
        (8) circle (3pt);
\end{tikzpicture}
\caption{In the toroidal $(n,m)$-grid, if $n,m \geq 3$, every vertex $u$ is incident to four disctinct faces, and has four neighbours (in red) that each share exactly two faces with $u$, and 4 vertices at distance $2$ (in blue) that each share $1$ face with $u$.
\label{fig:u_nbhood_grid}}
\end{figure}
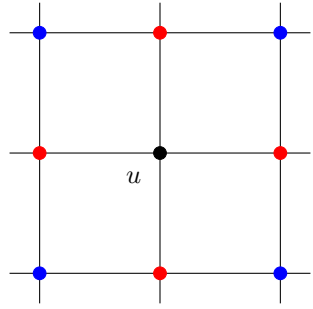
\end{proof}

This completes the first part of our classification of periodicity:

\begin{lemma}
\label{lem:periodic_implies_div8}
For $n,m \geq 3$, if the vertex-face walk on the toroidal $(n,m)$-grid is periodic, then $nm$ is divisible by $8$.
\end{lemma}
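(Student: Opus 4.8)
The plan is to combine the trace computations already in hand with the integrality constraint coming from periodicity. Recall that, as explained in the discussion preceding this lemma, the $(n,m)$-grid has equally many vertices and faces, so by \cite[Theorem 5.2]{GuoSch2024} periodicity (at every vertex) is equivalent to $U^\tau = I$ for some integer $\tau$. Since $U$ has rational entries, Remark \ref{rem:U_periodic_integer_coeffs} then tells us that every coefficient $c_i$ of the characteristic polynomial $\phi(U,x) = \sum_{i=0}^{4nm} c_i x^i$ is an integer; in particular $c_{4nm-2} \in \ints$.

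Next I would substitute the known traces into the formula \eqref{eq:char_3rd_coeff}. By \eqref{eq:trace_U} we have $\tr(U) = nm$, and by Lemma \ref{lem:trace_U^2} we have $\tr(U^2) = \tfrac{9nm}{4}$, so
\[
c_{4nm-2} = \frac{\tr(U)^2 - \tr(U^2)}{2} = \frac{(nm)^2 - \tfrac{9nm}{4}}{2} = \frac{nm(4nm - 9)}{8}.
\]
Since $4nm - 9$ is odd, it is coprime to $8$, so $c_{4nm-2} \in \ints$ forces $8 \mid nm$, which is exactly the claim.

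There is no real obstacle remaining: all the genuine work lives in Lemma \ref{lem:trace_U^2} (the local combinatorial analysis of how faces meet at a vertex, Figure \ref{fig:u_nbhood_grid}), and the present lemma is just the arithmetic payoff. The only point requiring a moment's care is the justification that periodicity yields $U^\tau = I$ rather than merely a scalar multiple of $I$; this is handled by the equal-vertices-and-faces property of toroidal grids cited above, so it can be invoked directly.
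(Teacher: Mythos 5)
Your proposal is correct and follows exactly the paper's argument: substitute $\tr(U)=nm$ and $\tr(U^2)=\tfrac{9nm}{4}$ into \eqref{eq:char_3rd_coeff} to get $c_{4nm-2}=\tfrac{1}{8}nm(4nm-9)$, then use Remark \ref{rem:U_periodic_integer_coeffs} and the oddness of $4nm-9$ to force $8\mid nm$. The reduction of periodicity to $U^\tau=I$ via the equal vertex/face count is also handled as in the paper, so there is nothing to add.
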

\begin{proof}
By \eqref{eq:char_3rd_coeff}, \eqref{eq:trace_U} and Lemma \ref{lem:trace_U^2}, the coefficient for $x^{4nm - 2}$ in the characteristic polynomial of $U$ is equal to
\[
\frac{\tr(U)^2 - \tr(U^2)}{2} = \frac{1}{2}\left((nm)^2 - \frac{9nm}{4}\right) = \frac{1}{8}nm(4nm - 9).
\]
If the $(n,m)$-grid is periodic, this coefficient must be integer by Remark \ref{rem:U_periodic_integer_coeffs}. Since $4nm - 9$ is odd, this happens if and only if $nm$ is divisible by $8$.
\end{proof}
\noindent In other words, if $nm$ is not divisible by $8$, then the $(n,m)$-grid is not periodic.

Now, using the sequence of matrices
\[
B_t = N^T U^t N, \quad t \in \ints_{\geq 0},
\]
we will prove that if the $(n,m)$-grid is periodic, $n,m \geq 3$ and $n$ or $m$ is divisible by $4$, that then $n = m = 4$. In order to do so, we will apply Corollary \ref{thm:periodicity_characterisation}. We first need to determine the eigenvalues of the projected transition matrix $B = B_1$. Recall from Section \ref{sec:prelim} that $B$ can be written in terms of the discriminant matrix $D$, and hence in terms of the vertex-face incidence matrix $C$:
\begin{equation}
\label{eq:B_CCT}
B = 2DD^T - I = \frac{1}{8}CC^T - I.
\end{equation}
By \cite[Lemma 6.2]{GuoSch2024}, the vertices and faces of the toroidal $(n,m)$-grid can be arranged in such a way that we can write
\[
C = (S_n + I_n) \otimes (S_m + I_m),
\]
where $S_k$ denotes the $k \times k$ permutation matrix that sends the standard basis vector $\Ze_{i} \in \cx^{k}$ to $\Ze_{i}$ to $\Ze_{i-1}$ (with the index modulo $k$). We then find that
\begin{align}
\nonumber CC^T &= (S_n + S_n^T + 2I_n) \otimes (S_m + S_m^T + 2I_m) \\
\label{eq:CCT_expression} &= (A_{C_n} + 2I_n) \otimes (A_{C_m} + 2I_m),
\end{align}
where $A_{C_n}$ denotes the adjacency matrix of the $n$-cycle graph. We provided the spectral decomposition of $A_{C_n}$, including the definition of the spectral idempotents $F_{n,k}$, in Lemma \ref{lem:cycle_eigenvectors}, which we will retain here. 

From \eqref{eq:B_CCT}, \eqref{eq:CCT_expression} and Lemma \ref{lem:cycle_eigenvectors}, it follows that the spectrum of $B$ is given by
\[
\sigma(B) = \{\lambda_{k,\ell} \mid 0 \leq k \leq n-1 \text{ and } 0 \leq \ell \leq m-1\},
\]
where
\begin{equation}
\label{eq:toroidal_B_eigs}
\lambda_{k,\ell} \coloneq \frac{1}{2}\left(\cos \frac{2 \pi k}{n} + 1\right)\left(\cos \frac{2 \pi \ell}{m} + 1\right) -1.
\end{equation}
Note that $\sigma(B)$ is the multiset in which each eigenvalue of $B$ occurs with its corresponding multiplicity. The spectral decomposition of $B$ will have the form $\sum_{\theta \in \ev(B)}\theta E_\theta$, where, considering \eqref{eq:CCT_expression}, each $E_\theta$ is written in terms of $A_{C_n}$ and $A_{C_m}$:
\[
E_\theta = \sum_{\substack{k,\ell \\\, \lambda_{k,\ell} = \theta}} F_{n,k} \otimes F_{m,\ell}.
\]
Here, the $F_{n,k}$ and $F_{m,k}$ are as defined in \eqref{eq:cycle_idempotent_entries}. Note that the diagonal elements of these matrices are always strictly positive so that the diagonal entries of the $E_\theta$ are also strictly positive. Hence the eigenvalue support of each vertex $(i,j)$ of the grid is the set of all eigenvalues $\ev(B)$. For periodicity to occur in the toroidal grid, every eigenvalue of $B$ must be a cosine of some rational multiple of $\pi$: considering Remark \ref{rem:periodicity_char_simplified}
the toroidal $(n,m)$-grid is periodic if and only if for all $k$ and $\ell$ there exists some $s_{k,\ell} \in \rats$ such that
\begin{equation}
\label{eq:vxf_grid_cos_equation}
\cos(2\pi s_{k,\ell}) = \frac{1}{2}\left(\cos \frac{2 \pi k}{n} + 1\right)\left(\cos \frac{2 \pi \ell}{m} + 1\right) -1
\end{equation}
We can use this to argue directly that the $(4,4)$-grid is periodic:
\begin{proposition}
\label{prop:4_4grid_periodic}
The vertex-face walk on the toroidal $(4,4)$-grid is $12$-periodic and it does not admit perfect state transfer at any time.
\end{proposition}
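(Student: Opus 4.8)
The plan is to extract the complete spectral decomposition of the projected transition matrix $B$ of the $(4,4)$-grid and then apply Theorem~\ref{thm:periodicity_characterisation} (for periodicity) and the strong-cospectrality clause of Theorem~\ref{thm:PeakSTchar} (for the absence of perfect state transfer). By \eqref{eq:B_CCT} and \eqref{eq:CCT_expression} we have $B = \tfrac18 CC^T - I$ with $CC^T = (A_{C_4}+2I_4)\otimes(A_{C_4}+2I_4)$, and by Lemma~\ref{lem:cycle_eigenvectors} the matrix $A_{C_4}$ has eigenvalues $2,0,-2$ with spectral idempotents $F_{4,0}=\tfrac14 J_4$, $F_{4,1}$, $F_{4,2}$. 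Hence $A_{C_4}+2I_4$ has eigenvalues $4,2,0$, so $CC^T$ has eigenvalues $16,8,4,0$ and $\ev(B) = \{1,\,0,\,-\tfrac12,\,-1\}$, with spectral idempotents $E_1 = F_{4,0}\otimes F_{4,0} = \tfrac1{16}J_{16}$, $E_0 = F_{4,0}\otimes F_{4,1} + F_{4,1}\otimes F_{4,0}$, $E_{-1/2} = F_{4,1}\otimes F_{4,1}$, and, using $F_{4,0}+F_{4,1} = I_4-F_{4,2}$,
\[
E_{-1} \;=\; I_{16} - (I_4-F_{4,2})\otimes(I_4-F_{4,2}) \;=\; I_4\otimes F_{4,2} + F_{4,2}\otimes I_4 - F_{4,2}\otimes F_{4,2}.
\]
The diagonal entries of all four idempotents are strictly positive, so the eigenvalue support of every vertex of the grid equals $\ev(B)$.

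For the periodicity statement, each eigenvalue is a cosine of a rational multiple of $\pi$, namely $1 = \cos\tfrac{0\cdot\pi}{1}$, $-1 = \cos\tfrac{1\cdot\pi}{1}$, $0 = \cos\tfrac{1\cdot\pi}{2}$ and $-\tfrac12 = \cos\tfrac{2\pi}{3}$, so Theorem~\ref{thm:periodicity_characterisation} gives periodicity at every vertex with $\tau = \lcm\{1,1,2,3\} = 6$; the integers $\tau\cdot\tfrac{p(\theta)}{q(\theta)}$ equal $0,6,3,4$, which are not all of the same parity, so the period at each vertex is $2\tau = 12$. Since all vertices share the same eigenvalue support they all have period $12$, and because the $(4,4)$-grid has as many faces as vertices this is equivalent, by \cite[Theorem~5.2]{GuoSch2024}, to $U^{12} = I$ with $U^t \ne I$ for $0 < t < 12$; hence the walk is $12$-periodic. (Equivalently: $|B_t(u,u)| = 1$ forces $T_t(\theta) = 1$ for every $\theta \in \ev(B)$ since $T_t(1) = 1$, and $T_t(0) = T_t(-1) = T_t(-\tfrac12) = 1$ exactly when $4\mid t$, $2\mid t$ and $3\mid t$, that is, when $12\mid t$.)

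For the absence of perfect state transfer, recall from the last sentence of Theorem~\ref{thm:PeakSTchar} (equivalently from Lemma~\ref{lem:technical_lemma}) that perfect state transfer from $u$ to $v$ requires $u$ and $v$ to be strongly cospectral with respect to $B$, in particular $E_{-1}\Ze_u = \pm E_{-1}\Ze_v$; taking the inner product with $\Ze_u$ and using that $E_{-1}$ is a symmetric idempotent, this forces $E_{-1}(u,v) = \pm E_{-1}(u,u)$. Now, using $F_{4,2}(a,b) = \tfrac14(-1)^{a+b}$ together with the Kronecker expression for $E_{-1}$ above, a short case check over $V = \ints_4\times\ints_4$ shows that $E_{-1}(u,u) = \tfrac7{16}$ for every vertex $u$, while for distinct $u,v$ one has $E_{-1}(u,v) = \pm\tfrac3{16}$ when $u$ and $v$ agree in exactly one coordinate and $E_{-1}(u,v) = \pm\tfrac1{16}$ otherwise. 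Since $\tfrac7{16} \notin \{\pm\tfrac1{16},\pm\tfrac3{16}\}$, no two distinct vertices are strongly cospectral, and therefore the walk admits no perfect state transfer at any time.

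The computations for the spectrum of $B$ and for the periodicity count are routine given \eqref{eq:CCT_expression} and Lemma~\ref{lem:cycle_eigenvectors}; the only step requiring a little care is simplifying $E_{-1}$ to the clean Kronecker form $I_4\otimes F_{4,2} + F_{4,2}\otimes I_4 - F_{4,2}\otimes F_{4,2}$ and then running the short case analysis of its entries. I do not foresee a genuine obstacle here — the main risk is a bookkeeping slip in the Kronecker indexing of the spectral idempotents.
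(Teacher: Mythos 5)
Your proof is correct. The periodicity half is essentially the paper's argument: you extract $\ev(B)=\{1,0,-\tfrac12,-1\}$ from the Kronecker structure of $CC^T$, note that every eigenvalue lies in the support of every vertex, and apply Theorem~\ref{thm:periodicity_characterisation} to get $\tau=6$ with mixed parities of $\tau\cdot p(\theta)/q(\theta)$, hence period $2\tau=12$ (the paper packages the same computation via the solutions $s_{k,\ell}$ of \eqref{eq:vxf_grid_cos_equation} and Remark~\ref{rem:periodicity_char_simplified}, arriving at $\lcm\{1,4,2,3\}=12$ directly); your Chebyshev cross-check is a nice sanity test but redundant.

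For the absence of perfect state transfer you take a genuinely different route. The paper first invokes the monogamy of perfect state transfer together with the vertex-transitivity of the grid to reduce to the single candidate pair $\bigl((0,0),(2,2)\bigr)$, and then shows that the parity condition (ii) of Theorem~\ref{thm:PeakSTchar} fails for that pair (so not even peak state transfer occurs between them). You instead rule out the strong-cospectrality requirement for \emph{every} pair of distinct vertices at once, by computing the entries of the single idempotent $E_{-1}=I_4\otimes F_{4,2}+F_{4,2}\otimes I_4-F_{4,2}\otimes F_{4,2}$ and observing $E_{-1}(u,u)=\tfrac{7}{16}$ while $|E_{-1}(u,v)|\le\tfrac{3}{16}$ for $u\neq v$. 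Your computations check out (the diagonal entries of the four idempotents sum to $\tfrac1{16}+\tfrac14+\tfrac14+\tfrac7{16}=1$, confirming the bookkeeping). What each approach buys: yours is more self-contained, avoiding the appeal to the monogamy theorem of \cite{GuoSch2024}, and it yields the stronger conclusion that no two vertices of the $(4,4)$-grid are strongly cospectral; the paper's argument needs only a sign check on one pair and, as a byproduct, shows there is not even peak state transfer between $(0,0)$ and $(2,2)$. Either proof is acceptable.
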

Though both the periodicity and the absence of perfect state transfer in the proposition above can be verified with a straightforward computer proof, we have provided a written proof in Appendix \ref{app:proofs} as it is enlightening for how Theorem \ref{thm:PeakSTchar} and Theorem \ref{thm:periodicity_characterisation} (or rather Remark \ref{rem:periodicity_char_simplified}) can be applied. Table \ref{tab:4_4grid_periodic} also summarizes the proof for the periodicity at time 12.
\begin{table}
\[
\begin{array}{c|c|c|c}
     (k,\ell) &  \frac{1}{2}\left(\cos \frac{\pi k}{2} + 1\right)\left(\cos \frac{\pi \ell}{2} + 1\right) -1 & s_{k,\ell} & (p_{k,\ell},q_{k,\ell}) \\ \hline
     (0,0) & \phantom{-}1 & 0 & (0,1) \\ 
     (0,1) & \phantom{-}0 & \frac{1}{4} & (1,2) \\
     (0,2) & -1 & \frac{1}{2} & (1,1)\\
     (1,1) & -\frac{1}{2} & \frac{1}{3} & (2,3) \\
\end{array}
\]
\caption{For some pairs $k,\ell \in \{0,1,2,3\}$, we give the values of the right-hand side of \eqref{eq:vxf_grid_cos_equation} and the corresponding solutions $s_{k,\ell}$ in the case of $n = m = 4$. These four pairs provide all distinct eigenvalues of the projected transition matrix of the $(4,4)$-grid. Every solution $s_{k,\ell}$ is rational, so there is periodicity. The values for $p_{k,\ell}$ and $q_{k,\ell}$ given in the last column can be used to show that condition (ii) of Theorem \ref{thm:PeakSTchar} does not hold.\label{tab:4_4grid_periodic}}
\end{table}

Next, we will show that if $n$ or $m$ is divisible by $4$ (and $n,m \geq 3$), the $(4,4)$-grid is the only toroidal $(n,m)$-grid that admits periodicity.

\begin{lemma}
\label{lem:periodic_implies_4_4grid}
For $n,m \geq 3$, if the vertex-face walk on the toroidal $(n,m)$-grid is periodic and either $n$ or $m$ is divisible by $4$, then $n = m = 4$.
\end{lemma}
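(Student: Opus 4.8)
The plan is to assume WLOG that $4 \mid n$ and derive constraints on $m$ using the periodicity characterization (Theorem~\ref{thm:periodicity_characterisation}, or rather its simplified form in Remark~\ref{rem:periodicity_char_simplified}), together with Lemma~\ref{lem:conway_jones}. Since the grid is periodic, the eigenvalue support of every vertex is all of $\ev(B)$, so \emph{every} eigenvalue $\lambda_{k,\ell}$ of the projected transition matrix (given in \eqref{eq:toroidal_B_eigs}) must be a cosine of a rational multiple of $\pi$. The strategy is to pick clever values of $(k,\ell)$ and use that certain $\lambda_{k,\ell}$ must be rational-cosine, which forces $m$ to be small.

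First I would extract the most useful eigenvalues. Taking $k = n/4$ makes $\cos(2\pi k/n) = \cos(\pi/2) = 0$, so $\lambda_{n/4,\ell} = \tfrac12\bigl(\cos\tfrac{2\pi\ell}{m}+1\bigr) - 1 = \tfrac12\cos\tfrac{2\pi\ell}{m} - \tfrac12$. Thus for every $\ell$, the quantity $\tfrac12\cos\tfrac{2\pi\ell}{m} - \tfrac12$ must be a cosine of a rational multiple of $\pi$, hence in particular (being equal to $\cos(2\pi s)$ for rational $s$) we may also take $\ell$ with $\gcd(\ell,m)=1$ so that $\cos\tfrac{2\pi\ell}{m}$ ranges over the conjugates. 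The key point: $2\lambda_{n/4,\ell} + 1 = \cos\tfrac{2\pi\ell}{m}$, and $\lambda_{n/4,\ell} = \cos(\pi p/q)$ for coprime $p,q$. Using the double-angle identity $\cos\tfrac{2\pi\ell}{m} = 2\cos^2\tfrac{\pi\ell}{m} - 1$ is less direct; instead I would argue via Lemma~\ref{lem:conway_jones}: we have $\cos\tfrac{2\pi\ell}{m}$ expressed as $2\cos(\pi p/q) + 1$, a rational linear combination of a single cosine of a rational multiple of $\pi$ plus a rational, equal to another cosine of a rational multiple of $\pi$. Apply Lemma~\ref{lem:conway_jones}(ii) (or the remark following it about two angles) to the equation $\cos\tfrac{2\pi\ell}{m} - 2\cos\tfrac{p\pi}{q} = 1$: either one of the cosines is rational, in which case by Lemma~\ref{lem:conway_jones}(i) the angle lies in $\{0,\tfrac\pi3,\tfrac\pi2\}$ (modulo sign), pinning $m \in \{1,2,3,4,6\}$ after ranging over $\ell$; or the two irrational cosines force the $\pi/5$ relation \eqref{eq:conway_jones_pi5}, but $1$ is not proportional to $\tfrac12$, so this case is impossible. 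This should cut $m$ down to a short list $\{3,4,6\}$ (the values $1,2$ being excluded by hypothesis $n,m\geq 3$).

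Next I would eliminate $m = 3$ and $m = 6$. For these, combine with Lemma~\ref{lem:periodic_implies_div8}: periodicity requires $8 \mid nm$. If $m = 3$ then $8 \mid 3n$ forces $8 \mid n$; if $m = 6$ then $8 \mid 6n$ forces $4 \mid n$ (already assumed) and moreover $4\mid 3n$ so $4 \mid n$, giving $8\mid 2n$, i.e. $4\mid n$ — I'd need to check $8\mid 6n \Leftrightarrow 4 \mid 3n \Leftrightarrow 4\mid n$, consistent, so divisibility alone does not kill $m=6$. For $m \in \{3,6\}$ I would instead produce a \emph{specific} bad eigenvalue: pick $\ell = 1$ (so $\cos\tfrac{2\pi}{m} \in \{-\tfrac12, \tfrac12\}$) and a generic $k$ with $\gcd(k,n)=1$, giving $\lambda_{k,1} = \tfrac12\bigl(\cos\tfrac{2\pi k}{n} + 1\bigr)\cdot c - 1$ with $c = \tfrac12$ or $\tfrac32$; then $\lambda_{k,1}$ is a rational linear combination of $\cos\tfrac{2\pi k}{n}$ plus a rational, set equal to a cosine of a rational multiple of $\pi$, so again Lemma~\ref{lem:conway_jones} applies and forces $\cos\tfrac{2\pi k}{n}$ into a rational value, hence $n \in \{1,2,3,4,6\}$ for all such $k$ — combined with $4 \mid n$ this gives $n = 4$. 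Then with $n = 4$ fixed, rerun the same argument with the roles of $n$ and $m$ swapped (now $\cos\tfrac{2\pi k}{4} = 0$ for $k=1$ reduces the eigenvalues the same way) to force $m \in \{3,4,6\}$, and finally check the three candidates $(4,3)$, $(4,6)$, $(4,4)$ directly against Lemma~\ref{lem:periodic_implies_div8} ($8\mid 12$? yes; $8\mid 24$? yes; $8\mid 16$? yes) — so divisibility alone is insufficient and I must verify $(4,3)$ and $(4,6)$ are genuinely non-periodic by exhibiting an eigenvalue $\lambda_{k,\ell}$ that is not a cosine of a rational multiple of $\pi$, e.g. for $(4,6)$ take $k=1,\ell=1$: $\lambda_{1,1} = \tfrac12(0+1)(\tfrac12+1)-1 = \tfrac34 - 1 = -\tfrac14$, and $-\tfrac14$ is not in $\{0,\pm\tfrac12,\pm1\}$ so by Lemma~\ref{lem:conway_jones}(i) it is not a rational cosine of a rational multiple of $\pi$, contradiction; similarly handle $(4,3)$.

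The main obstacle I anticipate is the bookkeeping in the repeated applications of Lemma~\ref{lem:conway_jones}: each time I write $\lambda_{k,\ell}$ as (rational multiple of a single cosine) $+$ (rational), I must be careful that the single cosine is genuinely $\cos$ of a rational multiple of $\pi$ in the range $[0,\pi/2]$ after reduction, that I correctly track the "irrational vs.\ rational" dichotomy, and that the $\pi/5$ exceptional case is ruled out by a proportionality check every time (the constants $1$, $\tfrac12$, $\tfrac32$ are never proportional to the $\tfrac12$ in \eqref{eq:conway_jones_pi5} in the required way, but this needs a sentence each time). A secondary subtlety is making sure that, for a given $m$, I can always choose $\ell$ coprime to $m$ with $\cos\tfrac{2\pi\ell}{m}$ irrational when $m\notin\{1,2,3,4,6\}$ — this is exactly Lemma~\ref{lem:conway_jones}(i) applied in the contrapositive, so it is clean, but I should state it explicitly. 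Once the reduction to $n,m \in \{3,4,6\}$ is in place, the finish is a finite check of at most a handful of grids, each dispatched by evaluating one eigenvalue and citing Lemma~\ref{lem:conway_jones}(i).
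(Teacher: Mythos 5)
Your proposal is correct and follows essentially the same route as the paper: set $k=n/4$ to reduce to the two-cosine equation $\cos\frac{2\pi\ell}{m}-2\cos(2\pi s)=1$, rule out the $\cos\frac{\pi}{5}-\cos\frac{2\pi}{5}=\frac12$ case of Lemma~\ref{lem:conway_jones} by the mismatch of coefficient magnitudes to force $m\in\{3,4,6\}$, and then eliminate $m\in\{3,6\}$ because the resulting eigenvalue lies in $\{-\tfrac14,-\tfrac34\}$, which is not a cosine of a rational multiple of $\pi$. The only difference is that your intermediate step forcing $n=4$ via a second Conway--Jones application is unnecessary, since the $\lambda_{n/4,1}\in\{-\tfrac14,-\tfrac34\}$ check already works for any $n$ divisible by $4$.
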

\begin{proof}
Assume without loss of generality that $n$ is divisible by $4$, and pick $k = n/4$ and $\ell = 1$. Then $\cos\frac{2\pi k}{n} = 0 $ and \eqref{eq:vxf_grid_cos_equation} can be reduced to the following equation:
\begin{equation}
\label{eq:cos_equation_div_4}
\cos\frac{2\pi}{m} - 2\cos(2\pi s) = 1 
\end{equation}
with $s := s_{k,\ell}$. Since we assume that the toroidal $(n,m)$-grid is periodic, this equation has a rational solution $s$ by Theorem \ref{thm:periodicity_characterisation}. We claim that in that case, both $\cos\frac{2\pi}{m}$ and $\cos(2\pi s)$ have to be rational.

Given that the claim is true, it follows from Lemma \ref{lem:conway_jones} that $m \in \{3,4,6\}$. If $m \in \{3,6\}$, then $\cos\frac{2\pi}{m} = \pm\frac{1}{2}$ so that
\[
\cos(2\pi s) \in \{ -\tfrac{1}{4}, -\tfrac{3}{4}\}  
\]
but then $s$ is irrational, which gives a contradiction. So it must be that $m = 4$, and we can deduce that also $n = 4$ by symmetry (as $m$ is now divisible by $4$), completing the proof.

What remains is to prove the claim: assume that both cosines in \eqref{eq:cos_equation_div_4} are irrational. We can apply the reflections $\theta \mapsto -\theta$ and $\theta \mapsto \pi - \theta$ to the angles in equation \eqref{eq:cos_equation_div_4}, and add some minus signs if necessary to transform the equation into the form
\begin{equation}
\label{eq:cosines_alpha_beta}
\pm \cos\alpha \pm 2\cos\beta = 1,
\end{equation}
where $\alpha,\beta \in (0,\pi/2)$ (and also $\alpha,\beta \neq \pi/3$ since both cosines remain irrational). Now the equation \eqref{eq:cosines_alpha_beta} must be proportional \eqref{eq:conway_jones_pi5} by Lemma \ref{lem:conway_jones}, but this is clearly impossible because $\cos(\alpha)$ and $\cos(\beta)$ appear with distinct magnitudes in \eqref{eq:cosines_alpha_beta} (namely $1$ and $2$), whereas they appear with the same magnitude (namely $1$) in \eqref{eq:conway_jones_pi5}. We conclude that the cosines in \eqref{eq:cos_equation_div_4} cannot both be irrational, and if one of them is rational, they must both be rational, proving the claim.
\end{proof}

With this, we have completed our characterization of periodicity in toroidal $(n,m)$-grids:

\begin{proof}[Proof of Theorem \ref{thm:toroidal_periodicity_characterisation}]
By \cite{GuoSch2024} and Proposition \ref{prop:4_4grid_periodic}, the toroidal $(n,m)$-grid is periodic if $\min\{n,m\} \leq 2$ or $n = m = 4$. Conversely, if the toroidal $(n,m)$-grid is periodic and $n,m \geq 3$, then $nm$ is divisible by $8$ by Lemma \ref{lem:periodic_implies_div8}, so either $n$ or $m$ must be divisible by $4$, and $n = m = 4$ by Lemma \ref{lem:periodic_implies_4_4grid}.
\end{proof}

\subsection{Infinite family of examples of peak state transfer}
\label{subsec:peakST_grids_inf_fam}

In this section, we show that there is peak state transfer in the vertex-face walk on the $(4,n)$-toroidal grids. We already gave an example of such a grid in Figure \ref{fig:peak_ST_illustration}. Its behavior is rigorously treated in the following theorem. Although the proof is quite technical and requires a lot of bookkeeping, it does not not use any particularly new ideas. We have therefore moved the proof into Appendix \ref{app:proofs}.

\begin{theorem}
    \label{thm:4n_grids_peakST}
    For $n \geq 1$, the $(4,n)$-toroidal grid admits peak state transfer. Specifically, for all $(i,j) \in \ints_{4} \times \ints_n$:
    \begin{enumerate}[(a)]
        \item if $n$ is an odd integer, there is peak state transfer from vertex $(i,j)$ to vertices $(i\pm 1,j)$ at time $n$;
        \item if $n = 2m$ for some $m$, there is peak state transfer from vertex $(i,j)$ to vertices $(i\pm 1,j + m)$ at time $m$;
        \item if $n = 4k$ for some $k$, there is peak state transfer from vertex $(i,j)$ to vertices $(i\pm 1,j \pm k)$ at time $k$; 
    \end{enumerate}
\end{theorem}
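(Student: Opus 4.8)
The plan is to apply the peak-state-transfer criterion of \tref{PeakSTchar} directly to the projected transition matrix $B$ of the vertex-face walk on the $(4,n)$-grid, whose spectral decomposition is already available from the analysis leading to \eqref{eq:toroidal_B_eigs}. Since the automorphism group of the $(4,n)$-grid acts transitively on $\ints_4\times\ints_n$ by translation, it suffices to take $(i,j)=(0,0)$ and study peak state transfer from $(0,0)$ to each target vertex. Recall that the eigenvalue attached to the tensor idempotent $F_{4,k}\otimes F_{n,\ell}$ is
\[
\lambda_{k,\ell}=\tfrac12\bigl(\cos\tfrac{\pi k}{2}+1\bigr)\bigl(\cos\tfrac{2\pi\ell}{n}+1\bigr)-1 ,
\]
and that the spectral idempotent for $\theta$ is the sum of the $F_{4,k}\otimes F_{n,\ell}$ over all pairs $(k,\ell)$ with $\lambda_{k,\ell}=\theta$, where $k$ runs over $\{0,1,2\}$ with $F_{4,1}$ the rank-$2$ cycle idempotent of \lref{cycle_eigenvectors}. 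As $\tfrac12(\cos\tfrac{\pi k}{2}+1)$ equals $1,\tfrac12,0$ for $k=0,1,2$, the eigenvalues of $B$ are $\cos\tfrac{2\pi\ell}{n}$ (from $k=0$), $\tfrac12\cos\tfrac{2\pi\ell}{n}-\tfrac12$ (from $k=1$), and $-1$ (from $k=2$, for every $\ell$).

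The computational heart of the argument is the single identity $F_{4,1}(0,\pm1)=0$, together with $F_{4,0}(0,\pm1)=\tfrac14$ and $F_{4,2}(0,\pm1)=-\tfrac14$. Because every target vertex has first coordinate $\pm1$, the $k=1$ block contributes nothing to any entry $E_\theta((0,0),(a,b))$, so each such entry equals $\tfrac14F_{n,\ell}(0,b)$ (with eigenvalue $\cos\tfrac{2\pi\ell}{n}$) plus, for the eigenvalue $-1$, the $k=2$ term $-\tfrac14\sum_\ell F_{n,\ell}(0,b)=-\tfrac14[I_n]_{0,b}$. I would then read off $F_{n,\ell}(0,b)$ from \lref{cycle_eigenvectors} for the three relevant values of $b$: for $b=0$ it is always positive; for $b=m=\tfrac n2$ it has sign $(-1)^\ell$; for $b=\pm k=\pm\tfrac n4$ it is positive, zero, negative according to whether $\ell$ is $\equiv 0$, odd, $\equiv 2$ modulo $4$ (the exceptional index $\ell=\tfrac n2$ of \lref{cycle_eigenvectors} fits each pattern by the parity of $k$). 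Since $[I_n]_{0,0}=1$ while $[I_n]_{0,b}=0$ for $b\neq0$, in case (a) the eigenvalue $-1$ lands in the negative mutual support with entry $-\tfrac14$, whereas in cases (b) and (c) the $k=2$ contribution cancels and $-1$ is recorded only via the $k=0$ index $\ell=\tfrac n2$, where $\cos\tfrac{2\pi\ell}{n}=-1$.

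With the signs in hand the mutual eigenvalue supports become explicit: in case (a), $\Lambda^1=\{\cos\tfrac{2\pi\ell}{n}:0\le\ell\le\tfrac{n-1}2\}$ and $\Lambda^{-1}=\{-1\}$; in case (b), writing the eigenvalues as $\cos\tfrac{\pi\ell}{m}$ for $0\le\ell\le m$, they lie in $\Lambda^1$ or $\Lambda^{-1}$ according to whether $\ell$ is even or odd; in case (c), odd $\ell$ give a zero entry, so only even $\ell=2j$ survive and $\Lambda$ consists of $\cos\tfrac{\pi j}{k}$ for $0\le j\le k$, in $\Lambda^1$ or $\Lambda^{-1}$ by the parity of $j$. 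In each case every element of $\Lambda$ is manifestly a cosine of a rational multiple of $\pi$, giving condition (i) of \tref{PeakSTchar}, and taking $\gamma=1$ verifies condition (ii): the least common multiple $\tau$ of the denominators equals $n$, $m$, $k$ respectively (the index $\ell=1$, resp.\ $j=1$, already forces the full denominator), and the integer $\tau\cdot\tfrac{p(\theta)}{q(\theta)}$ equals $2\ell$, $\ell$, $j$, whose parity matches membership in $\Lambda^{\pm1}$ exactly. Hence \tref{PeakSTchar} yields peak state transfer at time $\tau$, and at all odd multiples of $\tau$. (The choice $\gamma=1$ is in fact forced, since $1=\cos\tfrac01$ always lies in $\Lambda^1$ for a connected vertex-face walk, exactly as in Remark~\ref{rem:gamma=1}.)

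The only real obstacle is the bookkeeping: keeping track of the exceptional cycle-idempotent indices $\ell=0$ and $\ell=\tfrac n2$ in \lref{cycle_eigenvectors} so that the sign of $F_{n,\ell}(0,b)$ obeys a single uniform parity rule, handling the degenerate small cases $n\in\{1,2\}$ (where $C_n$ is a multigraph and $F_{n,n/2}$ is the rank-$1$ idempotent) within the same formulas, and confirming that the only eigenvalue coincidences affecting the sign analysis are those merging a $k=0$ eigenvalue with $-1$; coincidences involving the $k=1$ eigenvalues are harmless, since that block contributes $0$ to the entries under consideration. Once these points are pinned down, the verification of \tref{PeakSTchar}(i)--(ii) is routine.
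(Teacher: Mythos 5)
Your proposal is correct and follows essentially the same route as the paper's proof: reduce to $(0,0)$ by transitivity, use the tensor decomposition of the idempotents into $F_{4,k}\otimes F_{n,\ell}$, exploit $F_{4,1}(0,\pm 1)=0$ to kill the $k=1$ block, read off the signs of $F_{n,\ell}(0,b)$ for $b=0,\tfrac n2,\pm\tfrac n4$, and verify Theorem~\ref{thm:PeakSTchar} with $\tau=n,m,k$ and $\gamma=1$. The only cosmetic difference is that you package the $k=2$ contribution as $-\tfrac14[I_n]_{0,b}$ via $\sum_\ell F_{n,\ell}=I_n$, which is the same computation the paper performs through its explicit formula for $E_{-1}$.
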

Note that the times specified in each of the cases are the first time of peak state transfer occurring; they take on the role of the $\tau$ from the statement of Theorem \ref{thm:PeakSTchar}.

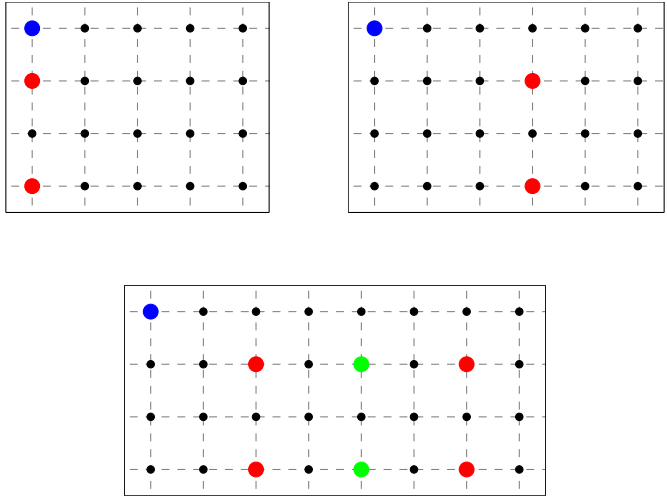
\begin{figure}
\centering
\begin{tikzpicture}[scale=.7]
\begin{pgfonlayer}{nodelayer}
   \node (0) at (-1,1) {};
   \node (1) at (0,1) {};
   \node (2) at (1,1) {};
   \node (3) at (2,1) {};
   \node (4) at (3,1) {};
   \node (5) at (4,1) {};
   \node (6) at (5,1) {};
   \node (7) at (-1,0) {};
   \node (8) at (0,0) {};
   \node (9) at (1,0) {};
   \node (10) at (2,0) {};
   \node (11) at (3,0) {};
   \node (12) at (4,0) {};
   \node (13) at (5,0) {};
   \node (14) at (-1,-1) {};
   \node (15) at (0,-1) {};
   \node (16) at (1,-1) {};
   \node (17) at (2,-1) {};
   \node (18) at (3,-1) {};
   \node (19) at (4,-1) {};
   \node (20) at (5,-1) {};
   \node (21) at (-1,-2) {};
   \node (22) at (0,-2) {};
   \node (23) at (1,-2) {};
   \node (24) at (2,-2) {};
   \node (25) at (3,-2) {};
   \node (26) at (4,-2) {};
   \node (27) at (5,-2) {};
   \node (28) at (-1,-3) {};
   \node (29) at (0,-3) {};
   \node (30) at (1,-3) {};
   \node (31) at (2,-3) {};
   \node (32) at (3,-3) {};
   \node (33) at (4,-3) {};
   \node (34) at (5,-3) {};
   \node (35) at (-1,-4) {};
   \node (36) at (0,-4) {};
   \node (37) at (1,-4) {};
   \node (38) at (2,-4) {};
   \node (39) at (3,-4) {};
   \node (40) at (4,-4) {};
   \node (41) at (5,-4) {};
   \node (a) at (-0.5,0.5) {};
   \node (b) at (4.5,0.5) {};
   \node (c) at (-0.5,-3.5) {};
   \node (d) at (4.5,-3.5) {};
\end{pgfonlayer}
\begin{pgfonlayer}{edgelayer}
\clip (a) rectangle (d);
   \draw [gray, dashed] (7.center) to (13.center);
   \draw [gray, dashed] (14.center) to (20.center);
   \draw [gray, dashed] (21.center) to (27.center);
   \draw [gray, dashed] (28.center) to (34.center);
   \draw [gray, dashed] (1.center) to (36.center);
   \draw [gray, dashed] (2.center) to (37.center);
   \draw [gray, dashed] (3.center) to (38.center);
   \draw [gray, dashed] (4.center) to (39.center);
   \draw [gray, dashed] (5.center) to (40.center);
    \draw [thin] (a.center) to (b.center);
    \draw [thin] (c.center) to (d.center);
    \draw [thin] (a.center) to (c.center);
    \draw [thin] (b.center) to (d.center);
\end{pgfonlayer}
\filldraw [black]
    (9) circle (2pt)
    (10) circle (2pt)
    (11) circle (2pt)
    (12) circle (2pt)
    (16) circle (2pt)
    (17) circle (2pt)
    (18) circle (2pt)
    (19) circle (2pt)
    (22) circle (2pt)
    (23) circle (2pt)
    (24) circle (2pt)
    (25) circle (2pt)
    (26) circle (2pt)
    (30) circle (2pt)
    (31) circle (2pt)
    (32) circle (2pt)
    (33) circle (2pt)
    ;
\filldraw [blue]
    (8) circle (4pt)
    ;
\filldraw [red]
    (15) circle (4pt)
    (29) circle (4pt)
    ;
\end{tikzpicture}
\begin{tikzpicture}[scale=.7]
\begin{pgfonlayer}{nodelayer}
   \node (0) at (-1,1) {};
   \node (1) at (0,1) {};
   \node (2) at (1,1) {};
   \node (3) at (2,1) {};
   \node (4) at (3,1) {};
   \node (5) at (4,1) {};
   \node (6) at (5,1) {};
   \node (7) at (6,1) {};
   \node (8) at (-1,0) {};
   \node (9) at (0,0) {};
   \node (10) at (1,0) {};
   \node (11) at (2,0) {};
   \node (12) at (3,0) {};
   \node (13) at (4,0) {};
   \node (14) at (5,0) {};
   \node (15) at (6,0) {};
   \node (16) at (-1,-1) {};
   \node (17) at (0,-1) {};
   \node (18) at (1,-1) {};
   \node (19) at (2,-1) {};
   \node (20) at (3,-1) {};
   \node (21) at (4,-1) {};
   \node (22) at (5,-1) {};
   \node (23) at (6,-1) {};
   \node (24) at (-1,-2) {};
   \node (25) at (0,-2) {};
   \node (26) at (1,-2) {};
   \node (27) at (2,-2) {};
   \node (28) at (3,-2) {};
   \node (29) at (4,-2) {};
   \node (30) at (5,-2) {};
   \node (31) at (6,-2) {};
   \node (32) at (-1,-3) {};
   \node (33) at (0,-3) {};
   \node (34) at (1,-3) {};
   \node (35) at (2,-3) {};
   \node (36) at (3,-3) {};
   \node (37) at (4,-3) {};
   \node (38) at (5,-3) {};
   \node (39) at (6,-3) {};
   \node (40) at (-1,-4) {};
   \node (41) at (0,-4) {};
   \node (42) at (1,-4) {};
   \node (43) at (2,-4) {};
   \node (44) at (3,-4) {};
   \node (45) at (4,-4) {};
   \node (46) at (5,-4) {};
   \node (47) at (6,-4) {};
   \node (a) at (-0.5,0.5) {};
   \node (b) at (5.5,0.5) {};
   \node (c) at (-0.5,-3.5) {};
   \node (d) at (5.5,-3.5) {};
\end{pgfonlayer}
\begin{pgfonlayer}{edgelayer}
\clip (a) rectangle (d);
   \draw [gray, dashed] (8.center) to (15.center);
   \draw [gray, dashed] (16.center) to (23.center);
   \draw [gray, dashed] (24.center) to (31.center);
   \draw [gray, dashed] (32.center) to (39.center);
   \draw [gray, dashed] (1.center) to (41.center);
   \draw [gray, dashed] (2.center) to (42.center);
   \draw [gray, dashed] (3.center) to (43.center);
   \draw [gray, dashed] (4.center) to (44.center);
   \draw [gray, dashed] (5.center) to (45.center);
   \draw [gray, dashed] (6.center) to (46.center);
    \draw [thin] (a.center) to (b.center);
    \draw [thin] (c.center) to (d.center);
    \draw [thin] (a.center) to (c.center);
    \draw [thin] (b.center) to (d.center);
\end{pgfonlayer}
\filldraw [black]
    (10) circle (2pt)
    (11) circle (2pt)
    (12) circle (2pt)
    (13) circle (2pt)
    (14) circle (2pt)
    (17) circle (2pt)
    (18) circle (2pt)
    (19) circle (2pt)
    (21) circle (2pt)
    (22) circle (2pt)
    (25) circle (2pt)
    (26) circle (2pt)
    (27) circle (2pt)
    (28) circle (2pt)
    (29) circle (2pt)
    (30) circle (2pt)
    (33) circle (2pt)
    (34) circle (2pt)
    (35) circle (2pt)
    (37) circle (2pt)
    (38) circle (2pt)
    ;
\filldraw[blue]
    (9) circle (4pt)
    ;
\filldraw[red]
    (20) circle (4pt)
    (36) circle (4pt)
    ;
\end{tikzpicture}

\begin{tikzpicture}[scale=.7]
\begin{pgfonlayer}{nodelayer}
   \node (0) at (-1,1) {};
   \node (1) at (0,1) {};
   \node (2) at (1,1) {};
   \node (3) at (2,1) {};
   \node (4) at (3,1) {};
   \node (5) at (4,1) {};
   \node (6) at (5,1) {};
   \node (7) at (6,1) {};
   \node (8) at (7,1) {};
   \node (9) at (8,1) {};
   \node (10) at (-1,0) {};
   \node (11) at (0,0) {};
   \node (12) at (1,0) {};
   \node (13) at (2,0) {};
   \node (14) at (3,0) {};
   \node (15) at (4,0) {};
   \node (16) at (5,0) {};
   \node (17) at (6,0) {};
   \node (18) at (7,0) {};
   \node (19) at (8,0) {};
   \node (20) at (-1,-1) {};
   \node (21) at (0,-1) {};
   \node (22) at (1,-1) {};
   \node (23) at (2,-1) {};
   \node (24) at (3,-1) {};
   \node (25) at (4,-1) {};
   \node (26) at (5,-1) {};
   \node (27) at (6,-1) {};
   \node (28) at (7,-1) {};
   \node (29) at (8,-1) {};
   \node (30) at (-1,-2) {};
   \node (31) at (0,-2) {};
   \node (32) at (1,-2) {};
   \node (33) at (2,-2) {};
   \node (34) at (3,-2) {};
   \node (35) at (4,-2) {};
   \node (36) at (5,-2) {};
   \node (37) at (6,-2) {};
   \node (38) at (7,-2) {};
   \node (39) at (8,-2) {};
   \node (40) at (-1,-3) {};
   \node (41) at (0,-3) {};
   \node (42) at (1,-3) {};
   \node (43) at (2,-3) {};
   \node (44) at (3,-3) {};
   \node (45) at (4,-3) {};
   \node (46) at (5,-3) {};
   \node (47) at (6,-3) {};
   \node (48) at (7,-3) {};
   \node (49) at (8,-3) {};
   \node (50) at (-1,-4) {};
   \node (51) at (0,-4) {};
   \node (52) at (1,-4) {};
   \node (53) at (2,-4) {};
   \node (54) at (3,-4) {};
   \node (55) at (4,-4) {};
   \node (56) at (5,-4) {};
   \node (57) at (6,-4) {};
   \node (58) at (7,-4) {};
   \node (59) at (8,-4) {};
   \node (a) at (-0.5,0.5) {};
   \node (b) at (7.5,0.5) {};
   \node (c) at (-0.5,-3.5) {};
   \node (d) at (7.5,-3.5) {};
\end{pgfonlayer}
\begin{pgfonlayer}{edgelayer}
\clip (a) rectangle (d);
   \draw [gray, dashed] (10.center) to (19.center);
   \draw [gray, dashed] (20.center) to (29.center);
   \draw [gray, dashed] (30.center) to (39.center);
   \draw [gray, dashed] (40.center) to (49.center);
   \draw [gray, dashed] (1.center) to (51.center);
   \draw [gray, dashed] (2.center) to (52.center);
   \draw [gray, dashed] (3.center) to (53.center);
   \draw [gray, dashed] (4.center) to (54.center);
   \draw [gray, dashed] (5.center) to (55.center);
   \draw [gray, dashed] (6.center) to (56.center);
   \draw [gray, dashed] (7.center) to (57.center);
   \draw [gray, dashed] (8.center) to (58.center);
    \draw [thin] (a.center) to (b.center);
    \draw [thin] (c.center) to (d.center);
    \draw [thin] (a.center) to (c.center);
    \draw [thin] (b.center) to (d.center);
\end{pgfonlayer}
\filldraw [black]
    (12) circle (2pt)
    (13) circle (2pt)
    (14) circle (2pt)
    (15) circle (2pt)
    (16) circle (2pt)
    (17) circle (2pt)
    (18) circle (2pt)
    (21) circle (2pt)
    (22) circle (2pt)
    (24) circle (2pt)
    (26) circle (2pt)
    (28) circle (2pt)
    (31) circle (2pt)
    (32) circle (2pt)
    (33) circle (2pt)
    (34) circle (2pt)
    (35) circle (2pt)
    (36) circle (2pt)
    (37) circle (2pt)
    (38) circle (2pt)
    (41) circle (2pt)
    (42) circle (2pt)
    (44) circle (2pt)
    (46) circle (2pt)
    (48) circle (2pt)
    ;
\filldraw[blue]
    (11) circle (4pt)
    ;
\filldraw[red]
    (23) circle (4pt)
    (27) circle (4pt)
    (43) circle (4pt)
    (47) circle (4pt)
    ;
\filldraw[green]
    (25) circle (4pt)
    (45) circle (4pt)
    ;
\end{tikzpicture}
\caption{Depicted are all the cases of peak state transfer as specified in Theorem \ref{thm:4n_grids_peakST}. The $(4,5)$-grid (top-left) admits peak state transfer from the blue vertex to the red vertices at time $5$ (case (a)). The $(4,6)$-grid (top-right) admits peak state transfer from the blue vertex to the red vertices at time $3$ (case (b)). The $(4,8)$-grid admits peak state transfer from the blue vertex to the red vertices at time $2$ and to the green vertices at time $4$ (cases (b) and (c)). \label{fig:grid_vxfwalk_peakST}}
\end{figure}

In Figure \ref{fig:grid_vxfwalk_peakST}, we have depicted all instances of peak state transfer in the $(4,n)$-grids. Note that if $n$ is divisible by $4$, both cases (b) and (c) of Theorem \ref{thm:4n_grids_peakST} apply and there is peak state transfer to two distinct pairs of vertices at distinct times. The example of the $(4,6)$-grid in Figure \ref{fig:peak_ST_illustration} falls in case (b), so that there is peak state transfer at time $3$.

\section{Conclusion} \label{sec:openproblems}

In this paper, we have defined peak state transfer and given a spectral characterization of this phenomenon. Though the characterization is time-independent, it can be highly nontrivial to verify whether the conditions hold for some infinite family of graphs. This gives rise to many open problems; some are about discrete-time quantum walks and  some are purely combinatorial related with the spectral property needed for Theorem \ref{thm:PeakSTchar}. In the remainder of this section, we will discuss further directions on state transfer in quantum walks and on combinatorial open problems that arose from this study.

\subsection{Future directions}

The main tool of this paper is to make use of the projected transition matrix and a future direction is to develop  tools, in a more general setting, including a broader definition of strong cospectrality for Hermitian matrices.

\subsection*{More general star states }
A point of consideration, also mentioned in \cite{GuoSch2024}, 
is that, in the case of the arc-reversal walk or vertex-face walk, it may be desirable to use a more general definition. In  a more general setting, perfect state transfer could be said to  occur from vertex $u$ to vertex $v$ at time $\tau$ if the vector $U^\tau N\Ze_u$ is supported only on the arcs that are incident to the vertex $v$, but not necessarily equal to the uniform superposition $N\Ze_v$. As shown in \cite{GodZha2023} and \cite{GuoSch2024}, these definitions are equivalent if $u$ and $v$ have the same degree. If $u$ and $v$ have different degrees, however, this alternative form of perfect state transfer can occur, as illustrated in Figure \ref{fig:altPST_arc_reversal_path}. While the projected transition matrix cannot be easily leveraged in this setting, an interesting direction of research would be to develop tools for the more general setting.

\begin{figure}[htb]
    \centering
    \begin{tikzpicture}[scale=.8]
    	\begin{pgfonlayer}{nodelayer}
    		\node[label=left:$u$] (0) at (0, 2) {};
    		\node[label=left:$v$] (1) at (0, 0) {};
    		\node (2) at (0, -2) {};
            \node (3) at (0,2.5) {\color{vcolour}{$N\Ze_u$}};
    	\end{pgfonlayer}
    	\begin{pgfonlayer}{edgelayer}
    		\draw [thick, style=directed, bend right=15, color=vcolour] (0.center) to node [pos = 0.6, left] {$1$} (1.center);
    		\draw [style=directed, bend right=15] (1.center) to (0.center);
    		\draw [style=directed, bend right=15] (1.center) to node [pos = 0.6, right=0pt] {$\phantom{\frac{1}{\sqrt{2}}}$} (2.center);
    		\draw [style=directed, bend right=15] (2.center) to node [pos = 0.6, right=0pt] {$\phantom{\frac{1}{\sqrt{2}}}$} (1.center);
    	\end{pgfonlayer}
        \filldraw [black]
            (0) circle (2.5pt)
            (1) circle (2.5pt)
            (2) circle (2.5pt);
    \end{tikzpicture} \hspace{20pt}
    \begin{tikzpicture}[scale=.8]
    	\begin{pgfonlayer}{nodelayer}
    		\node[label=left:$u$] (0) at (0, 2) {};
    		\node[label=left:$v$] (1) at (0, 0) {};
    		\node (2) at (0, -2) {};
            \node (3) at (0,2.5) {\color{vcolour}{$UN\Ze_u$}};
    	\end{pgfonlayer}
    	\begin{pgfonlayer}{edgelayer}
    		\draw [style=directed, bend right=15] (0.center) to (1.center);
    		\draw [thick, style=directed, bend right=15, color=vcolour] (1.center) to node [pos = 0.6, right] {$1$} (0.center);
    		\draw [style=directed, bend right=15] (1.center) to (2.center);
    		\draw [style=directed, bend right=15] (2.center) to (1.center);
    	\end{pgfonlayer}
        \filldraw [black]
            (0) circle (2.5pt)
            (1) circle (2.5pt)
            (2) circle (2.5pt);
    \end{tikzpicture} \hspace{20pt}
    \begin{tikzpicture}[scale=.8][scale=.8]
    	\begin{pgfonlayer}{nodelayer}
    		\node[label=left:$u$] (0) at (0, 2) {};
    		\node[label=left:$v$] (1) at (0, 0) {};
    		\node (2) at (0, -2) {};
            \node (3) at (0,2.5) {\color{kcolour}{$N\Ze_v$}};
    	\end{pgfonlayer}
    	\begin{pgfonlayer}{edgelayer}
    		\draw [style=directed, bend right=15] (0.center) to (1.center);
    		\draw [thick, style=directed, bend right=15, color=kcolour] (1.center) to node [pos = 0.6, right] {$\frac{1}{\sqrt{2}}$} (0.center);
    		\draw [thick, style=directed, bend right=15, color=kcolour] (1.center) to node [pos = 0.6, left] {$\frac{1}{\sqrt{2}}$}(2.center);
    		\draw [style=directed, bend right=15] (2.center) to (1.center);
    	\end{pgfonlayer}
        \filldraw [black]
            (0) circle (2.5pt)
            (1) circle (2.5pt)
            (2) circle (2.5pt);
    \end{tikzpicture}
    \caption{On the left and in the middle the arc-reversal walk on the path $P_3$ at times $0$ and $1$, starting in the leaf $u$, and on the right the state $N\Ze_v$. Even though the state at time $1$ is supported only on the arcs incident to the vertex $v$, there is no perfect state transfer at time $1$ because $UN\Ze_v$ is not parallel to $N\Ze_v$. \label{fig:altPST_arc_reversal_path}}
\end{figure}
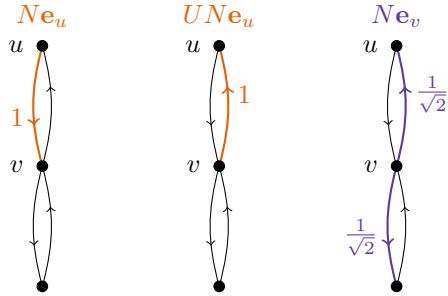

\subsection*{Complex peak state transfer and a more general notion of strong cospectrality}

In this paper, we have only discussed peak state transfer for real-valued two-reflection walks. There is a more general setting in which $U$ and $B$ have complex-valued entries. It is possible to formulate complex analogues to the results in Section \ref{sec:peakst}. If $U$ has complex-valued entries, then its projected transition matrix $B$ is Hermitian, as are all other $B_t = N^* U^t N$. As in Lemma \ref{lem:technical_lemma}, if $B$ has spectral decomposition $B = \sum_\theta \theta E_{\theta}$, then
\[
|B_t(u,v)| \leq \sum_\theta |E_\theta(u,v)| \leq 1.
\]
In the Hermitian setting, the conditions for these inequalities to be equalities are similar, but slightly different. The first inequality is an equality if and only if there exists a $\gamma \in \cx$ with $|\gamma| = 1$ such that for all $\theta$ in the mutual eigenvalue support of $u$ and $v$:
\[
T_t(\theta) E_\theta(u,v) = \gamma |E_\theta(u,v)|.
\]
In this case, $E_\theta(u,v)$ is a real multiple of $\gamma$ for every such $\theta$. Therefore, even though the entries of the spectral idempotents are not necessarily real-valued, we can still split the mutual eigenvalue support into a `positive' and a `negative' part, once we have verified that such a $\gamma$ exists:
\[
\Lambda^{\pm \gamma}(u,v) \coloneq \{\theta \in \ev(B) \mid E_\theta(u,v)/|E_\theta(u,v)| = \pm \gamma\}.
\]

Further, the second inequality is becomes an equality if and only if for all $\theta$ there exists an $\alpha_\theta \in \cx$ with $|\alpha_\theta| = 1$ such that
\[
E_\theta\Ze_v = \alpha_\theta E_\theta\Ze_u.
\]
This gives motivation to extend the notion of `strong cospectrality' to all Hermitian matrices, using this characterization as a definition. If the first and second equality both hold then $\alpha_\theta = \gamma$ for all $\theta \in \Lambda^\gamma(u,v)$ and $\alpha_\theta = -\gamma$ for all $\theta \in \Lambda^{-\gamma}(u,v)$.


Note that since periodicity at $u$ at time $t$ occurs if and only if $B_t(u,u) = \gamma$ for some $\gamma$ of unit length, we must have $\gamma = \pm 1$, as $B_t$ is Hermitian.

\subsection{Combinatorial open problems}

Our problem of studying state transfer arises in quantum computing, but it has led to several purely  combinatorial problems, which provides a rich source of new challenges for combinatorialists. We will present some of them below. 

\subsection*{Maximizing entries in \texorpdfstring{$ M_G = \sum_{\theta} |E_{\theta}| $}{M}}

In an upcoming work \cite{CouGuoSch2025}, the authors, along with G.~Coutinho, define peak state transfer for continuous-time quantum walks and give a characterization analogous to Theorem \ref{thm:PeakSTchar}. In both the continuous-time setting and the discrete-time arc-reversal walk on regular graphs, we are interested in the entries of matrix 
 \[ M_G = \sum_{\theta} |E_{\theta}| \]
 where $A$ is either the adjacency or the normalized adjacency matrix of $G$, with spectral decomposition $A =  \sum_{\theta} \theta E_{\theta}$. The $u,v$ entry of this matrix governs how much state transfer can occur, as seen in the definition of peak state transfer. If there is perfect or pretty good state transfer between $u$ and $v$, then $M_G(u,v) = 1$. It is interesting to maximize this quantity, even when it is not equal to $1$, amongst graphs on $n$ vertices. More concretely, we can ask
 
 \begin{openprob}
 What is the value of:
 \[
 \max \{ M_G(u,v) \mid M_G(u,v) < 1, G \in \cG_n , u,v \in V(G)\}
 \]
 where $\cG_n$ can be taken to be all graphs on $n$ vertices? We can take the maximum with respect to the adjacency matrix, when bounding peak state transfer for continuous-time walks, or the normalized adjacency matrix, when bounding peak state transfer for the arc reversal walk. 
 \end{openprob}
 
 For example, on $3$ vertices, the entries of $M$ which are not equal to $1$ is maximized by that of $P_3$, for $u,v$ adjacent, which is $\frac{1}{\sqrt{2}} \approx 0.707107$. 

\subsection*{Graphs with special eigenvalues }

Let $G$ be a graph and let $A$ be the normalized adjacency matrix of $G$. The eigenvalue condition in (i) of Theorem \ref{thm:PeakSTchar} will hold if all eigenvalues of $A$ are contained in $\{\pm 1, \pm \frac{1}{2},0\}$. This leads us to the following problem:

\begin{openprob}
Determine all graphs which have normalized adjacency matrices with all eigenvalues contained in the set $\{\pm 1, \pm \frac{1}{2},0\}$. 
\end{openprob}

In particular, if $G$ is $k$-regular, we can rephrase this as a question about the adjacency matrix.

\begin{openprob}
Determine all $k$-regular graphs which have adjacency matrices with all eigenvalues contained in the set $\{\pm k, \pm \frac{k}{2},0\}$. Determine all bipartite, biregular graphs with degrees $r,k$ such that the adjacency matrix has all eigenvalues contained in $\{\pm \sqrt{rk}, \pm \frac{\sqrt{rk}}{2},0\}$. 
\end{openprob}

In the course of determining peak state transfer in the arc-reversal walk, we have already determined this for  strongly regular graphs (Section \ref{subsec:SRGs}) and  bipartite, biregular graphs constructed as point-block incidence graphs of a block design (Section \ref{subsec:blockdesigns}). We also gave several examples of distance-regular graphs in Section \ref{subsec:other_examples}, but the classification of intersection arrays of distance-regular graph admitting peak state transfer is left as an open problem. 

We note that peak state transfer could still occur for some pair of vertices, even if not all eigenvalues of the normalized adjacency matrix are contained in $\{\pm 1, \pm \frac{1}{2},0\}$; this is the topic of the next subsection. 

\subsection*{Mutual eigenvalue support}

In Section \ref{sec:peakst}, we defined the mutual eigenvalue support $\Lambda(u,v)$ of two vertices $u,v$. Given a projected transition matrix $B$, with spectral decomposition $B= \sum_{\theta} \theta E_{\theta}$, and $u,v \in X$, where $X$ indexes the rows and columns of $B$, we note that the eigenvalue condition in (i) of Theorem \ref{thm:PeakSTchar} applies only for $\theta \in \Lambda(u,v)$. This leads to the following question:

\begin{openprob}
   What are combinatorial (or algebraic) conditions on graphs $G$ with pairs of vertices $u,v$, such that some spectral idempotent of its normalized adjacency matrix has $E_{\theta}(u,v) =0$?
\end{openprob}

We note that this occurs if $u,v$ are in different connected components. 
Since $E_{\theta}$ is positive semidefinite, if the $(u,u)$ diagonal entry is equal to $0$, then the $u$-th row and $u$-th column are both $0$. In an imprimitive distance-regular graph, some spectral idempotent(s) are block-diagonal; having a block-diagonal spectral idempotent is equivalent to being imprimitive, see \cite{MarMuzWil2007}. For graphs in association schemes, this is equivalent to asking when the $Q$-matrix has a $0$ entry and determining when this occurs will aid in classifying peak state transfer in distance-regular graphs.

\bibliographystyle{plain}

\begin{thebibliography}{10}

\bibitem{BanCouGod2017}
Leonardo Banchi, Gabriel Coutinho, Chris Godsil, and Simone Severini.
\newblock Pretty good state transfer in qubit chains---the {H}eisenberg {H}amiltonian.
\newblock {\em J. Math. Phys.}, 58(3):032202, 9, 2017.

\bibitem{BroCohNeu1989}
A.~E. Brouwer, A.~M. Cohen, and A.~Neumaier.
\newblock {\em Distance-regular graphs}, volume~18 of {\em Ergebnisse der Mathematik und ihrer Grenzgebiete (3) [Results in Mathematics and Related Areas (3)]}.
\newblock Springer-Verlag, Berlin, 1989.

\bibitem{BroHae2012}
Andries~E. Brouwer and Willem~H. Haemers.
\newblock {\em Spectra of graphs}.
\newblock Universitext. Springer, New York, 2012.

\bibitem{ChaSin2023}
Ada Chan and Peter Sin.
\newblock Pretty good state transfer among large sets of vertices, 2023.

\bibitem{ChaZha2023}
Ada Chan and Hanmeng Zhan.
\newblock Pretty good state transfer in discrete-time quantum walks.
\newblock {\em J. Phys. A}, 56(16):Paper No. 165305, 25, 2023.

\bibitem{CheGod2011}
Wang-Chi Cheung and Chris Godsil.
\newblock Perfect state transfer in cubelike graphs.
\newblock {\em Linear Algebra and its Applications}, 435(10):2468--2474, 2011.
\newblock Special Issue in Honor of Dragos Cvetkovic.

\bibitem{Chu1997}
Fan R.~K. Chung.
\newblock {\em Spectral graph theory}, volume~92 of {\em CBMS Regional Conference Series in Mathematics}.
\newblock Conference Board of the Mathematical Sciences, Washington, DC; by the American Mathematical Society, Providence, RI, 1997.

\bibitem{ColDin2007}
Charles~J. Colbourn and Jeffrey~H. Dinitz, editors.
\newblock {\em Handbook of combinatorial designs}.
\newblock Discrete Mathematics and its Applications (Boca Raton). Chapman \& Hall/CRC, Boca Raton, FL, second edition, 2007.

\bibitem{ConJon1976}
J.~H. Conway and A.~J. Jones.
\newblock Trigonometric {D}iophantine equations ({O}n vanishing sums of roots of unity).
\newblock {\em Acta Arith.}, 30(3):229--240, 1976.

\bibitem{CooDhoGoe2023}
Kris Coolsaet, Sven D’hondt, and Jan Goedgebeur.
\newblock House of graphs 2.0: A database of interesting graphs and more.
\newblock {\em Discrete Applied Mathematics}, 325:97--107, 2023.

\bibitem{CouGodGuo2015}
G.~Coutinho, C.~Godsil, K.~Guo, and F.~Vanhove.
\newblock Perfect state transfer on distance-regular graphs and association schemes.
\newblock {\em Linear Algebra and its Applications}, 478:108--130, 2015.

\bibitem{CouGod2023book}
Gabriel Coutinho and Chris Godsil.
\newblock Graph spectra and continuous quantum walks, 2023.
\newblock Manuscript available at \url{https://www.math.uwaterloo.ca/~cgodsil/quagmire/QuantumWalks/pdfs/GrfSpc3.pdf} (accessed 8 Nov 2024).

\bibitem{CouGuoSch2025}
Gabriel Coutinho, Krystal Guo, and Vincent Schmeits.
\newblock Peak state transfer in continuous-time quantum walks.
\newblock \textit{(in preparation)}.

\bibitem{CouGuoBom2017}
Gabriel Coutinho, Krystal Guo, and Christopher~M. van Bommel.
\newblock Pretty good state transfer between internal nodes of paths.
\newblock {\em Quantum Inf. Comput.}, 17(9-10):825--830, 2017.

\bibitem{vDamKooTan2016}
Edwin~R. Van~Dam, Jack~H. Koolen, and Hajime Tanaka.
\newblock Distance-regular graphs.
\newblock {\em The Electronic Journal of Combinatorics}, {\#DS22}, 2016.

\bibitem{Dav1979}
Philip~J. Davis.
\newblock {\em Circulant matrices}.
\newblock A Wiley-Interscience Publication. John Wiley \& Sons, New York-Chichester-Brisbane, 1979.
\newblock Pure and Applied Mathematics.

\bibitem{DumFoo2004}
David~S. Dummit and Richard~M. Foote.
\newblock {\em Abstract algebra}.
\newblock John Wiley \& Sons, Inc., Hoboken, NJ, third edition, 2004.

\bibitem{GodGuoKem2020}
C.~D. Godsil, K.~Guo, M.~Kempton, G.~Lippner, and F.~M{\"{u}}nch.
\newblock State transfer in strongly regular graphs with an edge perturbation.
\newblock {\em J. Comb. Theory, Ser. {A}}, 172:105181, 2020.

\bibitem{God2011}
Chris Godsil.
\newblock Periodic graphs.
\newblock {\em Electron. J. Combin.}, 18(1):Paper 23, 15, 2011.

\bibitem{GodKirSev2012}
Chris Godsil, Stephen Kirkland, Simone Severini, and Jamie Smith.
\newblock Number-theoretic nature of communication in quantum spin systems.
\newblock {\em Phys. Rev. Lett.}, 109:050502, Aug 2012.

\bibitem{GodRoy2001}
Chris Godsil and Gordon Royle.
\newblock {\em Algebraic graph theory}, volume 207 of {\em Graduate Texts in Mathematics}.
\newblock Springer-Verlag, New York, 2001.

\bibitem{GodSmi2024}
Chris Godsil and Jamie Smith.
\newblock Strongly cospectral vertices.
\newblock {\em Australas. J. Combin.}, 88:1--21, 2024.

\bibitem{GodZha2019}
Chris Godsil and Hanmeng Zhan.
\newblock Discrete-time quantum walks and graph structures.
\newblock {\em Journal of combinatorial theory. Series A}, 167:181--212, 2019.

\bibitem{GodZha2023}
Chris Godsil and Hanmeng Zhan.
\newblock {\em Discrete quantum walks on graphs and digraphs}, volume 484 of {\em London Mathematical Society Lecture Note Series}.
\newblock Cambridge University Press, Cambridge, 2023.

\bibitem{GuoSch2024}
Krystal Guo and Vincent Schmeits.
\newblock Perfect state transfer in quantum walks on orientable maps.
\newblock {\em Algebraic Combinatorics}, 7(3):713--747, 2024.

\bibitem{SteSko2016}
M.~\ifmmode \check{S}\else \v{S}\fi{}tefa\ifmmode~\check{n}\else \v{n}\fi{}\'ak and S.~Skoup\'y.
\newblock Perfect state transfer by means of discrete-time quantum walk search algorithms on highly symmetric graphs.
\newblock {\em Phys. Rev. A}, 94:022301, Aug 2016.

\bibitem{JefZur2023}
Stacey Jeffery and Sebastian Zur.
\newblock Multidimensional quantum walks, with application to $k$-distinctness.
\newblock \arxiv{2208.13492}, 2023.

\bibitem{Ved2021}
Vedran Kr\v{c}adinac.
\newblock A new partial geometry {$pg(5,5,2)$}.
\newblock {\em J. Combin. Theory Ser. A}, 183:Paper No. 105493, 4, 2021.

\bibitem{KubSeg2022}
Sho Kubota and Etsuo Segawa.
\newblock Perfect state transfer in {G}rover walks between states associated to vertices of a graph.
\newblock {\em Linear Algebra Appl.}, 646:238--251, 2022.

\bibitem{KubYos2024}
Sho Kubota and Kiyoto Yoshino.
\newblock Symmetry of graphs and perfect state transfer in grover walks, 2024.

\bibitem{KurWoj2011}
Pawe\l{} Kurzy\ifmmode~\acute{n}\else \'{n}\fi{}ski and Antoni W\'ojcik.
\newblock Discrete-time quantum walk approach to state transfer.
\newblock {\em Phys. Rev. A}, 83:062315, Jun 2011.

\bibitem{LiuZho2022}
Xiaogang Liu and Sanming Zhou.
\newblock Eigenvalues of Cayley graphs.
\newblock {\em The Electronic Journal of Combinatorics}, 29(2):{\#P}2.9, 2022.

\bibitem{MarMuzWil2007}
William~J. Martin, Mikhail Muzychuk, and Jason Williford.
\newblock Imprimitive cometric association schemes: Constructions and analysis.
\newblock {\em Journal of Algebraic Combinatorics}, 25(4):399--415, 2007.

\bibitem{MarPisTom2005}
Dragan Maru\v{s}i\v{c}, Toma\v{z}{} Pisanski, and Steve Wilson.
\newblock The genus of the {GRAY} graph is 7.
\newblock {\em European J. Combin.}, 26(3-4):377--385, 2005.

\bibitem{Riv1974}
Theodore~Joseph Rivlin.
\newblock {\em The Chebyshev polynomials}.
\newblock Pure and applied mathematics ; 40. Wiley-Interscience, New York ;, 1974.

\bibitem{Spi2019}
Daniel~A. Spielman.
\newblock Spectral and algebraic graph theory.
\newblock http://cs-www.cs.yale.edu/homes/spielman/sagt/sagt.pdf, 2019.

\bibitem{Sze2004}
M~Szegedy.
\newblock Quantum speed-up of {M}arkov chain based algorithms.
\newblock In {\em 45th Annual IEEE Symposium on Foundations of Computer Science}, pages 32--41, Los Alamitos CA, 2004. IEEE.

\bibitem{sagemath}
{The Sage Developers}.
\newblock {\em {S}ageMath, the {S}age {M}athematics {S}oftware {S}ystem ({V}ersion 9.0)}, 2020.
\newblock {\tt https://www.sagemath.org}.

\bibitem{Bom2019}
Christopher~M. van Bommel.
\newblock A complete characterization of pretty good state transfer on paths.
\newblock {\em Quantum Inf. Comput.}, 19(7-8):601--608, 2019.

\bibitem{LinSch1981}
J.~H. van Lint and A.~Schrijver.
\newblock Construction of strongly regular graphs, two-weight codes and partial geometries by finite fields.
\newblock {\em Combinatorica}, 1(1):63--73, 1981.

\bibitem{VinZhe2012}
Luc Vinet and Alexei Zhedanov.
\newblock Almost perfect state transfer in quantum spin chains.
\newblock {\em Phys. Rev. A}, 86:052319, Nov 2012.

\bibitem{Zha2019}
Hanmeng Zhan.
\newblock An infinite family of circulant graphs with perfect state transfer in discrete quantum walks.
\newblock {\em Quantum Inf. Process.}, 18(12):Paper No. 369, 26, 2019.

\bibitem{Zha2021}
Hanmeng Zhan.
\newblock Quantum walks on embeddings.
\newblock {\em J. Algebraic Combin.}, 53(4):1187--1213, 2021.

\bibitem{YalGed2015}
İskender Yalçınkaya and Zafer Gedik.
\newblock Qubit state transfer via discrete-time quantum walks.
\newblock {\em Journal of Physics A: Mathematical and Theoretical}, 48(22):225302, {M}ay 2015.

\end{thebibliography}

\addcontentsline{toc}{section}{References}

\appendix 
\section{Omitted proofs}
\label{app:proofs}
In this appendix, we will provide omitted proofs of some results in the text, in order of appearance.

The proof of Lemma \ref{lem:lex_product_periodicity} is provided below. The lemma states that if the arc-reversal walk on some connected graph $G$ is $\tau$-periodic at some vertex $u$, then the blow-up graph $G[\overline{K_m}]$ is $\tau'$-periodic at every vertex $(u,a)$ for $a =1,\ldots,m$, where $\tau' = \lcm\{\tau,4\}$. Moreover, is $(u,b)$ is a vertex that is distinct from $(u,a)$, then there is peak state transfer from $(u,a)$ to $(u,b)$ at time $\tau'/2$ if and only if one of the following is true:
\begin{enumerate}[(a)]
\item $\tau$ is not divisible by $4$;
\item $\tau \equiv 4 \mod 8$ and the integer $\tau \cdot \frac{r(\theta)}{s(\theta)}$ is even for all eigenvalues $\theta \in \Lambda(u,u) \setminus \{0\}$ of the projected transition matrix of $B$, where $r(\theta)$ and $s(\theta)$ are as in Remark \ref{rem:periodicity_char_simplified}.
\end{enumerate}
Conversely, if there is periodicity at $(u,a)$ or peak state transfer from $(u,a)$ to $(u,b)$ in $G[\overline{K_m}]$, there must be periodicity at $u$ in $G$.

\begin{proof}[Proof of Lemma \ref{lem:lex_product_periodicity}]
We will need to consider the eigenvalue supports of several vertices and we will use the following notation for the mutual eigenvalue supports of the projected transition matrices $B$ and $B'$ of $G$ and $G[\overline{K_m}]$ respectively:
\[
\begin{split}
\Lambda &\coloneq \Lambda(u,u) \subset \ev(B) \\
\Omega &\coloneq \Lambda((u,a),(u,a)) \subset \ev(B')\\
\Theta^{\pm 1} &\coloneq \Lambda^{\pm 1}((u,a),(u,b)) \subset \ev(B')
\end{split}
\]
In order to apply Theorems \ref{thm:PeakSTchar} and \ref{thm:periodicity_characterisation} to $B'$, we need to determine $\Omega$, $\Theta^{1}$ and $\Theta^{-1}$. By equation \eqref{eq:nonzero_idempotents_mG} we have that
\[
F_\theta((u,a),(u,b)) = \frac{1}{m}E_\theta (u,u) \begin{cases}
> 0 &\text{if $\theta \in \Lambda$;} \\
= 0 &\text{otherwise,}
\end{cases}
\]
and by equation \eqref{eq:zero_idempotent_mG}
\[
F_0((u,a),(u,b)) = \begin{cases}
- \frac{1}{m}(1 - E_0(u,u)) \quad &\text{if $0 \in \sigma(B)$}; \\
- \frac{1}{m} &\text{otherwise.}
\end{cases}
\]
In both cases, $F_0((u,a),(u,b)) < 0$, since $E_0(u,u) = 1$ would imply that $u$ is an isolated vertex of $G$. Hence
\[
\Theta^1 = \Lambda \setminus \{0\} \quad \text{and} \quad \Theta^{-1} = \{0\}.
\]
Similarly, by \eqref{eq:nonzero_idempotents_mG} and \eqref{eq:zero_idempotent_mG},
\[
F_\theta((u,a),(u,a)) = \frac{1}{m}E_\theta(u,u) > 0
\]
for all $\theta \in \Lambda$ and $F_0((u,a),(u,a)) > 0$, so that
\[
\Omega = \Lambda \cup \{0\}
\]
By Remark \ref{rem:periodicity_char_simplified}, every $\theta \in \Lambda$ can be uniquely written as
\[
\theta = \cos \frac{2r(\theta)\pi}{s(\theta)}
\]
where $r(\theta) \geq 0$ and $s(\theta) > 0$ are integers such that $r(\theta) \leq \frac{1}{2}s(\theta)$ and $\gcd(r(\theta),s(\theta)) = 1$; moreover,
\[
\tau = \lcm\{s(\theta) : \theta \in \Lambda\}.
\]
To see that statement (i) holds, note that since $0 = \cos \frac{2\cdot 1 \cdot \pi}{4} \in \Omega$, we have by Remark \ref{rem:periodicity_char_simplified} that there is periodicity in $G[\overline{K_m}]$ with period
\[
\tau' = \lcm\{\tau,4\}.
\]
To prove statement (ii), in order to check for peak state transfer, we rewrite every $\theta \in \Lambda$ in the form
\[
\theta = \cos \frac{p(\theta)\pi}{q(\theta)}
\]
as in Theorem \ref{thm:PeakSTchar}(i), where in particular $\gcd(p(\theta),q(\theta)) = 1$. Then
\[
(p(\theta),q(\theta)) = \begin{cases}
(r(\theta),s(\theta)/2) &\text{if $s(\theta)$ is even}; \\
(2r(\theta),s(\theta)) &\text{if $s(\theta)$ is odd.}
\end{cases}
\]
Regardless of $0$ being an eigenvalue of $B$ in $\Lambda$, it is an eigenvalue of $B'$ in $\Theta^{-1}$ and can be written as $\cos\frac{1 \cdot \pi}{2}$. Hence the first time of peak state transfer from $(u,a)$ to $(u,b)$, if it occurs, must be
\[
t := \lcm(\{q(\theta) : \theta \in \Lambda\} \cup \{2\}) = \begin{cases}
2\tau &\text{if $\tau \equiv 1 \mod 2$}; \\
\tau &\text{if $\tau \equiv 2 \mod 4$}; \\
\tau/2 &\text{if $\tau \equiv 0 \mod 4$},
\end{cases}
\]
or alternatively, $t = \tau'/2$. Moreover, for peak state transfer to occur, it is necessary that $t\cdot \frac{1}{2}$ is odd in order for the eigenvalue $0 \in \Theta^{-1}$ to satisfy condition (ii) of Theorem \ref{thm:PeakSTchar}. For the `if' part of (ii), we deal with (a) and (b) separately.

Case (a). If $\tau$ is not divisible by $4$, then every $s(\theta)$ is not divisible by $4$, so that every $q(\theta)$ is odd. This implies that $0 \notin \Lambda$. Moreover, as $t$ is even, $t \cdot \frac{p(\theta)}{q(\theta)}$ is even for every $\theta \in \Lambda = \Theta^1$. 
For the remaining eigenvalue $0 \in \Theta^{-1}$, note that $t \equiv 2 \mod 4$ so that $t \cdot \frac{1}{2}$ is odd, as required. We conclude that there is peak state transfer at time $\tau'/2$ by Theorem \ref{thm:PeakSTchar}.

Case (b). Here, $\tau' = \tau$ and $t \equiv 2 \mod 4$. Moreover,
\begin{equation}
\label{eq:peak_st_condition_odd}
t \cdot \frac{p(\theta)}{q(\theta)} = t \cdot \frac{2r(\theta)}{s(\theta)} = \tau \cdot \frac{r(\theta)}{s(\theta)}
\end{equation}
is even for all $\theta \in \Lambda \setminus \{0\} = \Theta^1$. Since $t\cdot \frac{1}{2}$ is again odd, there is peak state transfer at time $\tau'/2$ by Theorem \ref{thm:PeakSTchar}.

For the `only if' part of (ii), assume that (a) and (b) are both false. Then either
\begin{itemize}
\item $\tau \equiv 0 \mod 8$, or
\item $\tau \equiv 4 \mod 8$ and there is some $\theta \in \Lambda$ such that $\tau \cdot \frac{r(\theta)}{s(\theta)}$ is odd.
\end{itemize}
In the former case, $t$ is divisible by $4$ so that $t \cdot \frac{1}{2}$ is even, and there cannot be peak state transfer. In the latter case, we have that there is some other eigenvalue $\theta \in \Lambda \setminus \{0\} = \Theta^1$ for which $t \cdot \frac{p(\theta)}{q(\theta)}$ is odd by \eqref{eq:peak_st_condition_odd}, which then fails to satisfy condition (ii) of Theorem \ref{thm:PeakSTchar}.

Finally, note that
\[
\Lambda \subset \Omega = \Theta^1 \cup \Theta^{-1}
\]
so that if is periodicity at $(u,a)$ or peak state transfer from $(u,a)$ to $(u,b)$ in $G[\overline{K_m}]$, every eigenvalue $\theta \in \Lambda$ can be written as a cosine of a rational multiple of $\pi$, and there must be periodicity by Theorem \ref{thm:periodicity_characterisation}.
\end{proof}

We will give the proof of Lemma \ref{lem:2design_idempotents} below. Recall that, in the lemma statement,  the eigenvalues of the adjacency matrix $A$ of the point-block incidence graph are $0, \pm \sqrt{r-\lambda}$ and $\pm\sqrt{rk}$ and their respective spectral idempotents are
\begin{align*}
E_{0} &= 
\begin{bmatrix}
0 & 0 \\
0 & I - \frac{1}{r-\lambda}\left(N^TN - \frac{\lambda v}{b}J\right)
\end{bmatrix}; \\
E_{\pm \sqrt{r-\lambda}} &= 
\frac{1}{2}
\begin{bmatrix}
I - \frac{1}{v}J & \pm\frac{1}{\sqrt{r-\lambda}} \left(N - \frac{k}{v}J\right)\\
\pm \frac{1}{\sqrt{r-\lambda}} \left(N - \frac{k}{v}J\right) & \frac{1}{r-\lambda}\left(N^TN - \frac{rk}{b}J\right)
\end{bmatrix}
; \quad \text{and}\\
E_{\pm \sqrt{rk}} &= \frac{1}{2}
\begin{bmatrix}
\frac{1}{v}J & \pm \frac{k}{v\sqrt{rk}} J \\
\pm \frac{k}{v\sqrt{rk}} J & \frac{1}{b}J
\end{bmatrix}.
\end{align*}

\begin{proof}[Proof of Lemma \ref{lem:2design_idempotents}]
This can be verified with a standard (but rather tedious) computation: one can show that the $E_\mu$ are pairwise projections that sum to the identity matrix, as well as that $AE_\mu = \mu E_\mu$ for every $\mu$. For this, one needs the identities in \eqref{eq:2design_parameter_relations}, as well as the expressions in \eqref{eq:NNT_expression_JI} and \eqref{eq:N_times_J} to simplify products of the matrices $N$, $N^T$ and $J$.

For an alternative proof, since $X$ is bipartite, one can determine first the spectral decomposition of the squared adjacency matrix
\[
A^2 = \begin{bmatrix}
NN^T & 0 \\
0 & N^TN
\end{bmatrix},
\]
using the spectral decomposition of $NN^T$ given in \eqref{eq:NNT_spectral_decomposition} (note that if $F$ is the spectral idempotent of $NN^T$ for the eigenvalue $\mu \neq 0$, then $\frac{1}{\mu}N^TFN$ is the spectral idempotent for $\mu$ as an eigenvalue of $N^TN$). These spectral idempotents are the following:

\begin{align*}
F_{0} &= 
\begin{bmatrix}
0 & 0 \\
0 & I - \frac{1}{r-\lambda}\left(N^TN - \frac{\lambda v}{b}J\right)
\end{bmatrix}; \\
F_{r-\lambda} &= 
\begin{bmatrix}
I - \frac{1}{v}J & 0 \\
0 & \frac{1}{r-\lambda}\left(N^TN - \frac{rk}{b}J\right)
\end{bmatrix}
; \quad \text{and}\\
F_{rk} &= \begin{bmatrix}
\frac{1}{v}J & 0 \\
0 & \frac{1}{b}J
\end{bmatrix},
\end{align*}
for the eigenvalues $0$, $r - \lambda$ and $rk$ respectively, assuming that $v < b$. Because $X$ is bipartite, the eigenvalues of $A$ are symmetric (with multiplicity) around $0$, so its eigenvalues must be $0$, $\pm \sqrt{r-\lambda}$ and $\sqrt{rk}$. Moreover,
\[
\begin{split}
F_0 &= E_0; \\
F_{r-\lambda} &= E_{\sqrt{r-\lambda}} + E_{-\sqrt{r-\lambda}}; \text{ and} \\
F_{rk} &= E_{\sqrt{rk}} + E_{-\sqrt{rk}},
\end{split}
\]
which allows one to compute the idempotents $E_{\mu}$ as given in the statement of the lemma (note that $\begin{bmatrix}x & y\end{bmatrix}^T$ is an eigenvector for $\mu$ if and only if $\begin{bmatrix}x & -y\end{bmatrix}^T$ is an eigenvector for $-\mu$). The same argument works when $v = b$, except that in that case, the matrix $N^TN$ is non-singular and $0$ is not an eigenvalue.
\end{proof}

Next, we provide the proof of Proposition \ref{prop:4_4grid_periodic}, which states that the vertex-face walk on the $(4,4)$-grid is $12$-periodic and that it does not admit perfect state transfer at any time.

\begin{proof}[Proof of Proposition \ref{prop:4_4grid_periodic}]
Considering the equations in \eqref{eq:vxf_grid_cos_equation}, the walk on the toroidal $(4,4)$-grid is periodic if and only if for every pair of $k,\ell \in \{0,1,2,3\}$ there exists an $s_{k,\ell} \in \rats$ such that
\[
\cos(2\pi s_{k,\ell}) = \frac{1}{2}\left(\cos \frac{ \pi k}{2} + 1\right)\left(\cos \frac{\pi \ell}{2} + 1\right) -1
\]
Evidently, each of these equations has a unique solution $s_{k,\ell}$ in the interval $[0,\frac{1}{2}]$; we only need to make sure that each $s_{k,\ell}$ is rational. Note that by symmetry
\[
s_{k,\ell} = s_{\ell,k} = s_{n-k,\ell}
\]
and also $s_{k,2} = -1$ for every $k$, so it suffices to consider the four pairs $(k,\ell)$ that are given in Table \ref{tab:4_4grid_periodic}. From this table we see that all solutions $s_{k,\ell}$ are indeed rational. Moreover, the smallest integer $\tau$ for which $\tau s_{k,\ell}$ is integer for all $(k,\ell)$ is 12, so the walk is periodic with period 12. (I.e.\ $\tau$ is the least common multiple of the denominators in the $s_{k,\ell}$ column in the table above.) To verify that there is no perfect state transfer, we write every eigenvalue $\cos(2\pi s_{k,\ell})$ in the form
\[
\cos\frac{p_{k,\ell}\pi}{q_{k,\ell}},
\]
where the pairs $(p_{k,\ell},q_{k,\ell})$ represent the $p(\theta)$ and $q(\theta)$ that we need to consider to check the condition (ii) of Theorem \ref{thm:PeakSTchar} are also given in Table \ref{tab:4_4grid_periodic}. We take $\tau = \lcm\{q_{k,\ell}\mid k,\ell \in \{0,1,2,3\}\} = 6$. By Lemma \ref{lem:cycle_eigenvectors}, the spectral idempotents of $B$ are
\[
\begin{split}
E_1 &= F_{4,0} \otimes F_{4,0};\\
E_{0} &= F_{4,0} \otimes F_{4,1} + F_{4,1} \otimes F_{4,0}; \\
E_{-\frac{1}{2}} &= F_{4,1} \otimes F_{4,1};\\
E_{-1} &= I_4 \otimes F_{4,2} + F_{4,2} \otimes I_4 - F_{4,2} \otimes F_{4,2}.
\end{split}
\]
By symmetry of the grid, it suffices to rule out perfect state transfer from the vertex $(0,0)$ to another vertex. Moreover, there is only one candidate vertex to which there could be perfect state transfer when $(0,0)$ is the initial vertex, and that is $(2,2)$. This is because perfect state transfer is a monogamous relation as shown in \cite[Theorem 4.5 (iii)]{GuoSch2024}, and thus, by symmetry, if there is perfect state transfer from $(0,0)$ to $(i,j)$, there must be perfect state transfer to $(\pm i, \pm j)$. Using the expressions for the idempotents above, as well as \eqref{eq:cycle_idempotent_entries}, we find that
\[
\begin{split}
E_1((0,0),(2,2)) &= \frac{1}{16} > 0; \\
E_0((0,0),(2,2)) &= -\frac{2}{4^2} - \frac{2}{4^2} = -\frac{1}{4} < 0; \\
E_{-\frac{1}{2}}((0,0),(2,2)) &= \frac{2}{4}\cdot \frac{2}{4} = \frac{1}{4} > 0; \\
E_{-1}((0,0),(2,2)) &= 0 + 0 - \frac{1}{4}\cdot \frac{1}{4} = -\frac{1}{16} < 0.
\end{split}
\]
We conclude that the positive and negative mutual eigenvalue supports of $(0,0)$ and $(2,2)$ are respectively $\Lambda^{1} = \{-\frac{1}{2},1\}$ and $\Lambda^{-1} = \{-1,0\}$. However, since $\tau \cdot \frac{p_{0,2}}{q_{0,2}}$ is even for the eigenvalue $\lambda_{0,2} = -1$, condition (ii) of Theorem \ref{thm:PeakSTchar} is not satisfied and there is no perfect state transfer from $(0,0)$ to $(2,2)$. (Recall that for the vertex-face walk, the $\gamma$ in Theorem \ref{thm:PeakSTchar} is always equal to $1$.)
\end{proof}

Below, we give the proof of Theorem \ref{thm:4n_grids_peakST}, which classifies peak state transfer in the vertex-face walk on $(4,n)$-grids. The statement specifies three cases, which will be dealt with separately.

\begin{proof}[Proof of Theorem \ref{thm:4n_grids_peakST}]
Using equation \eqref{eq:toroidal_B_eigs}, we determine that the projected transition matrix $B$ for the vertex-face walk on the $(4,n)$-toroidal grid has eigenvalues of the following forms:
\[
\begin{split}
\lambda_{0,\ell} &= \cos\frac{2\pi \ell}{n}; \\
\lambda_{\pm 1, \ell } &= \frac{1}{2}\cos\frac{2\pi \ell}{n} - \frac{1}{2}; \text{ and}\\
\lambda_{2,\ell} &= -1,
\end{split}
\]
where $\ell = 0,\ldots,n-1$. These forms are not mutually disjoint. For instance, it can happen that $\lambda_{0,\ell} = -1$ or $\lambda_{\pm 1,\ell} = -1$ (namely if $n$ is even and $\ell = \frac{n}{2}$). It can also happen that $\lambda_{0,\ell} = \lambda_{\pm 1,r}$ for some $\ell,r$. Indeed, suppose that
\[
\cos\frac{2\pi \ell}{n} = \frac{1}{2}\cos\frac{2\pi r}{n} - \frac{1}{2}.
\]
Then
\[
2\cos\frac{2\pi \ell}{n} - \cos\frac{2\pi r}{n} = -1
\]
By Lemma \ref{lem:conway_jones}(ii), it cannot be that $\cos\frac{2\pi \ell}{n}$ and $\cos\frac{2\pi r}{n}$ are both irrational, so the equation above can only hold if both cosines are rational. Hence the following pairs of $\ell,r$ satisfy the equation:
\[
\begin{array}{c|c|c}
     \ell & r & \text{corresponding eigenvalue} \\ \hline
     \pm \frac{n}{4} & 0 & 0 \\
     \pm \frac{n}{3} & \pm \frac{n}{4} & -\frac{1}{2}\\
     \frac{n}{2} & \frac{n}{2} & -1
\end{array}
\]
Now that we know the overlap in the different forms that the eigenvalues of $B$ can take, we can determine, using Lemma \ref{lem:cycle_eigenvectors}, the spectral idempotents of $B$. If we denote the spectral idempotent corresponding to eigenvalue $\lambda$ by $E_\lambda$, then $B$ always has the following spectral idempotents:
\begin{align}
\nonumber E_1 &= \frac{1}{4n}J_{4n};\\
\label{eq:cos_idempotent1} E_{\cos\frac{2\pi \ell}{n}} &= F_{4,0} \otimes F_{n,\ell} \quad \text{for all } \ell \in \left\{1,\ldots,\left\lfloor\frac{n-1}{2}\right\rfloor \right\} \mathbin{\big\backslash} \left\{\frac{n}{4},\frac{n}{3}\right\}; \text{ and} \\
\label{eq:cos_idempotent2} E_{\frac{1}{2}\cos\frac{2\pi r}{n} - \frac{1}{2}} &= F_{4,1} \otimes F_{n,r} \quad \text{for all } r \in \left\{1,\ldots,\left\lfloor\frac{n-1}{2}\right\rfloor\right\} \mathbin{\big\backslash} \left\{\frac{n}{4}\right\},
\end{align}
where $F_{m,k}$ is defined as in Lemma \ref{lem:cycle_eigenvectors}. Because of the overlap between eigenvalues that we have discussed above, we have left out some values of $\ell$ and $r$. We also take $E_1$ as a special case of $E_{\cos\frac{2\pi \ell}{n}}$ (with $\ell = 0$); note that $J_{4n}$ denotes the $4n \times 4n$ all-ones matrix.

The forms of the remaining spectral idempotents depend on $n$:
\begin{align*}
E_0 &= F_{4,1} \otimes F_{n,0} + \begin{cases}
F_{4,0} \otimes F_{n,\frac{n}{4}} &\text{if $4\mid n$}; \\
0 &\text{otherwise;}
\end{cases} \\
E_{-1} &= F_{4,2} \otimes I_n + \begin{cases}
F_{4,0} \otimes F_{n,\frac{n}{2}} + F_{4,1} \otimes F_{n,\frac{n}{2}} &\text{if $2\mid n$}; \\
0 &\text{otherwise;}
\end{cases}
\end{align*}
and finally, if $-\frac{1}{2}$ is an eigenvalue of $B$, it has the spectral idempotent
\[
E_{-\frac{1}{2}} = \begin{cases}
F_{4,0} \otimes F_{n,\frac{n}{3}} &\text{if $3\mid n$}; \\
0 &\text{otherwise;}
\end{cases} \quad + \quad 
\begin{cases}
F_{4,1} \otimes F_{n,\frac{n}{4}} &\text{if $4\mid n$}; \\
0 &\text{otherwise.}
\end{cases}
\]
We need these spectral idempotents in order to determine the positive and negative mutual eigenvalue supports, so that we can verify the conditions in Theorem \ref{thm:PeakSTchar}. We first make some observations in order to simplify the analysis. By symmetry of the grid, it suffices to show that there is peak state transfer from vertex $(0,0)$ to vertices $(1,x)$, where $x$ takes on the values $0,\frac{n}{2}$ and $\frac{n}{4}$ for the cases (a), (b) and (c) respectively, as specified in the statement of the theorem. We find that the $((0,0),(1,x))$-entry of $F_{4,1} \otimes F_{n,r}$ equals
\begin{equation}
\label{eq:FtimesF_entry_zero}
F_{4,1}(0,1) \cdot F_{n,r}(0,x) = 0\cdot F_{n,r}(0,x) = 0
\end{equation}
for any integer $r$, so that, considering \eqref{eq:cos_idempotent2}, we can see that no eigenvalue of the form $\frac{1}{2}\cos\frac{2\pi r}{n} - \frac{1}{2}$ is in the mutual eigenvalue support of $(0,0)$ and $(1,x)$ and we can disregard these eigenvalues in the analysis. Equation \eqref{eq:FtimesF_entry_zero} also implies  the eigenvalues $0$ and $-\frac{1}{2}$ can be treated together: their $((0,0),(1,x))$-entries depend only on the matrices $F_{4,0} \otimes F_{n,\frac{n}{4}}$ (if $4 \mid n$) and $F_{4,0} \otimes F_{n,\frac{n}{3}}$ (if $3 \mid n$) respectively, so we can treat the relevant entries in the same way as for the other spectral idempotents in \eqref{eq:cos_idempotent1}. We also conclude that every eigenvalue that is potentially in the mutual eigenvalue support of $(0,0)$ and $(1,x)$ can be written as a cosine of a rational multiple of $\pi$. For the eigenvalues $1$ and $-1$, we have that
\[
1 = \cos\frac{0 \cdot \pi}{1} \quad \text{and} \quad -1 = \cos\frac{1 \cdot \pi}{1}.
\]
Since both denominators in the fractions in the cosines are $1$, these eigenvalues both contribute a $1$ to the least common multiple $\tau$ as defined in Theorem \ref{thm:PeakSTchar}(ii). For $\ell \in \{1,\ldots,\left\lfloor\frac{n-1}{2}\right\rfloor\}$,
\[
\cos \frac{2\pi \ell}{n} = \cos\frac{p_\ell\pi}{q_\ell}
\]
where $p_\ell := \frac{2\ell}{\gcd(n,2\ell)}$ and $q_\ell := \frac{n}{\gcd(n,2\ell)}$ are the remaining coprime integers that we need to determine the time of peak state transfer and to verify condition (ii) of Theorem \ref{thm:PeakSTchar}.

To complete the proof, we will discuss the cases (a), (b) and (c) separately. For each case, we need to find the positive and negative mutual eigenvalue supports in order to determine the time of peak state transfer. Note that, as mentioned before, by Remark \ref{rem:gamma=1} we can always take $\gamma = 1$ in condition (ii) of Theorem \ref{thm:PeakSTchar}.
In case (a), $n$ is an odd integer and it suffices to show that there is peak state transfer from vertex $(0,0)$ to vertex $(1,0)$. For all $\ell \in \left\{1,\ldots,\frac{n-1}{2}\right\}$,
\[
E_{\cos\frac{2\pi \ell}{n}}((0,0),(1,0)) = F_{4,0}(0,1)\cdot F_{n,\ell}(0,0) = \frac{1}{4}\cdot \frac{2}{n} \cos(0) = \frac{1}{2n} > 0,
\]
so that $\cos\frac{2\pi \ell}{n}$ is in the positive mutual eigenvalue support of $(0,0)$ and $(1,0)$. For each such $\ell$, we have $\gcd(n,2\ell) = \gcd(n,\ell)$ as $n$ is odd, so that
\[
p_\ell = \frac{2\ell}{\gcd(n,\ell)} \quad \text{and} \quad q_\ell = \frac{n}{\gcd(n,\ell)}.
\]
Further, we find that
\[
E_{-1}((0,0),(1,0)) = F_{4,2}(0,1) \cdot I_n(0,0) = -\frac{1}{4} < 0
\]
so that $-1$ is in the negative mutual eigenvalue support of $(0,0)$ and $(1,0)$. For $n \geq 3$, we take
\[
\tau = \lcm\left\{q_\ell : \ell = 1,\ldots,\frac{n-1}{2}\right\} = n
\]
and for $n = 1$, the value of $\tau$ is trivially also $\tau = 1 = n$. For each $\ell \in \{1,\ldots,\frac{n-1}{2}\}$,
\[
\tau \cdot \frac{p_\ell}{q_\ell} = n\cdot \frac{2\ell}{n} = 2\ell
\]
is even, so that condition (ii)(a) of Theorem \ref{thm:PeakSTchar} holds for these eigenvalues. For the eigenvalue $-1$, we have
\[
\tau \cdot \frac{1}{1} = n
\]
is odd so that (ii)(b) holds instead. We conclude that there is peak state transfer from vertex $(0,0)$ to vertex $(1,0)$ at time $n$.

For case (b), where we let $n = 2m$, we will show that there is peak state transfer from vertex $(0,0)$ to vertex $(1,m)$ at time $m$. As before, $1$ is in the positive mutual eigenvalue support of these vertices and we already know that Theorem 3.5(ii)(a) is satisfied. For $\ell \in \{1,\ldots,m -1\}$:
\begin{align*}
E_{\cos\frac{2\pi \ell}{n}}((0,0),(1,m)) &= F_{4,0}(0,1)\cdot F_{n,\ell}(0,m)\\
&= \frac{1}{2n}\cos\frac{2\pi\ell(0 - m)}{n} \\
&= \frac{1}{2n} \cos\pi\ell \\
&= \begin{cases}
\frac{1}{2n} > 0 &\text{if $\ell$ is even;} \\
-\frac{1}{2n} < 0 &\text{if $\ell$ is odd.}
\end{cases}
\end{align*}
Hence $\cos\frac{2\pi\ell}{n}$ is in the positive (negative) mutual eigenvalue support of $(0,0)$ and $(1,m)$ if $\ell$ is even (odd). This time, since $n = 2m$, we can write
\[
p_\ell = \frac{\ell}{\gcd(m,\ell)} \quad \text{and} \quad q_\ell = \frac{m}{\gcd(m,\ell)}.
\]
Further, we find that
\begin{align*}
E_{-1}((0,0),(1,m)) &= F_{4,2}(0,1) \cdot I_n(0,m) \,+\, F_{4,0}(0,1) \cdot F_{n,m}(0,m) \,+\, F_{4,1}(0,1)\cdot F_{n,m}(0,m) \\
&= 0 \,+\, \frac{1}{2n}\cos\pi m \,+\, 0 \\
&= \begin{cases}
\frac{1}{2n} > 0 &\text{if $m$ is even;} \\
-\frac{1}{2n} < 0 &\text{if $m$ is odd.}
\end{cases}
\end{align*}
We obtain for $n \geq 4$:
\[
\tau = \lcm\left\{q_\ell : \ell = 1,\ldots,m-1\right\} = m
\]
and if $n=2$, we trivially have $\tau = 1 = m$ as well. For each $\ell \in \{1,\ldots,m -1\}$:
\[
\tau \cdot \frac{p_\ell}{q_\ell} = m \cdot \frac{2\ell}{n} = \ell,
\]
which is even if $\cos\frac{2\pi \ell}{n}$ is in the positive mutual eigenvalue support and odd if it is in the negative mutual eigenvalue support, as required. Finally,
\[
\tau \cdot \frac{1}{1} = m,
\]
which is even if $-1$ is in the positive mutual eigenvalue support and odd if it is in the negative eigenvalue support. We conclude that Theorem \ref{thm:PeakSTchar}(ii) is satisfied and there is peak state transfer from vertex $(0,0)$ to vertex $(0,m)$ at time $m$.

Finally, for case (c), assume that $n = 4k$; we want to show that there is peak state transfer from $(0,0)$ to $(1,k)$. Once more, condition (ii)(a) of Theorem \ref{thm:PeakSTchar} is satisfied for the eigenvalue $1$. For $\ell \in \{1,\ldots,2k-1\}$:
\[
E_{\cos\frac{2\pi \ell}{n}}((0,0),(1,k)) = \frac{1}{2n} \cos\frac{\pi\ell}{2} = \begin{cases}
0 &\text{if $\ell$ is odd;} \\
\frac{1}{2n} > 0 &\text{if $\ell \equiv 0 \mod 4$;} \\
-\frac{1}{2n} < 0 &\text{if $\ell \equiv 2 \mod 4$,}
\end{cases}
\]
so that $\cos\frac{2\pi\ell}{n}$ is in the mutual eigenvalue support of $(0,0)$ and $(1,m)$ only if $\ell$ is even and it is in the positive/negative part depending on if $\ell$ is divisible by $4$. Since $n = 4k$, we can write for all $\ell \in \{2,4,\ldots,2k-2\}$:
\[
p_\ell = \frac{\ell/2}{\gcd(k,\ell/2)} \quad \text{and} \quad q_\ell = \frac{k}{\gcd(k,\ell/2)}.
\]
Furthermore, we have
\begin{align*}
E_{-1}((0,0),(1,k)) &= F_{4,2}(0,1) \cdot I_n(0,k) \,+\, F_{4,0}(0,1) \cdot F_{n,2k}(0,k) \,+\, F_{4,1}(0,1)\cdot F_{n,2k}(0,k) \\
&= \frac{1}{2n}\cos\pi k \\
&= \begin{cases}
\frac{1}{2n} > 0 &\text{if $k$ is even;} \\
-\frac{1}{2n} < 0 &\text{if $k$ is odd.}
\end{cases}
\end{align*}
We obtain for all $n\geq 4$:
\[
\tau = \lcm\left\{q_\ell : \ell = 2,\ldots,2k-2\right\} = k.
\]
For each $\ell \in \{2,4,\ldots,2k-2\}$:
\[
\tau \cdot \frac{p_\ell}{q_\ell} = k\cdot \frac{2\ell}{n} = \frac{\ell}{2},
\]
which is even (odd) if $\cos\frac{2\pi\ell}{n}$ is in the positive (negative) eigenvalue support of $(0,0)$ and $(1,k)$, so that Theorem 3.5(ii) is satisfied for these eigenvalues. Finally,
\[
\tau \cdot \frac{1}{1} = k
\]
so that Theorem 3.5(ii) is also satisfied for the eigenvalue $-1$. We conclude that there is peak state transfer from vertex $(0,0)$ to vertex $(1,k)$ at time $k$.
\end{proof}

\end{document}